\theoremstyle{plain} 
\newtheorem{theorem}{\indent\sc Theorem}[subsection]
\newtheorem{lemma}[theorem]{\indent\sc Lemma}
\newtheorem{corollary}[theorem]{\indent\sc Corollary}
\newtheorem{proposition}[theorem]{\indent\sc Proposition}
\newtheorem{claim}[theorem]{\indent\sc Claim}
\theoremstyle{definition} 
\newtheorem{definition}[theorem]{\indent\sc Definition}
\newtheorem{condition}[theorem]{\indent\sc Condition}
\newtheorem{example}[theorem]{\indent\sc Example}
\begin{document}
\pagestyle{plain}
\thispagestyle{plain}

\title[On metrics with minimal singularities of line bundles]
{On metrics with minimal singularities of line bundles whose stable base loci admit holomorphic tubular neighborhoods}

\author[Genki HOSONO and Takayuki KOIKE]{Genki HOSONO$^{1}$ and Takayuki KOIKE$^{2}$}
\address{ 
$^{1}$ Graduate School of Mathematical Sciences\\
The University of Tokyo\\
3-8-1 Komaba, Meguro-ku\\
Tokyo, 153-8914\\
Japan
}
\email{genkih@ms.u-tokyo.ac.jp}
\address{
$^{2}$ Graduate School of Science, Osaka City University\\
3-3-138, Sugimoto, Sumiyoshi-ku Osaka, 558-8585\\
Japan
}
\email{tkoike@sci.osaka-cu.ac.jp}
\subjclass[2010]{32J25, 14C20}

\begin{abstract}
We investigate the minimal singularities of metrics on a big line bundle $L$ over a projective manifold when the stable base locus $Y$ of $L$ is a submanifold of codimension $r\geq 1$. 
Under some assumptions on the normal bundle and a neighborhood of $Y$, we give a explicit description of the minimal singularity of  metrics on $L$.
We apply this result to study a higher (co-)dimensional analogue of Zariski's example, in which the line bundle $L$ is not semi-ample, however it is nef and big. 
\end{abstract}


\maketitle


\section{Introduction}

The purpose of this paper is to investigate {\it metrics with minimal singularities} on a big line bundle $L$ on a projective manifold $X$. 
Metrics with minimal singularities have been introduced in \cite[Definition 1.4]{DPS01} as a weak analytic analogue of the so-called Zariski decomposition. 
There exists a metric with minimal singularities  uniquely up to certain equivalence of singularities when $L$ is pseudo-effective \cite[Theorem 1.5]{DPS01}. 
Indeed, the {\it equilibrium metric} $h_e$ of any $C^\infty$ Hermitian metric $h$ on $L$ has minimal singularities (see Example \ref{ex:equilibrium}).

On a higher-dimensional variety, a line bundle does not necessarily admit the Zariski decomposition. 
Nakayama constructed an example of a line bundle which admits no Zariski decomposition even after any modification {\cite[IV, \S 2.6]{N}}.
Nakayama's example is constructed as the relative tautological bundle on certain projective {space} bundle over an abelian variety.
Boucksom \cite{B} posed a decomposition called {\it divisorial Zariski decomposition}, in which the negative part of a big line bundle $L$ is identified with the divisorial (i.e.\ one-codimensional) part of the singularities of a metric with minimal singularities on $L$. 
From this point of view, it is important for a study of the Zariski decomposition to investigate the higher-codimensional part of the singularities of metrics with minimal singularities in detail.
In \cite{K1}, the second author explicitly described the metrics with minimal singularities for Nakayama's example we mentioned above. 

We also investigate the case where the line bundle $L$ is nef (and big) and thus $L$ has no negative part in the sense of Zariski decompositions.
In this case, our main interest is in the semi-positivity of the line bundle, i.e.\ whether $L$ admits a $C^\infty$ metric with semi-positive curvature or not. 
In \cite{K2}, the second author studied the metrics with minimal singularities on a line bundle called Zariski's example, which is known to be nef and big, but not semi-ample.
As a result, it was shown that Zariski's example admits a $C^\infty$ Hermitian metric with semi-positive curvature.

In this paper, we investigate the metrics of $L$ with minimal singularities for more general cases than both \cite{K1} and \cite{K2}.
Our main result has the following application:

\begin{theorem}\label{cor:higher_Zariski}
Take two general quadric surfaces $Q_1$ and $Q_2$ in $\mathbb{P}^3$ and fix general $N$ points $p_1, p_2, \dots, p_N$ in $Q_1\cap Q_2$ ($N\geq 12$). 
Denote by $\pi\colon X:={\rm Bl}_{\{p_1, p_2, \dots, p_N\}}\mathbb{P}^3\to \mathbb{P}^3$ the blow-up of 
$\mathbb{P}^3$ at these $N$ points, 
and by $D_1$ and $D_2$ the the strict transform of $Q_1$ and $Q_2$, respectively. 
Define a line bundle $L$ by $L:=\pi^*\mathcal{O}_{\mathbb{P}^3}(1)\otimes \mathcal{O}_X(D_1)$.
Then, the local weight function $\varphi_{{\rm min}, L}$ of { a metric with minimal singularities $h_{{\rm min}, L}$} of $L$ { (i.e. $\varphi_{{\rm min}, L}$ is a locally defined function such that $h_{{\rm min}, L}=e^{-\varphi_{{\rm min}, L}}$)} is written as 
\[
\varphi_{{\rm min}, L}(z, y)=\frac{N-12}{N-8}\cdot \log (|z_1|^2+|z_2|^2) +O(1) 
\]
on a neighborhood of { every}  point of $Y:=D_1\cap D_2$, where $y$ is a coordinate of $Y$ and $z=(z_1, z_2)$ is a system of local defining functions of $Y$.
We have that $\varphi_{{\rm min}, L}$ is locally bounded on $X \setminus Y$.
\end{theorem}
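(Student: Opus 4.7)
The plan is to apply the paper's main theorem (the explicit description of $\varphi_{\min,L}$ under the normal-bundle and tubular-neighborhood hypotheses) to this Zariski-type example, with the coefficient $\tfrac{N-12}{N-8}$ to be extracted from the resulting formula by a direct numerical computation.

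First I would identify $Y=D_1\cap D_2$ as the stable base locus of $L$. Writing $L=3\pi^*H-\sum_{i=1}^N E_i$, where $E_i$ is the exceptional divisor over $p_i$, I compute the degree of $L$ on the elliptic curve $Y$ (the strict transform of the smooth elliptic quartic $Q_1\cap Q_2$) as
\[
\deg(L|_Y) \;=\; 4 + (8-N) \;=\; 12-N,
\]
which is strictly negative for $N>12$; hence every section of $mL$ must vanish on $Y$, giving $Y\subseteq\mathbb{B}(L)$. The reverse inclusion requires producing, for some $m$, enough sections of $mL$ to have no base component off $Y$: combining pullbacks of sections of $\mathcal{O}_{\mathbb P^3}(m)$ with sections of $\mathcal O_X(mD_1)$ (and using the generality of the $p_i$ to rule out accidental common zeros) should suffice.

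Next, I would verify the hypotheses of the main theorem on $N_{Y/X}$ and on the existence of a holomorphic tubular neighborhood. By transversality of $D_1$ and $D_2$ along $Y$, the conormal sequence splits and
\[
N_{Y/X} \;\cong\; \mathcal O_X(D_1)|_Y \,\oplus\, \mathcal O_X(D_2)|_Y,
\]
with each summand of degree $8-N<0$ on the elliptic curve $Y$. A natural way to obtain a holomorphic tubular neighborhood is to use the complete-intersection structure of $Q_1\cap Q_2\subset\mathbb P^3$, which furnishes canonical normal coordinates $(z_1,z_2)$ near $Q_1\cap Q_2$, and then to observe that the blow-ups at the generic points $p_i$ do not affect the biholomorphism type near generic points of $Y$; the Ueda/Diophantine conditions that the main theorem presumably requires are the place where the genericity of the $p_i$ is essentially used.

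Finally, I would plug the computed invariants into the main theorem. The coefficient in the theorem should be characterized by the condition that twisting $L$ by a suitable real multiple of $D_1$ produces a line bundle of degree zero on $Y$: solving
\[
(12-N) - c\,(8-N) \;=\; 0 \qquad \text{gives} \qquad c \;=\; \frac{N-12}{N-8},
\]
which is exactly the asserted coefficient of $\log(|z_1|^2+|z_2|^2)$. The form $\log(|z_1|^2+|z_2|^2)$ (rather than $\log|z_1z_2|^2$) reflects the genuine codimension-two singularity structure predicted by the main theorem. Local boundedness of $\varphi_{\min,L}$ on $X\setminus Y$ falls out of the same description, since away from $Y$ one can construct candidate metrics with locally bounded weights (e.g.\ by combining a smooth Hermitian metric on $\pi^*\mathcal O(1)$ with $c$ times a smooth weight on $\mathcal O_X(D_1)$ and correcting near $Y$). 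The main obstacle, as indicated, is Step 2: the verification of the holomorphic tubular neighborhood and the required normal-bundle conditions over the elliptic curve, which is where the genericity hypothesis on $p_1,\dots,p_N$ does the real work.
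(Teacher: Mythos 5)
Your overall strategy is the right one --- reduce to the paper's main theorem, compute $\deg(L|_Y)=12-N$ and $\deg(N_\lambda)=8-N$, split $N_{Y/X}=\mathcal O_X(D_1)|_Y\oplus\mathcal O_X(D_2)|_Y$ by transversality, and read off the coefficient from the pseudo-effective threshold $(12-N)-c\,(8-N)=0$; this is exactly how the paper extracts $\tfrac{N-12}{N-8}$ from the polytope $\Box_L$ (whose computation here gives $\Box_L=\{\alpha\in\mathbb R^2_{\ge 0}:\tfrac{N-12}{N-8}\le|\alpha|\le 1\}$, and the maximum of $\sum\alpha_\lambda\log|z_\lambda|^2$ over it produces $\log(|z_1|^2+|z_2|^2)$ up to $O(1)$). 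However, the step you yourself identify as the main obstacle --- the holomorphic tubular neighborhood --- is resolved incorrectly in your proposal. You cannot obtain it from the complete-intersection structure of $C=Q_1\cap Q_2$ in $\mathbb P^3$: the normal bundle of $C$ in $\mathbb P^3$ is $\mathcal O(2)|_C^{\oplus 2}$, which is \emph{positive}, whereas after blowing up the $N\ge 12$ points (all of which lie \emph{on} $C$) the normal bundle of $Y$ in $X$ becomes negative of degree $8-N$; so the linearized neighborhood of $Y$ in $X$ is a genuinely different complex manifold from any neighborhood of $C$ in $\mathbb P^3$, and the blow-up modifies the germ of $X$ along $Y$ at every one of the $N$ points $Y$ meets the exceptional divisors. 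A tubular neighborhood is a global structure along all of $Y$, so ``near generic points of $Y$'' is not enough. Moreover, no Ueda/Diophantine condition is relevant here --- those pertain to flat or torsion normal bundles. The paper instead invokes a higher-codimensional version of Grauert's linearization theorem (Proposition \ref{prop:grauert_higher_codim} via Lemma \ref{lem:a_suff_cond_for_hol_tub_nbhd}): since $Y$ is an elliptic curve, $T_Y$ and $K_Y$ are trivial, $N_1\cong N_2$ are negative (both summands are $\mathcal O_X(2H-E)|_Y$, hence isomorphic), and the required cohomology vanishings on ${\bf P}(N_{Y/X})\cong Y\times{\bf P}^1$ hold by Kodaira/Nakano vanishing; the genericity of the $p_i$ plays no role in this step.

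Two smaller gaps: (a) for $N=12$ your argument that $Y\subseteq{\bf B}(L)$ via $\deg(L|_Y)<0$ does not apply; one needs the generality of the $p_i$ to ensure $L|_Y$ is a non-torsion element of ${\rm Pic}^0(Y)$, whence $H^0(Y,L^m|_Y)=0$ for all $m\ge1$ and $Y\subseteq{\bf B}(L)$ (this is also where the paper shows $L$ is nef and big but not semi-ample, and it handles boundedness of $\varphi_{{\rm min},L}$ in this case via Corollary \ref{cor:restr_min} rather than by the degree-zero threshold alone). (b) You should also record that the remaining positivity hypotheses of the main theorem hold: $\deg(L|_Y\otimes N_\lambda^{-1})=(12-N)-(8-N)=4>0$ and $\deg(K_Y^{-1}\otimes N_\lambda^{-1})=N-8>0$, both immediate but necessary to invoke Theorem \ref{cor:ab}.
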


When $N=12$, the line bundle $L$ in this theorem is nef and big, but not semi-ample. Hence it can be regarded as a higher-dimensional analogue of Zariski's example. In this case, we will show that $L$ admits a $C^\infty$ Hermitian metric with semi-positive curvature (see \S \ref{section:zariski} for detail), which can be regarded as a two-codimensional analogue of \cite[Theorem 1.1]{K2}.

In what follows, $L$ denotes a big line bundle on a projective manifold $X$.
We study the metric with minimal singularities when $(X,L)$ satisfies the following condition.

\begin{condition}\label{cond:main}
(i) The stable base locus $Y={\bf B}(L)$ of $L$ is a smooth (i.e. non-singular) compact subvariety of codimension $r\geq 1$, \\
(ii) the normal bundle $N_{Y/X}$ of $Y$ admits a direct sum decomposition $N_{Y/X}=N_1\oplus N_2\oplus\cdots\oplus N_r$ into $r$ negative line bundles. 
\end{condition}
Nakayama's example satisfies Condition \ref{cond:main}.
For the pair $(X, L)$ with Condition \ref{cond:main}, we define the following convex set:
\[
\Box_L=\left\{\alpha=(\alpha_1, \alpha_2, \dots, \alpha_r)\in \mathbb{R}^r_{\geq 0} \left|
\begin{matrix}
 |\alpha| \leq 1,\text{ and} \\
 c_1(L|_Y)+\textstyle\sum_{\lambda=1}^r\alpha_\lambda c_1(N_\lambda^{-1})
 \text{ { is}  pseudo-effective}
\end{matrix}
\right.\right\}, 
\]
where $|\alpha|:=\textstyle\sum_{\lambda=1}^r\alpha_\lambda$. 
In \cite{K1}, a metric with minimal singularities on Nakayama's example was described explicitly by using $\Box_L$ on a neighborhood of $Y$.
It is easily observed that a metric with minimal singularities is locally bounded on the complement $X\setminus Y$ of $Y$ (see Example \ref{ex:bergman_type_sing_metric}).
Hence our interest is in the behavior of metrics with minimal singularities near $Y$. 
We always take a system of local defining functions $z=(z_1, z_2, \dots, z_r)$ of $Y$ so that, for each $\lambda$, the subbundle of $N_{Y/X}^{-1}$ generated by $dz_\lambda$ corresponds to the direct component $N_\lambda^{-1}$.

Our main result is stated as follows: 

\begin{theorem}\label{thm:main}
Let $X$ be a projective manifold, $L$ be a big line bundle on $X$, and $Y={\bf B}(L)$ be the stable base locus of $L$. 
Assume that $Y$ admits a holomorphic tubular neighborhood (see below) and $(X,L,Y)$ satisfies Condition \ref{cond:main}.
Assume also that $L|_Y\otimes N_\lambda^{-1}$ and $K_Y^{-1}\otimes N_\lambda^{-1}$ are positive for every $\lambda=1, 2, \dots, r$. 
Take $C^\infty$ Hermitian metrics $h_{L|_Y}$ on $L|_Y$ and $h_{N_\lambda}$ on $N_\lambda$ satisfying $\Theta_{h_{L|_Y}\otimes h_{N_\lambda}^{-1}}>0$ for every $\lambda$. 
Then the local weight function $\varphi_{{\rm min}, L}$ of { a metric with minimal singularities} $h_{{\rm min}, L}$  is written as 
\[
\varphi_{{\rm min}, L}(z, y)=\log \max_{\alpha \in \Box_L} \left(\prod_{\lambda=1}^r |z_\lambda|^{2\alpha_\lambda}\right)\cdot e^{(\varphi_\alpha)_e (y)}+ O(1)
\]
on a neighborhood of any given point of $Y$, where 
{we are formally regarding $0^0$ as $1$, }
$y$ is a coordinate of $Y$, $z=(z_1, z_2, \dots, z_r)$ is a system of local defining functions of $Y$ as  above, and $(\varphi_\alpha)_e$ is the local weight function of the equilibrium metric of $h_{L|_Y}\otimes h_{N_1}^{-\alpha_1}\otimes h_{N_2}^{-\alpha_2}\cdots\otimes h_{N_r}^{-\alpha_r}$ (see \S 2 for the notion of the ``metric'' $h_{L|_Y}\otimes h_{N_1}^{-\alpha_1}\otimes h_{N_2}^{-\alpha_2}\cdots\otimes h_{N_r}^{-\alpha_r}$ for real $\alpha_\lambda$'s).
\end{theorem}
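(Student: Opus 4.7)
I shall prove the equality in the two directions
\[
\Psi\leq\varphi_{\min,L}+O(1)\qquad\text{and}\qquad\varphi_{\min,L}\leq\Psi+O(1),
\]
where $\Psi(z,y):=\max_{\alpha\in\Box_L}\bigl(\sum_\lambda 2\alpha_\lambda\log|z_\lambda|+(\varphi_\alpha)_e(y)\bigr)$ is the claimed right-hand side. Fix the holomorphic tubular projection $\pi\colon V\to Y$ on a neighborhood $V$ of $Y$ given by the hypothesis, and use the splitting $N_{Y/X}=\bigoplus_\lambda N_\lambda$ together with the distinguished $z_\lambda$ to identify $L|_V$ with $\pi^*(L|_Y)$, interpreting each $z_\lambda$ as a trivializing fibre coordinate of $\pi^*N_\lambda$.

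\emph{Construction side ($\Psi\leq\varphi_{\min,L}+O(1)$).} For each $\alpha\in\Box_L$, the pseudo-effectivity of $c_1(L|_Y)+\sum_\lambda\alpha_\lambda c_1(N_\lambda^{-1})$, read via the $\mathbb{R}$-line bundle formalism of \S 2 with the chosen smooth reference metrics, produces the equilibrium weight $(\varphi_\alpha)_e$ on $Y$ associated with $h_{L|_Y}\otimes\bigotimes h_{N_\lambda}^{-\alpha_\lambda}$. The function
\[
\psi_\alpha(z,y):=\sum_{\lambda=1}^r 2\alpha_\lambda\log|z_\lambda|+\pi^*(\varphi_\alpha)_e(y)
\]
is then psh on $V$, and under the chosen trivialization is the local weight of a singular psh metric of $L$ on $V$. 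Gluing to a background psh singular metric of $L$ on $X$ (which exists since $L$ is big) by a standard $\max$/cutoff argument yields a globally defined psh metric of $L$ whose weight agrees with $\psi_\alpha$ up to $O(1)$ near $Y$. The defining property of $\varphi_{\min,L}$ then gives $\psi_\alpha\leq\varphi_{\min,L}+C_\alpha$; a compactness argument on $\Box_L$, exploiting continuity of $(\varphi_\alpha)_e$ in $\alpha$, produces a uniform constant $C$, and passing to the upper semicontinuous regularization of $\sup_\alpha\psi_\alpha$ yields $\Psi\leq\varphi_{\min,L}+O(1)$.

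\emph{Domination side ($\varphi_{\min,L}\leq\Psi+O(1)$).} Here I would use the algebraic description of $\varphi_{\min,L}$: up to $O(1)$, it coincides with $\sup_k\frac{1}{k}\log\sum_i|s_i^{(k)}|^2$ where $\{s_i^{(k)}\}$ runs over an $L^2$-orthonormal basis of $H^0(X,kL)$ (for a fixed smooth reference metric), a fact that applies since $L$ is big. For any such section $s$, the tubular neighborhood provides a Taylor expansion $s=\sum_\beta z^\beta\,\pi^*s_\beta$ with $s_\beta\in H^0(Y,\,kL|_Y\otimes N_1^{-\beta_1}\otimes\cdots\otimes N_r^{-\beta_r})$. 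Non-vanishing of $s_\beta$ forces $\alpha:=\beta/k$ into the pseudo-effective cone appearing in the definition of $\Box_L$, and the normalization $|\alpha|\leq 1$ should follow from comparing the global $L^2$ norm of $s$ on $X$ with those of the Taylor coefficients on $Y$ by iterated Ohsawa–Takegoshi $L^2$ extension, one normal coordinate at a time; each such step is precisely enabled by the positivity of $K_Y^{-1}\otimes N_\lambda^{-1}$ (for the curvature hypothesis of Ohsawa–Takegoshi) and of $L|_Y\otimes N_\lambda^{-1}$ (so that the intermediate twisted line bundles remain positive). Combining the resulting pointwise bound
\[
\tfrac{1}{k}\log|s|^2\leq\max_\beta\Bigl(\tfrac{2}{k}\langle\beta,\log|z|\rangle+\tfrac{1}{k}\log|s_\beta|^2\Bigr)+O\!\bigl(\tfrac{\log k}{k}\bigr)
\]
with the algebraic description $\frac{1}{k}\log|s_\beta|^2\leq(\varphi_\alpha)_e+O(1)$ on $Y$ (uniformly in $\alpha\in\Box_L$ for large $k$), and using that the lattice points $\beta/k$ become dense in $\Box_L$ as $k\to\infty$, gives the claimed inequality.

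\emph{Main obstacle.} The principal difficulty lies in the domination side: making the Taylor decomposition quantitative in $L^2$ and extracting both the pseudo-effectivity defining $\Box_L$ and the bound $|\alpha|\leq 1$ from the mere existence of global sections of $kL$ on $X$, with constants uniform in $k$. This is exactly where the iterated Ohsawa–Takegoshi extension enters and where the positivity hypotheses $L|_Y\otimes N_\lambda^{-1}>0$ and $K_Y^{-1}\otimes N_\lambda^{-1}>0$ become indispensable.
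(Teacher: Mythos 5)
Your overall architecture --- a construction side gluing the psh competitors $\psi_\alpha$ into global metrics to get $\Psi\leq\varphi_{\min,L}+O(1)$, and a domination side bounding a Bergman-type description of $\varphi_{\min,L}$ by $\Psi$ --- matches the paper's strategy in outline, but two steps are genuinely incomplete. First, you identify $L|_V$ with $\pi^*(L|_Y)$ ``via the splitting and the distinguished $z_\lambda$''. A holomorphic tubular neighborhood only identifies $V$ with a neighborhood of the zero section of $N_{Y/X}$; it does not identify the line bundle $L|_V$ with the pullback of $L|_Y$. This isomorphism is Proposition \ref{prop:L_vs_Ltilde} of the paper and requires real work: a Rossi-type injectivity statement for $H^1(V,\mathcal{O}_V)\to H^1(V,\mathcal{O}_V/I_Y^n)$ together with the vanishing $H^1(Y,S^\ell N_{Y/X}^*)=0$, which is exactly where the hypothesis $K_Y^{-1}\otimes N_\lambda^{-1}>0$ enters (via Kodaira vanishing). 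Your writeup assigns that hypothesis to an Ohsawa--Takegoshi step instead, so the place where it is actually indispensable is left unjustified, and without it the $\psi_\alpha$ are not even weights of metrics on $L$ over $V$.

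Second, the domination side as described would not close. The key quantitative input is a \emph{trace} estimate: the $L^2$ norms of the Taylor coefficients $s_\beta$ on $Y$ must be bounded by the global $L^2$ norm of $s$ on $X$, uniformly in $k$ and $\beta$. Ohsawa--Takegoshi is an \emph{extension} theorem and goes in the opposite direction; it cannot deliver this bound. Moreover the constraint $|\beta|\leq k$ is not available for the local expansion on $V$: a section of $kL$ restricted to $V$ has an infinite Taylor expansion along $Y$, and since the $N_\lambda$ are negative, $kL|_Y\otimes N_1^{-\beta_1}\otimes\cdots\otimes N_r^{-\beta_r}$ can perfectly well be effective for $|\beta|>k$, so infinitely many terms occur and your pointwise maximum over $\beta$ with a uniform $O(\log k/k)$ error is unsubstantiated. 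The paper circumvents both difficulties by first transferring the whole problem to the compact model $\widetilde X=\mathbb{P}(\mathbb{I}_Y\oplus N_{Y/X}^*)$ (Proposition \ref{prop:min_sings_comp}, via the maximum construction), where $H^0(\widetilde X,\widetilde L^m)$ decomposes as a \emph{finite} direct sum $\bigoplus_{|\ell|\leq m}H^0(Y,M^\ell)$ that is orthogonal for a suitably chosen volume form; the needed norm comparison is then an explicit fiber integral (Lemma \ref{lem:integration}, a Gamma-function computation) combined with H\"older's inequality and the uniform Bergman-versus-equilibrium comparison (\ref{eqn:V_compare}). Unless you supply a substitute for this quantitative step, the inequality $\varphi_{\min,L}\leq\Psi+O(1)$ remains unproved.
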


A complex submanifold $Y \subset X$ is said to have a {\it holomorphic tubular neighborhood} if there exist a neighborhood $V$ of $Y$ in $X$, a neighborhood $\widetilde{V}$ of the zero section in $N_{Y/X}$ and a biholomorphism $i\colon V\to \widetilde{V}$ such that $i|_Y$ coincides with the natural isomorphism. 
Note that the description of the singularity of $\varphi_{{\rm min}, L}$ as in Theorem \ref{thm:main} does not depend on the choice of the coordinates (up to $O(1)$).
This theorem is a generalization of the main result of \cite{K1}. Moreover it is also a generalization of \cite{K2} in higher codimensional cases. 
In this theorem, Condition \ref{cond:main} and the condition that $Y$ admits a holomorphic tubular neighborhood are essential and can not be dropped (see \S \ref{section:BEGZ}). 

When $Y$ is an abelian variety (as in Nakayama's example), we have a sufficient condition for the existence of a holomorphic tubular neighborhood of $Y$ by Grauert's theory on a neighborhood of an exceptional subvariety \cite{G} (see \S \ref{section:tubular}).
As a result, we have the following theorem:

\begin{theorem}\label{cor:ab}
Let $X$ be a projective manifold, let $L$ be a big line bundle on $X$, and let $Y={\bf B}(L)$ be the stable base locus of $L$. 
Assume Condition \ref{cond:main}. 
Assume also that $Y$ is an abelian variery, $L|_Y\otimes N_\lambda^{-1}$ is positive for every $\lambda=1, 2, \dots, r$, and that $N_\lambda\cong N_\mu$ for every $\lambda$ and $\mu$. 
Then the local weight function $\varphi_{{\rm min}, L}$ of {a metric with minimal singularities $h_{{\rm min}, L}$} is written as 
\[
\varphi_{{\rm min}, L}(z, y)=\log \max_{\alpha \in \Box_L}\prod_{\lambda=1}^r |z_\lambda|^{2\alpha_\lambda} +O(1)
\]
on a neighborhood of any given point of $Y$, where $y$ is a coordinate of $Y$ and $z=(z_1, z_2, \dots, z_r)$ is a system of local defining functions of $Y$ as  above. 
\end{theorem}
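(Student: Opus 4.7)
The plan is to derive Theorem~\ref{cor:ab} as a direct corollary of Theorem~\ref{thm:main}. I need to (a) verify the hypotheses of Theorem~\ref{thm:main} and (b) show that under the abelian-variety assumption the equilibrium-weight factors $(\varphi_\alpha)_e(y)$ appearing in its conclusion are locally $O(1)$, uniformly in $\alpha\in\Box_L$. Substituting this into the formula of Theorem~\ref{thm:main} then absorbs these factors into the error term, yielding exactly the stated expression.

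For (a), Condition~\ref{cond:main} is assumed outright, and the positivity of $L|_Y\otimes N_\lambda^{-1}$ is in the hypothesis. The positivity of $K_Y^{-1}\otimes N_\lambda^{-1}$ is automatic: since $Y$ is an abelian variety, $K_Y$ is trivial, so $K_Y^{-1}\otimes N_\lambda^{-1}\cong N_\lambda^{-1}$, which is positive by Condition~\ref{cond:main}(ii). The remaining requirement is that $Y$ admit a holomorphic tubular neighborhood. Here the assumption $N_\lambda\cong N_\mu$ enters essentially: it reduces $N_{Y/X}$ to $N^{\oplus r}$ for a single negative line bundle $N$ on $Y$, and Grauert's theory on neighborhoods of exceptional subvarieties, as treated in \S\ref{section:tubular}, then furnishes the desired biholomorphism between a neighborhood of $Y$ in $X$ and a neighborhood of the zero section in $N_{Y/X}$.

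For (b), I would choose $h_{L|_Y}$ and $h_{N_\lambda}$ to be translation-invariant Hermitian metrics on the abelian variety $Y$, so that their curvatures are constant $(1,1)$-forms. For each $\alpha\in\Box_L$, the curvature of $h_\alpha := h_{L|_Y}\otimes h_{N_1}^{-\alpha_1}\otimes\cdots\otimes h_{N_r}^{-\alpha_r}$ is then a constant form representing the class $c_1(L|_Y)+\sum_{\lambda}\alpha_\lambda c_1(N_\lambda^{-1})$. By definition of $\Box_L$ this class is pseudo-effective, hence nef on $Y$; and the unique translation-invariant representative of a nef class on an abelian variety (its harmonic representative) is semi-positive. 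Consequently the local weight $\varphi_\alpha$ is smooth and plurisubharmonic, so $h_\alpha$ is already a metric with minimal singularities, and comparison with the equilibrium yields $(\varphi_\alpha)_e=\varphi_\alpha+O(1)$. Since $\varphi_\alpha$ is smooth in $y$ and depends linearly (hence continuously) on $\alpha\in\Box_L$, on any compact local chart of $Y$ this $O(1)$ is uniform as $\alpha$ ranges over the compact set $\Box_L$; thus $e^{(\varphi_\alpha)_e(y)}$ is uniformly bounded away from $0$ and $\infty$. Plugging this into the formula of Theorem~\ref{thm:main} turns $\log\max_\alpha\bigl(\prod_\lambda|z_\lambda|^{2\alpha_\lambda}\bigr)\cdot e^{(\varphi_\alpha)_e(y)}$ into $\log\max_\alpha\prod_\lambda|z_\lambda|^{2\alpha_\lambda}+O(1)$, which is the claim.

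I expect the main obstacle to be the Grauert-type construction of a genuine holomorphic (not merely formal) tubular neighborhood: the formal neighborhood is linearizable by a standard cohomology-vanishing argument using negativity of $N$ and triviality of $K_Y$, but promoting the formal isomorphism to a holomorphic one in a neighborhood requires Grauert's convergence machinery, which is precisely the content of \S\ref{section:tubular}. Modulo that input, the proof is essentially a bookkeeping reduction to Theorem~\ref{thm:main} together with the elementary fact that nef classes on abelian varieties admit semi-positive constant curvature representatives.
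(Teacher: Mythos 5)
Your proposal is correct and follows essentially the same route as the paper: existence of the holomorphic tubular neighborhood via the Grauert-type criterion of \S\ref{section:tubular} (Lemma \ref{lem:a_suff_cond_for_hol_tub_nbhd}, whose extra hypotheses are automatic since $T_Y$ is trivial), verification of the hypotheses of Theorem \ref{thm:main} using $K_Y\cong\mathcal{O}_Y$, and then absorption of the factors $e^{(\varphi_\alpha)_e(y)}$ into $O(1)$ by choosing translation-invariant metrics so that $(\varphi_\alpha)_e=\varphi_\alpha$ depends continuously on $(\alpha,y)$. This last step is exactly the paper's appeal to the invariant-metric construction of \cite[\S 2.2]{K1}, and your justification (pseudo-effective classes on an abelian variety have semi-positive invariant representatives) is the correct reason it works.
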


When $L|_Y$ is pseudo-effective, it is natural to ask whether $(h_{{\rm min}, L})|_Y$ is a metric with minimal singularities on $L|_Y$. 
For $(X, L, Y)$ in Theorem \ref{thm:main}, it follows by definition that the convex set $\Box_L$ includes the origin $0$ if $L|_Y$ is pseudo-effective. 
In this case, it is directly deduced from Theorem \ref{thm:main} that $h_{{\rm min}, L}|_Y\leq (h_{L|_Y})_e \cdot e^{O(1)}$, which means that $h_{{\rm min}, L}|_Y$ has minimal singularities. 
Therefore we have the following: 

\begin{corollary}\label{cor:restr_min}
Let $X, L$, and $Y$ be those in Theorem \ref{thm:main}. 
Assume that $L|_Y$ is pseudo-effective. 
Then $h_{{\rm min}, L}|_Y$ is a singular Hermitian metric of $L|_Y$ with minimal singularities. 
\end{corollary}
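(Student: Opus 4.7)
The plan is to specialize the asymptotic formula of Theorem \ref{thm:main} to the submanifold $Y$ and observe that, under the pseudo-effectivity hypothesis, the maximum over $\Box_L$ is attained at $\alpha = 0$.

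First I verify that $0 \in \Box_L$: this is immediate, since $|0| = 0 \leq 1$ and $c_1(L|_Y) + \sum_{\lambda} 0 \cdot c_1(N_\lambda^{-1}) = c_1(L|_Y)$ is pseudo-effective by assumption. I then fix a point of $Y$, choose adapted local coordinates $(z,y)$ so that Theorem \ref{thm:main} applies, and evaluate the formula
\[
\varphi_{{\rm min}, L}(z, y) = \log \max_{\alpha \in \Box_L} \left( \prod_{\lambda=1}^{r} |z_\lambda|^{2\alpha_\lambda} \right) e^{(\varphi_\alpha)_e(y)} + O(1)
\]
at $z = 0$. With the convention $0^0 = 1$, the $\alpha = 0$ term contributes $e^{(\varphi_0)_e(y)}$, while every $\alpha \in \Box_L$ with some $\alpha_\lambda > 0$ contributes $0$ to the maximum. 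Hence $\varphi_{{\rm min}, L}|_Y(y) = (\varphi_0)_e(y) + O(1)$ on a neighborhood of the chosen point of $Y$.

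By Example \ref{ex:equilibrium}, $(\varphi_0)_e$ is the local weight of the equilibrium metric $(h_{L|_Y})_e$, which has minimal singularities on $L|_Y$. Covering the compact manifold $Y$ by finitely many such coordinate charts turns the local $O(1)$ bound into a uniform comparability $h_{{\rm min}, L}|_Y \asymp (h_{L|_Y})_e$ on all of $Y$, and this forces $h_{{\rm min}, L}|_Y$ to have minimal singularities as well.

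The main (and essentially only) obstacle is conceptual rather than technical: one must be sure that the restriction of the local weight $\varphi_{{\rm min}, L}$ to $Y$ defines a genuine singular Hermitian metric on $L|_Y$, which is guaranteed by the fact that the asymptotic description keeps $\varphi_{{\rm min}, L}$ bounded below along $Y$ precisely because $0 \in \Box_L$. Once this is noted, the corollary is an immediate specialization of Theorem \ref{thm:main} to the zero locus of the defining functions $z$.
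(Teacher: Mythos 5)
Your argument is correct and is essentially the paper's own: the paper likewise observes that $0\in\Box_L$ and deduces from Theorem \ref{thm:main} the inequality $h_{{\rm min},L}|_Y\leq (h_{L|_Y})_e\cdot e^{O(1)}$, which already forces minimal singularities. The only (harmless) difference is that you establish the two-sided comparison $h_{{\rm min},L}|_Y\asymp (h_{L|_Y})_e$, whereas the one-sided bound coming from the $\alpha=0$ term suffices.
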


The proof of Theorem \ref{thm:main} is based on the arguments in \cite{K2}. 
We first study a special case where $X$ is a projective space bundle over $Y$ and $L$ is the relative tautological bundle. 
After that, we apply the exact description of metrics with minimal singularities for this spacial case to the study of general $(X, L, Y)$ by using, what we call, the maximum construction technique (here we use the assumption of a holomorphic tubular neighborhood). 

The organization of the paper is as follows.
In \S 2, we introduce fundamental notation and recall some facts on projective {space} bundles and singular Hermitian metrics. 
In \S 3, we show the main result in the special case where $X$ is the total space of a projective {space} bundle. 
In \S 4, we prove Theorem \ref{thm:main} in general. 
In \S 5, we give a sufficient condition for the existence of a holomorphic tubular neighborhood by using Grauert's theory. Here we also show Theorem \ref{cor:ab}. 
In \S 6, we give several examples. 

\section{Preliminaries}

\subsection{Notations on projective {space} bundles}\label{subsection:notation_P_bdl}
Let $Y$ be a compact complex manifold. 
Let $M_1, M_2, \ldots, M_r$ and $M_{r+1}$ be holomorphic line bundles on $Y$.
Let $\{U_j\}_j$ be an open cover of $Y$.
Assume that every $U_j$ is sufficiently small so that $M_\lambda|_{U_j}$ is trivial for every $\lambda$ and $j$.
Then there exist local holomorphic trivializations given by sections $s_{j,\lambda} \in H^0(U_j,M_\lambda)$.
Denote by $E$ the vector bundle $M_1 \oplus M_2 \oplus \cdots \oplus M_{r+1}$.
Then $(s_{j,\lambda})_\lambda$ forms a holomorphic frame of $E$ on $U_j$.
Let $(\xi_{j,\lambda})_\lambda$ be the dual frame of $(s_{j,\lambda})_\lambda$.

We fix the notation on $\mathbb{P}(E)$ as follows.
Let us denote by $\mathbb{P}(E)$ the projective {space} bundle {\it of hyperplanes} of $E$ over $Y$, i.e.\ $\mathbb{P}(E):=\textstyle\bigcup_y (E^*_y \setminus 0)/\mathbb{C}^*$. We will denote the bundle {\it of lines} by ${\bf P}(E)$ in this paper.
Let $\pi$ denote the natural projection $\mathbb{P}(E) \to Y$.
We will use the notation $\mathbb{P}(E)|_{U_j}$ to denote $\pi^{-1} (U_j)$.
By using homogeneous coordinates,  $([x_{j,1}:x_{j,2}:\cdots:x_{j,r+1}],y)$ denotes the point $[x_{j,1}\xi_{j,1} + x_{j,2}\xi_{j,2}+ \cdots + x_{j,r+1}\xi_{j,r+1}] \in\mathbb{P}(E)_y$ on $\mathbb{P}(E)|_{U_j}$. Here $\mathbb{P}(E)_y$ denotes the fiber $\pi^{-1}(y)$.
Let $U_j^{(\lambda)}$ be an open set $\{([x_{j,1}\xi_{j,1} + x_{j,2}\xi_{j,2} + \cdots + x_{j,r+1}\xi_{j,r+1}], y)\mid y\in U_j,\, x_{j,\lambda} \neq 0 \}$ of $\mathbb{P}(E)$.
Note that $\{U_j^{(\lambda)}\}_{j,\lambda}$ forms an open cover of $\mathbb{P}(E)$.
The {\it tautological line bundle} $\mathcal{O}_{\mathbb{P}(E)}(1)$ on $\mathbb{P}(E)$  is defined by setting its fiber on $([\xi], y)$ as $E_y/{\rm Ker}\, \xi$, where $\xi$ denotes an element of $E^*_y \setminus 0$.
Let $\Gamma_\lambda$ be the divisor of $\mathbb{P}(E)$ defined as $\mathbb{P}(M_1 \oplus M_2 \oplus \cdots \widehat{M_\lambda} \cdots  \oplus M_{r+1})$. 
The following fact is obtained by a simple computation. 

\begin{lemma}\label{lem:Gamma}
$(i)$ 
$[\Gamma_\lambda] \otimes \pi^* M_\lambda = \mathcal{O}_{\mathbb{P}(E)}(1)$,
where $[\Gamma_\lambda]$ denotes the line bundle defined by the divisor $\Gamma_\lambda$.\\
$(ii)$ 
$N_{Y/X}= \textstyle\bigoplus_{\lambda=1}^r \mathcal{O}_{\mathbb{P}(E)}(1)|_Y \otimes M_\lambda^{-1}$.
\end{lemma}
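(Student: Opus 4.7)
The plan is to deduce (i) from a global construction of a distinguished section of $\mathcal{O}_{\mathbb{P}(E)}(1)\otimes \pi^*M_\lambda^{-1}$, and then reduce (ii) to (i) via a transverse intersection argument.

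For (i), I would exhibit a global section $\sigma_\lambda$ of $\mathcal{O}_{\mathbb{P}(E)}(1) \otimes \pi^*M_\lambda^{-1}$ whose zero divisor is exactly $\Gamma_\lambda$. The canonical inclusion $M_\lambda \hookrightarrow E$ of the $\lambda$-th summand pulls back to $\pi^*M_\lambda \hookrightarrow \pi^*E$, and composing with the tautological quotient $\pi^*E \twoheadrightarrow \mathcal{O}_{\mathbb{P}(E)}(1)$ (the map sending $v \in E_y$ to its class modulo $\ker \xi$ at a point $([\xi], y)$, which makes sense from the definition of $\mathcal{O}_{\mathbb{P}(E)}(1)$ in \S 2.1) yields a homomorphism $\pi^*M_\lambda \to \mathcal{O}_{\mathbb{P}(E)}(1)$, equivalently a global section $\sigma_\lambda$ of $\mathcal{O}_{\mathbb{P}(E)}(1)\otimes\pi^*M_\lambda^{-1}$. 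In the local frame $(s_{j,\mu})_\mu$, the section $\sigma_\lambda$ evaluates at $([\sum_\mu x_{j,\mu}\xi_{j,\mu}], y)$ to $x_{j,\lambda}$, so $\{\sigma_\lambda = 0\} = \Gamma_\lambda$ with transverse (order one) vanishing: on each $U_j^{(\mu)}$ with $\mu\ne\lambda$, the ratio $x_{j,\lambda}/x_{j,\mu}$ is a local defining equation of $\Gamma_\lambda$. This yields $[\Gamma_\lambda]\cong \mathcal{O}_{\mathbb{P}(E)}(1)\otimes \pi^*M_\lambda^{-1}$, which is (i).

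For (ii), I regard $Y$ as embedded in $X = \mathbb{P}(E)$ as the section $\mathbb{P}(M_{r+1})\subset\mathbb{P}(E)$ corresponding to the last summand (the embedding relevant to Nakayama's example and to the setup of Theorem \ref{thm:main} in the projective bundle case). The crucial observation is the scheme-theoretic identity $Y = \Gamma_1 \cap \Gamma_2 \cap \cdots \cap \Gamma_r$, together with the transversality of this intersection: in the affine chart $U_j^{(r+1)}$ with coordinates $w_{j,\lambda}:=x_{j,\lambda}/x_{j,r+1}$ ($\lambda=1,\dots,r$), each $\Gamma_\lambda$ is cut out by $\{w_{j,\lambda}=0\}$ and the $w_{j,\lambda}$'s form a regular system of parameters for $X$ transverse to $Y$. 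The standard decomposition of the normal bundle under a transverse intersection of $r$ smooth divisors then gives
\[
N_{Y/X} \;=\; \bigoplus_{\lambda=1}^r N_{\Gamma_\lambda/X}\bigl|_Y \;=\; \bigoplus_{\lambda=1}^r [\Gamma_\lambda]\bigl|_Y,
\]
and substituting (i) (and using $\pi|_Y=\mathrm{id}_Y$ so that $\pi^*M_\lambda|_Y = M_\lambda$) yields the claimed formula.

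No step here is a real obstacle; the whole lemma is a bookkeeping exercise on the tautological constructions, and the only item to record carefully is the identification $Y=\bigcap_\lambda \Gamma_\lambda$ together with the transversality of this intersection along $Y$, both of which are transparent in the chart $U_j^{(r+1)}$.
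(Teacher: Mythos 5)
Your proof is correct and is precisely the ``simple computation'' that the paper leaves to the reader: the tautological quotient $\pi^*E \twoheadrightarrow \mathcal{O}_{\mathbb{P}(E)}(1)$ restricted to the summand $\pi^*M_\lambda$ gives the section cutting out $\Gamma_\lambda$ with multiplicity one (part $(i)$), and the transverse identity $Y=\Gamma_1\cap\cdots\cap\Gamma_r$ in the chart $U_j^{(r+1)}$ gives the normal bundle splitting (part $(ii)$). No discrepancy with the paper's (omitted) argument.
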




\subsection{Singular Hermitian metrics}\label{subsection:shm}

In this subsection, we review some properties of singular Hermitian metrics on line bundles.

\begin{definition}
	Let $X$ be a (possibly non-compact) complex manifold and let $L$ be a line bundle on $X$.
	A {\it singular Hermitian metric} $h$ on $L$ is defined as a  metric of $L$ with the form $\|s\|^2_h = |s|^2 e^{-\phi}$ on $U$ for each trivialization $L|_U \cong U \times \mathbb{C}$, where $\phi \in L^1_{\rm loc}(U)$.
	In this situation, we will write as $h = e^{-\phi}$ and call $\phi$ as a {\it local weight}. Note that $\phi$ is a collection of a function defined on small open sets.
	The {\it curvature} of a singular Hermitian metric $h = e^{-\phi}$ is defined as a (1,1)-current $\Theta_h = \sqrt{-1}\partial \overline{\partial} \phi$.
\end{definition}

A singular Hermitian metric $h = e^{-\phi}$ is {\it semi-positively curved} (or $h$ admits {\it semi-positive curvature}) if its local weight $\phi$ is plurisubharmonic on the set where $\phi$ is defined. In this case, its curvature is non-negative as a (1,1)-current.

Let $h_1$ and $h_2$ be singular Hermitian metrics on $L$. We say that $h_1$ is {\it more singular} than $h_2$ when, for every relatively compact set $U$, there is a constant $C>0$ such that the inequality $h_1 \geq Ch_2$ holds on 
$U$. In this case we write $h_1 \gtrsim_{\rm sing} h_2$.
We say that $h_1$ and $h_2$ have {\it equivalent singularities} (written $h_1 \sim_{\rm sing} h_2$) when both $h_1 \lesssim_{\rm sing} h_2$ and $h_1 \gtrsim_{\rm sing} h_2$ hold.
A semi-positively curved singular Hermitian metric $h$ on $L$ has {\it minimal singularities} if $h \lesssim_{\rm si
ng} h'$ for any semi-positively curved singular Hermitian metric $h'$.
When $X$ is compact, $h \gtrsim_{\rm sing} h'$ holds if and only if there exists a constant $C$ such that $h_1 \geq C h_2$ on $X$.

To investigate singular Hermitian metrics, it will be convenient to consider globally defined functions corresponding to their local weights. 
For this reason, we introduce the notion of $\theta$-plurisubharmonic functions here.
Let $\theta$ be a smooth real (1,1)-form.
We say that a function $u \in L^1_{\rm loc} (X)$ is a {\it $\theta$-plurisubharmonic function} when the inequality $\theta + i\partial \overline{\partial} u \geq 0$ holds as currents. We denote the set of $\theta$-plurisubharmonic functions on $X$ by ${\rm PSH}(X,\theta)$.

Let $L$ be a holomorphic line bundle on $X$.
Fix a smooth Hermitian metric $h_0$ on $L$ with curvature $\theta$.
Then there is a one-to-one correspondence between $\theta$-plurisubharmonic functions $u$ and semi-positively curved singular Hermitian metrics $h_0 \cdot e^{-u}$ on $L$.
We define $\theta$-plurisubharmonic functions with minimal singularities similarly to the case of metrics. 
Namely, a $\theta$-plurisubharmonic function $u$ has {\it minimal singularities} (in ${\rm PSH}(X,\theta)$) if, for every $\theta$-plurisubharmonic function $u'$, there exists a (local) constant $C$ such that $u \geq u' + C$ on each compact set.
For an $\mathbb{R}$-line bundle $L$ (i.e.\ a formal ``line bundle'' corresponding to an $\mathbb{R}$-divisor), a notion of singular Hermitian metric on $L$ is well-defined formally in this sense.

\begin{example}\label{ex:equilibrium}
Assume $X$ is compact and $L$ is {\it pseudo-effective}, i.e.\ $L$ admits a semi-positively curved singular Hermitian metric.
Fix a smooth metric $h$ with curvature $\theta$. 
{ Then, the function defined by
\[V_\theta:=\sup\{v \in {\rm PSH}(X,\theta) \mid v \leq 0 \}\]
is $\theta$-plurisubharmonic.
}
It is easily observed that $V_\theta$ { has} minimal singularities { in $PSH(X,\theta)$}.
The corresponding singular Hermitian metric $h\cdot e^{-V_\theta}$ is denoted by $h_e$, which is called the {\it equilibrium metric}.
\end{example}

\begin{example}\label{ex:bergman_type_sing_metric}
	Fix a smooth Hermitian metric $h_0$ on $L$.
	Let $f_1, f_2, \ldots, f_N \in H^0(X,L)$ be global holomorphic sections of $L$.
	Then we define a singular Hermitian metric $h$ by the formula
	\[\|f\|^2_h := \frac{\|f\|^2_{h_0}}{\sum_{j=1}^{N} \|f_j\|^2_{h_0}}.\]
	In this manner, we obtain a semi-positively curved singular Hermitian metric $h$ which is smooth on the Zariski open set $\textstyle\bigcup_{j=1}^N\{f_j \neq 0 \}$.
	If $L$ is big, { there exist a} finite number of sections $f_1, \ldots, f_N \in H^0(X,L^m)$ for sufficiently large $m$ such that $\{f_1 = \cdots = f_N = 0\} = {\bf B}(L)$ (\cite[2.1.21]{Laz}). Here we denote the tensor product $L^{\otimes m}$ by $L^m$.
	Then, we have a singular Hermitian metric on $L^m$ which is smooth on $X \setminus {\bf B}(L)$. By taking $m$-th root, we can define a singular Hermitian metric on $L$ (we call it a {\it Bergman-type metric} on $L$ obtained by $f_1,\ldots,f_N$).
\end{example}

\begin{example}\label{ex:Bergman-minimal}
Let $X$ be a compact complex manifold and { let} $L$ be a line bundle.
Fix a smooth volume form $dV$ on $X$ and a smooth metric $h = e^{-\phi}$ on $L$.
Let $\theta$ be the curvature of $h$. We define a $\theta$-{plurisubharmonic} function $V_{\phi,B}$ by
\[V_{\phi, B} := V_{h,B} :=\sup \left\{ \frac{1}{m} \log|f|^2_{h^m} \middle| m \in \mathbb{Z}, f \in H^0(X,L^m),\int_{X}|f|^2_{h^m} dV \leq 1 \right\}.\]
The corresponding singular Hermitian metric on $L$ and its local weight are denoted by $h_B = e^{-\phi_B}$.
By Proposition \ref{prop:e-vs-b} { below}, $h_B$ { has minimal singularities} when $L$ is big.

We use this construction when $L$ is a $\mathbb{Q}$-line bundle with a smooth metric $h = e^{-\phi}$, that is, for some integer $m > 0$, $L^m$ is an ordinary line bundle and $h^m = e^{-m\phi}$ is a smooth metric on $L^m$.
In this case, we take the smallest integer $m>0$ such that $L^m$ is a $\mathbb{Z}$-line bundle { and} define $\phi_B$ by $(1/m)(m\phi)_B$.
\end{example}

To compare the metrics $h_e$ and $h_B$, we need the following proposition.

\begin{proposition}[{\cite[Lemma 2.10]{K2}}]\label{prop:e-vs-b}
	Let $X$ be a projective manifold and { let} $L$ be a big line bundle.
	Let $h = e^{-\phi}$ be a smooth Hermitian metric on $L$.
	Fix a smooth volume form $dV$ on $X$.
	Then, there is a constant $C$ such that the inequality
	\[V_{\phi,B} - C \leq V_\theta \leq V_{\phi,B} \]
	holds.
\end{proposition}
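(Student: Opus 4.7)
The plan is to prove the two inequalities separately, since they require rather different machinery.

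For the right inequality $V_{\phi,B}-C\le V_\theta$, I would first note that for every $f\in H^0(X,L^m)$ the function $\frac{1}{m}\log|f|^2_{h^m}$ lies in ${\rm PSH}(X,\theta)$: in a local trivialization it equals $\frac{1}{m}\log|\tilde f|^2-\phi$, whose $\sqrt{-1}\partial\bar\partial$ is $\frac{1}{m}[Z_f]-\theta\ge-\theta$. Next, a uniform upper bound $\sup_X\frac{1}{m}\log|f|^2_{h^m}\le C_0$, independent of $m\ge 1$ and of $f$ subject to $\int_X|f|^2_{h^m}\,dV\le 1$, is obtained by applying the sub-mean value inequality to the (locally) plurisubharmonic function $|\tilde f|^2$ on a finite cover of $X$ by coordinate balls, together with $\int_{B}|\tilde f|^2\,dV\le e^{m\sup\phi}\int_{B}|f|^2_{h^m}\,dV\le e^{m\sup\phi}$; the contributions $\frac{1}{m}\log e^{m\sup\phi}=\sup\phi$ and $\frac{1}{m}\log(\mathrm{Vol}(B))^{-1}$ are both $O(1)$. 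Subtracting $C_0$ leaves a nonpositive $\theta$-plurisubharmonic function, hence $\le V_\theta$, and taking the supremum over $(f,m)$ yields the inequality.

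For the reverse inequality $V_\theta\le V_{\phi,B}$, fix any $v\in{\rm PSH}(X,\theta)$ with $v\le 0$. The task is to produce, for each $m\ge 1$ and each point $x$ where $v(x)>-\infty$, a section $f_m\in H^0(X,L^m)$ with $\int_X|f_m|^2_{h^m}\,dV\le 1$ and $\frac{1}{m}\log|f_m(x)|^2_{h^m}\ge v(x)-C/m$. My approach is Demailly's Bergman-kernel approximation applied to the singular metric $h\cdot e^{-v}$ on $L$: the associated Bergman kernel $B_m^{(v)}(x):=\sup\{|s(x)|^2_{h^m}:\int_X|s|^2_{h^m}e^{-mv}\,dV\le 1\}$ satisfies $\frac{1}{m}\log B_m^{(v)}(x)\ge v(x)-C/m$. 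Since $v\le 0$ implies $e^{-mv}\ge 1$, any such $s$ is admissible for $V_{\phi,B}$; extracting a single element $s_j$ of the weighted orthonormal basis with $|s_j(x)|^2_{h^m}\ge B_m^{(v)}(x)/\dim H^0(X,L^m)$, and using $\dim H^0(X,L^m)=O(m^n)$ (bigness of $L$), yields $V_{\phi,B}(x)\ge v(x)-C'/m$. Letting $m\to\infty$ and taking the supremum over admissible $v$ gives the claim.

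The main obstacle is the Bergman-kernel lower bound appearing in the second direction: Demailly's theorem rests on the Ohsawa--Takegoshi $L^2$-extension theorem, which is used to produce a holomorphic section prescribed at the chosen point while controlling the $L^2$-norm against the singular weight $e^{-mv}$. The first direction is, by comparison, a routine sub-mean value estimate once one observes that $|\tilde f|^2$ is itself plurisubharmonic in local coordinates.
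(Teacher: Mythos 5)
Your first inequality ($V_{\phi,B}-C\le V_\theta$: sub-mean value estimate on a finite cover by coordinate balls, then observing that the resulting bounded-above $\theta$-plurisubharmonic function is dominated by $V_\theta$) is exactly the paper's argument and is fine. The gap is in the other direction, $V_\theta\le V_{\phi,B}$. You apply Demailly's Bergman-kernel lower bound directly to the singular metric $h\cdot e^{-v}$ for an arbitrary $v\in{\rm PSH}(X,\theta)$ with $v\le 0$. But the curvature current of $h\cdot e^{-v}$ is only $\ge 0$, and the Ohsawa--Takegoshi step that produces a \emph{global} section of $L^m$ with prescribed value at a point and $L^2$-norm controlled against $e^{-m(\phi+v)}$ requires strictly positive curvature (a K\"ahler current), or else a twist by an auxiliary ample line bundle $G$ --- in which case the sections you obtain live in $H^0(X,L^m\otimes G)$ and are not admissible competitors in the supremum defining $V_{\phi,B}$, which ranges over $H^0(X,L^m)$ only. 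The local version of Demailly's approximation (gluing local Bergman kernels) does not need strict positivity, but it does not produce global sections either. As written, the estimate $\frac1m\log B_m^{(v)}(x)\ge v(x)-C/m$ is not justified for a merely semipositively curved $v$; note also that the bigness of $L$, which is an essential hypothesis, is never genuinely used in your second half (the dimension count $\dim H^0=O(m^n)$ is not where it is needed, and is in fact superfluous since the extremal section for the Bergman kernel is already an admissible competitor).

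The paper repairs exactly this point: since $L$ is big, there is $V_+\le 0$ with $\theta+\sqrt{-1}\partial\overline{\partial}V_+\ge\epsilon\omega$; one replaces $V_\theta$ by the convex combination $V_\ell=(1-1/\ell)V_\theta+(1/\ell)V_+$, whose associated weight $\phi_\ell=\phi+V_\ell$ has K\"ahler current curvature, applies Demailly's global approximation to $\phi_\ell$ to get $V_\ell-C/m\le V_{\phi,B,m}\le V_{\phi,B}$ with $C$ independent of $\ell$ and $m$, and then lets $m\to\infty$ and finally $\ell\to\infty$. Your argument becomes correct once you insert this perturbation (or an equivalent device supplying the missing strict positivity) before invoking Ohsawa--Takegoshi.
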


Before starting the proof, we shall explain how we use Proposition \ref{prop:e-vs-b} in \S 3. We shall apply it to a family of $\mathbb{Q}$-line bundles of the form
\[L^\alpha := L_1^{\alpha_1} \otimes L_2^{\alpha_2} \otimes \cdots \otimes L_{r+1}^{\alpha_{r+1}},\]
where $L_\lambda$ are $\mathbb{Z}$-line bundles, $\alpha_\lambda \geq 0$ and $\alpha_1 + \cdots + \alpha_{r+1} = 1$.
Let $e^{-\phi_\lambda}$ be a fixed smooth metric on $L_\lambda$ and { let} $m$ be the smallest positive integer such that $(L^\alpha)^m$ is a $\mathbb{Z}$-line bundle.
Then, the local weight $m\phi_\alpha := m\textstyle\sum_\lambda \alpha_\lambda \phi_\lambda$ defines a smooth metric on $(L^\alpha)^m$.
The constant $C$ in Proposition \ref{prop:e-vs-b} depends only on $C_1$, $C_2$ and $C(\phi)$, which will be defined in the proof below.
We note that { constants} $C_1$ and $C_2$ are independent of the choice of line bundles, and $C(\phi)$ only depends on the differences $(\textstyle\sup_{B_j''} - \textstyle\inf_{B_j''}) \phi$, where { $\{B_j''\}_j$ is a open cover of $X$ consisting of open balls}.
Thus there exists a constant $C_3$ depending on the metrics $e^{-\phi_\lambda}$ and independent of $\alpha$, such that
\[V_{m\phi^\alpha,B} - \log(C_1C_2) - mC_3 \leq V_{m\theta_\alpha} \leq V_{m\phi^\alpha,B}.\]
Dividing by $m$, we have that 
\[V_{\phi^\alpha,B} - \frac{1}{m}\log(C_1C_2) - C_3 \leq V_{\theta_\alpha} \leq V_{\phi^\alpha,B}.\]
In conclusion, there exists a constant $C$ such that we have 
\begin{equation}
V_{\phi_\alpha,B} - C \leq V_{\theta_\alpha} \leq V_{\phi_\alpha,B}\label{eqn:V_compare}
\end{equation}
for every $\alpha$ such that $\alpha_\lambda \geq 0$ and $\alpha_1 + \alpha_2 \cdots + \alpha_{r+1} = 1$.

\begin{proof}[\sc Proof of Proposition \ref{prop:e-vs-b}]
First we prove the inequality $V_\theta \leq V_{\phi,B}$.
Since $L$ is big, there exits a singular Hermitian metric $\psi_+$ on $L$ such that its curvature is a K\"{a}hler current, i.e.\ $\Theta_{\psi_+} \geq \epsilon \omega$ for some $\epsilon>0$ and some K\"{a}hler form $\omega$.
Define a $\theta$-{ plurisubharmonic} function $V_+$ by $\psi_+ = \phi + V_+$.
We { may} assume that $V_+ \leq 0$.
Let
$V_\ell := \left(1-{1}/{\ell}\right)V_\theta + ({1}/{\ell})V_+$
and $\phi_\ell := \phi + V_\ell$.
Then the curvature of the metric $e^{-\phi_\ell}$ is a K\"{a}hler current.
Now consider the following approximations:
\[V_{\phi, B,m} := \sup{}^* \left\{\frac{1}{m} \log |f|^2_{m\phi}\middle| f \in H^0(X, L^m), \int_X |f|^2 e^{-m\phi} dV \leq 1 \right\}, \text{ and}\]
\[V_{\phi_\ell, B,m} := \sup{}^* \left\{\frac{1}{m} \log |f|^2_{m\phi_\ell}\middle| f \in H^0(X, L^m), \int_X |f|^2 e^{-m\phi_\ell} dV \leq 1 \right\}. \]
Then we have that
$V_\ell + V_{\phi_{\ell},B,m}\leq V_{\phi,B,m}$.
Applying Demailly's approximation theorem (\cite[Theorem 13.21]{D2}) to $\phi_\ell$, we have that
$V_{\phi_\ell, B, m}\geq V_{\ell} - C/m$,
where $C$ is independent of $\ell$ and $m$.
Hence we have
$V_\ell - \textstyle\frac{C}{m} \leq V_{\phi,B,m} \leq V_{\phi,B}$.
By letting $m\to \infty$, we obtain $V_\ell \leq V_{\phi, B}$.
After that, letting $\ell \to \infty$, we have that $V_\theta \leq V_{\phi,B}$.

Next we prove the inequality $V_{\phi,B} - C \leq V_\theta$.
Fix a collection of open coordinate balls $B'_j \subset B''_j \subset B_j$ such that $\{B'_j\}_j$ is an open cover of $X$ and the radii of $B'_j$, $B''_j$ and $B_j$ are 1/2, 1 and 2 respectively.
Fix a local trivialization of $L$.
Take $f \in H^0(X, L^m)$ with $\int_X |f|^2 e^{-m\phi}dV \leq 1$.
Then, for every $p \in B'_j$, we have that
\begin{align*}
|f(p)|^2 \leq& \frac{1}{\pi^n (1/2)^{2n}/n! } \int_{|z-p| < 1/2} |f|^2 d\lambda \\
\leq & C_1 C_2 \cdot e^{m \sup_{B''_j} \phi} \int_{|z-p| < 1/2} |f|^2 e^{-m\phi} dV\\
\leq & C_1 C_2 \cdot e^{m \sup_{B''_j} \phi}.
\end{align*}
Here we write the constants as $C_1 := \textstyle\frac{1}{\pi^n (1/2)^{2n}/n! }$ and $C_2 := \textstyle\sup_{B''_j} d\lambda/dV$.
{ It follows} that $|f(p)|^2 e^{-m\phi(p)} \leq C_1C_2 \cdot \exp({m( \sup_{B''_j} \phi - \phi (p))}) \leq C_1 C_2 \cdot \exp({m(\sup_{B''_j} - \inf_{B''_j} )\phi})$.
{ Thus} we have that
$$\frac{1}{m}\log|f(p)|_{m\phi}^2 \leq (\log C_1C_2)/m + \left(\sup_{B''_j} - \inf_{B''_j}\right) \phi.$$
It follows that
$$V_{\phi,B,m}(p) \leq (\log C_1C_2)/m + \left(\sup_{B''_j} - \inf_{B''_j}\right) \phi.$$
The right-hand side is estimated by using the constants $C_1, C_2$ and a constant $C(\phi)$ depends only on $\phi$. Taking the supremum over $m$, we have that 
$V_{\phi,B}(p) \leq \log (C_1 C_2) + C(\phi)$. { We denote this constant by $C$.}
Considering all $B_j$, we have $V_{\phi,B} - C \leq V_\theta$ for some constant $C$.
\end{proof}



\section{Projective bundles}\label{section:P_bdl}

\subsection{Settings in the case of $\mathbb{P}^r$-bundle}
Let $Y$ be a projective manifold. 
Let $M_1,M_2$, $\cdots$, $M_r$ and $M_{r+1}$ be line bundles.
We assume that the first $r$ line bundles $M_1,\ldots, M_r$ are ample (we do not assume the ampleness of $M_{r+1}$).
Define a manifold $X$ by $X:=\mathbb{P}(M_1 \oplus M_2 \oplus \cdots \oplus M_{r+1})$ and a line bundle $L$ on $X$ by $L:=\mathcal{O}_{\mathbb{P}(M_1 \oplus M_2 \oplus \cdots \oplus M_{r+1})}(1)$.
Let us recall that $\mathbb{P}(E)$ denotes the projective {space} bundle of {\it hyperplanes} of $E$.
Let $\pi$ denote the natural projection $X \to Y$.
We regard $Y$ as a submanifold of $X$ via the inclusion { $\mathbb{P}(M_{r+1}) \subset \mathbb{P}(M_1 \oplus M_2 \oplus \cdots \oplus M_{r+1} )$} induced by the projection $M_1 \oplus M_2 \oplus \cdots \oplus M_{r+1} \to M_{r+1}$.
Let $h_\lambda$ ($1 \leq \lambda \leq r+1$) be smooth { Hermitian} metrics on $M_\lambda$ and $\theta_\lambda$ be the curvature forms of $h_\lambda$.
Here we assume that every $h_\lambda$ ($1 \leq \lambda \leq r$) has a positive curvature, i.e.\ { the curvature form} $\theta_\lambda$ is a positive (1,1)-form for every $\lambda =1,\ldots,r$.
Let us denote by $h_{L} = e^{-\varphi_{L}}$ the naturally induced metric on $L$ from $h_1, \ldots, h_{r+1}$ by considering the Euler sequence.
We denote by $\theta_L$ the curvature of $h_L$.
Let $\Box_L$ be a convex set defined in \S 1 as follows: 
\[
\Box_L=\left\{\alpha=(\alpha_1, \dots, \alpha_r)\in \mathbb{R}^r_{\geq 0} \middle|
\begin{matrix}
  |\alpha| \leq 1,\text{ and} \\
  c_1(L|_Y)+\textstyle\sum_{\lambda=1}^r\alpha_\lambda c_1(M_\lambda \otimes L|_Y^{-1}) \text{ is pseudo-effective}
\end{matrix}
\right\}, 
\]
where $|\alpha|$ denotes $\alpha_1 + \alpha_2 + \cdots + \alpha_r$.
Here we use the direct decomposition $N_{Y/X}= \textstyle\bigoplus_{\lambda=1}^r (L \otimes \pi^*M_\lambda^{-1})|_Y$ (see Lemma \ref{lem:Gamma}). 

{ The following theorem is the main result of this section.}

\begin{theorem}\label{thm:P_bdl_main}
	Let $Y$, $M_\lambda$, $X$, $L$ and $h_\lambda$ be as above.
	For an $r$-tuple of { non-negative} real numbers $\alpha = (\alpha_1,\ldots, \alpha_{r})$ { with $\alpha_1 + \alpha_2 + \cdots + \alpha_r \leq 1$} and a real number $\alpha_{r+1} := 1 - \alpha_1 {- \alpha_2} - \cdots - \alpha_{r}$, define a function $u_\alpha(x)$ on $X$ as follows: $$u_\alpha (x) := \alpha_1 \log|s_1|^2_{\widehat{h}_1} + \alpha_2\log|s_2|^2_{\widehat{h}_2}+ \cdots + \alpha_{r+1} \log |s_{r+1}|^2_{\widehat{h}_{r+1}} +\pi^* V_{\theta_\alpha}.$$
	Here, $s_\lambda$ denotes the canonical section of a divisor $\Gamma_\lambda$, 
	$\widehat{h}_\lambda$ denotes the metric on the line bundle $[\Gamma_\lambda]$ defined by $h_L/\pi^*h_\lambda$, and $\theta_\alpha$ denotes the (1,1)-form $\textstyle\sum_{\lambda=1}^{r+1}\alpha_\lambda\theta_\lambda$.
	Then,\\
	$(i)$ $u_\alpha$ is a $\theta_L$-plurisubharmonic function.\\
	$(ii)$ For every fixed $x \in X$, there exists the maximum value of the function $u_\alpha (x)$ of $\alpha$ on $\alpha \in \Box_L$.\\
	$(iii)$ Define a function 
	$\widehat{V} (x)$ on $X$ by $\widehat{V} (x):={\max_{\substack{(\alpha_1, \ldots, \alpha_r) \in \Box_L  }}} u_\alpha (x)$.
	Then $\widehat{V}$ is upper semi-continuous and $\theta_L$-plurisubharmonic. \\
	$(iv)$ $\widehat{V}$ is a { $\theta_L$-plurisubharmonic function with minimal singularitie}s.
\end{theorem}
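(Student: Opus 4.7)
My plan is to dispatch (i)--(iii) by direct computation and envelope generalities, and to concentrate the substantive work on (iv), where the uniform Bergman comparison \eqref{eqn:V_compare} carries the argument.

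For (i), the identity $[\Gamma_\lambda]\otimes\pi^*M_\lambda=L$ from Lemma \ref{lem:Gamma}(i) gives $\Theta_{\widehat h_\lambda}=\theta_L-\pi^*\theta_\lambda$, so Poincar\'e--Lelong produces $\sqrt{-1}\partial\bar\partial\log|s_\lambda|^2_{\widehat h_\lambda}=[\Gamma_\lambda]-(\theta_L-\pi^*\theta_\lambda)$; summing against $\alpha_\lambda$, using $\sum_{\lambda=1}^{r+1}\alpha_\lambda=1$ and the $\theta_\alpha$-plurisubharmonicity of $V_{\theta_\alpha}$ yields
\[
\theta_L+\sqrt{-1}\partial\bar\partial u_\alpha=\sum_{\lambda=1}^{r+1}\alpha_\lambda[\Gamma_\lambda]+\pi^*\bigl(\theta_\alpha+\sqrt{-1}\partial\bar\partial V_{\theta_\alpha}\bigr)\ge 0,
\]
since each $\alpha_\lambda\ge 0$. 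For (ii), $\Box_L$ is closed and bounded in $\mathbb{R}^r$, hence compact; the first summand of $u_\alpha(x)$ is jointly upper semi-continuous in $(\alpha,x)$ with values in $[-\infty,\mathrm{const}]$, and upper semi-continuity of $\alpha\mapsto V_{\theta_\alpha}(y)$ can be extracted from the truncated Bergman envelopes $V_{\phi_\alpha,B,m}$ together with \eqref{eqn:V_compare}, so the compact supremum is attained. For (iii), joint u.s.c.\ on $\Box_L\times U$ makes $\widehat V$ upper semi-continuous in $x$, and since $\widehat V$ coincides with its own u.s.c.\ regularization while being a supremum of $\theta_L$-plurisubharmonic functions, it is itself $\theta_L$-plurisubharmonic.

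The heart of the proof is (iv). The upper bound is easy: since each $|s_\lambda|^2_{\widehat h_\lambda}$ is a bounded non-negative smooth function on $X$ and $V_{\theta_\alpha}\le 0$, $u_\alpha$ is uniformly bounded above, hence so is $\widehat V$; being $\theta_L$-plurisubharmonic, $\widehat V\le V_{\theta_L}+\mathrm{const}$ by definition of $V_{\theta_L}$. For the lower bound, I would compare $\widehat V$ with the Bergman metric $V_{\phi_L,B}$, equivalent to $V_{\theta_L}$ by Proposition \ref{prop:e-vs-b}. Given $s\in H^0(X,L^m)$ with $\|s\|_{L^2(X)}\le 1$, use the canonical identification $H^0(X,L^m)\cong H^0(Y,\mathrm{Sym}^m E)=\bigoplus_{|k|=m}H^0(Y,\bigotimes_\lambda M_\lambda^{k_\lambda})$ to decompose $s=\sum_{|k|=m}\bigl(\prod_\lambda s_\lambda^{k_\lambda}\bigr)\cdot\pi^*f_k$. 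The $U(1)^{r+1}$-symmetry on each projective fibre makes this decomposition $L^2$-orthogonal, so fibrewise integration yields $\int_Y|f_k|^2_{\phi_k}I_k(y)\,dV_Y\le 1$ with $I_k(y):=\int_{\pi^{-1}(y)}\prod_\lambda|s_\lambda|^{2k_\lambda}_{\widehat h_\lambda}\,dV_{\mathrm{fibre}}$. A Stirling computation bounds $\tfrac{1}{m}\log(1/\min_y I_k)$ uniformly on the simplex. Combined with the trivial bound $|s(x)|^2_{h_L^m}\le N_m^2\max_k|(\prod_\lambda s_\lambda^{k_\lambda}(x))\pi^*f_k(x)|^2_{h_L^m}$ with $N_m=O(m^r)$, and writing $\alpha=k/m\in\Box_L$ (which holds for $f_k\not\equiv 0$ since effectiveness of $\bigotimes_\lambda M_\lambda^{k_\lambda}$ implies pseudo-effectivity), uniform application of \eqref{eqn:V_compare} gives
\[
\tfrac{1}{m}\log|s(x)|^2_{h_L^m}\le\sum_\lambda\alpha_\lambda\log|s_\lambda(x)|^2_{\widehat h_\lambda}+V_{\theta_\alpha}(\pi(x))+O(1)\le\widehat V(x)+C.
\]
Passing to the supremum over $s$ and letting $m\to\infty$ gives $V_{\phi_L,B}\le\widehat V+C$, whence $V_{\theta_L}\le\widehat V+C'$, completing the minimal-singularity argument.

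The main obstacle is the careful bookkeeping of uniformity in (iv): the additive error in the Bergman comparison must be independent of both $m$ and $\alpha=k/m$, which is precisely what \eqref{eqn:V_compare} provides; and the fibre-integral $I_k(y)$ must admit a lower bound whose $\tfrac{1}{m}\log$ remains bounded uniformly in $k$ and $y$, which reduces to a Stirling estimate on Fubini--Study monomial integrals on each fibre. A subsidiary point, the upper semi-continuity of $\alpha\mapsto V_{\theta_\alpha}$ used in (ii)--(iii), likewise reduces via \eqref{eqn:V_compare} to the continuity of truncated Bergman envelopes in the metric parameter, which is transparent.
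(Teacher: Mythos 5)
Your treatment of (i) and of the core of (iv) is essentially the paper's. The Poincar\'e--Lelong computation for (i) is exactly right, and for (iv) the orthogonal monomial decomposition of $H^0(X,L^m)$ under the fibrewise torus action, the lower bound on the fibre integrals $I_k(y)$ uniform in $k/m$, and the $\alpha$-uniform constant in \eqref{eqn:V_compare} are precisely the ingredients the authors use. (They bound the fibre integral by H\"older's inequality against a fixed, $m$-independent Beta-function integral, Lemma \ref{lem:integration}, rather than by a Stirling estimate on the $m$-dependent integral; either route gives the same uniform $O(1)$, so this is only a cosmetic difference.) Your observation that $f_k\not\equiv 0$ forces $k/m\in\Box_L$ is also correct and needed.

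The genuine gap is in (ii)--(iii). You assert that upper semi-continuity of $(\alpha,y)\mapsto V_{\theta_\alpha}(y)$ ``can be extracted from the truncated Bergman envelopes together with \eqref{eqn:V_compare}'' and that this reduction is ``transparent.'' It is not, and the inference as stated fails for two reasons. First, \eqref{eqn:V_compare} pins $V_{\theta_\alpha}$ down only up to an additive constant, and a function sandwiched between $g-C$ and $g$ inherits no semi-continuity whatsoever from $g$. Second, the full Bergman envelope $V_{\phi_\alpha,B}$ is a supremum over $m$ of the truncated envelopes, hence is naturally \emph{lower} semi-continuous in the parameter --- the wrong direction --- and it is only defined for rational $\alpha$, whereas $\Box_L$ is a genuine convex body over which you need the maximum attained. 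This is exactly the point to which the paper devotes a separate argument: Proposition \ref{prop:usc}, proved via Lemma \ref{lem:V_alpha_lim}, which uses the strict positivity of $\theta_1,\dots,\theta_r$ to show that $V_\alpha/(1-|\alpha|)$ is monotone nondecreasing in $\alpha$ and continuous from above along decreasing limits $\beta\downarrow\alpha$; joint upper semi-continuity of $F(\alpha,y)=V_{\theta_\alpha}(y)$ is then deduced from this monotonicity (with a separate, easy treatment of the boundary face $|\alpha|=1$, where $V_{\alpha}\equiv 0$). Without this lemma or an equivalent substitute, the existence of the maximum in (ii) and the upper semi-continuity and $\theta_L$-plurisubharmonicity of $\widehat V$ in (iii) --- which are also quietly used in (iv) when you write $\widehat V(x)+C$ as an upper bound for the supremum over $\alpha=k/m$ --- are not established.
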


\subsection{The relation between Theorem 3.1 and Theorem 1.2.}
Before proving Theorem \ref{thm:P_bdl_main}, we shall explain the relation between Theorem \ref{thm:P_bdl_main} and Theorem \ref{thm:main}. We assume that $X$, $Y$ and $L$ are { those} in Theorem \ref{thm:main}.
We construct a ``projective {space} bundle model'' $(\widetilde{X}, \widetilde{Y}, \widetilde{L})$, to which we apply Theorem \ref{thm:P_bdl_main}.
We define $\widetilde{X}$ by $\mathbb{P}(\mathbb{I}_Y \oplus N^*_{Y/X})$, $\widetilde{Y}$ by $\mathbb{P}(\mathbb{I}_Y)$, and $\widetilde{L}$ by $\mathcal{O}_{\mathbb{P}(\mathbb{I}_Y \oplus N^*_{Y/X})} (1)$.
In \S 4, we use a trick called { the} {\it maximum construction} to get a metric with minimal singularities on $L$ from that on $\widetilde{L}$.

To check that $(\widetilde{X}, \widetilde{Y}, \widetilde{L})$ satisfies the assumption of Theorem \ref{thm:P_bdl_main}, we have to choose appropriate line bundles $M_\lambda$ on $\widetilde{Y}$ as in the following lemma.

\begin{lemma}
Let $N_\lambda$ be line bundles as in Theorem \ref{thm:main}.
If one take 
$M_\lambda = L|_Y \otimes N_\lambda^{-1} \text{ for } \lambda = 1,\ldots,r$ 
and 
$M_{r+1} = L|_Y$, one have that
$\widetilde{X} \cong \mathbb{P}(M_1 \oplus \cdots \oplus M_{r+1})$
and 
$\widetilde{L} = \mathcal{O}_{\mathbb{P}(M_1\oplus M_2\oplus\cdots\oplus M_{r+1})}(1)$.
\end{lemma}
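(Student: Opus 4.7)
The plan is a direct bundle-theoretic computation based on two standard observations: a rearrangement that exhibits $L|_Y$ as a common tensor factor in $\bigoplus_\lambda M_\lambda$, and the invariance of projectivization under tensoring by a line bundle.

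First I would verify the underlying vector-bundle identity. By Condition~\ref{cond:main}(ii), $N_{Y/X} = N_1 \oplus \cdots \oplus N_r$, hence $N^*_{Y/X} \cong N_1^{-1} \oplus \cdots \oplus N_r^{-1}$. Substituting the prescription $M_\lambda = L|_Y \otimes N_\lambda^{-1}$ for $\lambda \leq r$ and $M_{r+1} = L|_Y$, and then factoring $L|_Y$ out of every summand, I obtain
\[
M_1 \oplus M_2 \oplus \cdots \oplus M_{r+1} \;\cong\; L|_Y \otimes \bigl(\mathbb{I}_Y \oplus N^*_{Y/X}\bigr)
\]
after a harmless reordering of the summands.

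I would then apply the canonical identification $\mathbb{P}(E \otimes F) \xrightarrow{\sim} \mathbb{P}(E)$ over $Y$, available for any vector bundle $E$ and any line bundle $F$, given fiberwise by $[\xi] \mapsto [\xi \otimes \ell_y^*]$ for any nonvanishing local section $\ell_y$ of $F$ (the resulting class is independent of this choice). Taking $E = \mathbb{I}_Y \oplus N^*_{Y/X}$ and $F = L|_Y$ and combining with the previous step gives $\mathbb{P}(M_1 \oplus \cdots \oplus M_{r+1}) \cong \mathbb{P}(\mathbb{I}_Y \oplus N^*_{Y/X}) = \widetilde{X}$, which is the first assertion. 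For the tautological line bundle I would track the fiber description from \S 2.1 through this identification: at the point $[\xi \otimes \ell_y^*]$ of $\mathbb{P}(E \otimes F)$ the fiber of $\mathcal{O}(1)$ computes as
\[
(E \otimes F)_y / \ker(\xi \otimes \ell_y^*) \;=\; (E_y/\ker\xi) \otimes F_y,
\]
which, combined with the first step and the defining fiber of $\widetilde{L}$, identifies $\mathcal{O}_{\mathbb{P}(M_1\oplus \cdots \oplus M_{r+1})}(1)$ with $\widetilde{L}$ under the canonical isomorphism, proving the second assertion.

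Since the whole argument is formal bundle algebra on $Y$, I do not expect any substantive obstacle; the only care required is in the bookkeeping of the twist by $\pi^* L|_Y$ that arises when translating the tautological bundle between the two projective-bundle presentations, so that under the canonical identification the resulting tautological line bundle is exactly $\widetilde{L}$ and not some extra twist of it.
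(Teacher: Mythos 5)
Your argument is correct and is essentially the paper's own proof: factor $L|_Y$ out of $\bigoplus_\lambda M_\lambda$ to get $L|_Y\otimes(\mathbb{I}_Y\oplus N^*_{Y/X})$, use the invariance $\mathbb{P}(E\otimes F)\cong\mathbb{P}(E)$ for a line bundle $F$, and track the tautological bundle via $\mathcal{O}_{\mathbb{P}(E\otimes F)}(1)\cong\mathcal{O}_{\mathbb{P}(E)}(1)\otimes\pi^*F$ with $F=L|_Y=M_{r+1}$. Your fiberwise verification of that last identification is just a more explicit rendering of the one-line computation the paper gives.
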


\begin{proof}
We have that
\begin{align*}
\mathbb{P}(M_1 \oplus \cdots \oplus M_{r+1}) &= \mathbb{P}(L|_Y \otimes (N_1^{-1} \oplus \cdots\oplus N_r^{-1} \oplus \mathbb{I}_Y))\\
&\cong\mathbb{P}(N_1^{-1} \oplus \cdots\oplus N_r^{-1} \oplus \mathbb{I}_Y)\\
&=\mathbb{P}(N_{Y/X}^* \oplus \mathbb{I}_Y) = \widetilde{X}.
\end{align*}
We also have that 
$\mathcal{O}_{\mathbb{P}(M_1\oplus M_2\oplus\cdots\oplus M_{r+1})}(1)
=\mathcal{O}_{\mathbb{P}(N_1^{-1}\oplus N_2^{-1}\oplus\cdots\oplus N_r^{-1}\oplus \mathbb{I}_Y)}(1)\otimes \pi^*M_{r+1} = \widetilde{L}$. 
\end{proof}

Note that, under this choice, we have 
$N_{Y/X}= \textstyle\bigoplus_{\lambda=1}^r (L \otimes \pi^*M_\lambda^{-1})|_Y$. 
To use the maximum construction argument in \S 4, we have to use the following lemma.

\begin{lemma}
In this situation, one have $SB(\widetilde{L}) \subset \widetilde{Y}$.
\end{lemma}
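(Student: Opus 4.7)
The plan is to prove the slightly stronger statement that for every point $\tilde{x} \in \widetilde{X} \setminus \widetilde{Y}$ there exist an integer $m \geq 1$ and a section $\sigma \in H^0(\widetilde{X}, \widetilde{L}^m)$ with $\sigma(\tilde{x}) \neq 0$; this immediately gives $\tilde{x} \notin \mathrm{Bs}(\widetilde{L}^m)$, and hence $SB(\widetilde{L}) \subset \widetilde{Y}$. The sections will be manufactured from the canonical sections of the divisors $\Gamma_\lambda$ introduced in \S 2.1 together with pullbacks of sections of $M_\lambda$.

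First I would pin down $\widetilde{Y}$ scheme-theoretically inside $\widetilde{X}$. In the fiberwise homogeneous coordinates $[x_1 : \cdots : x_{r+1}]$ on $\widetilde{X} = \mathbb{P}(M_1 \oplus \cdots \oplus M_{r+1})$, the subvariety $\widetilde{Y} = \mathbb{P}(M_{r+1})$ is cut out by $\{x_1 = \cdots = x_r = 0\}$, while $\Gamma_\lambda = \mathbb{P}(M_1 \oplus \cdots \widehat{M_\lambda} \cdots \oplus M_{r+1})$ is cut out by $\{x_\lambda = 0\}$. Therefore $\widetilde{Y} = \bigcap_{\lambda=1}^{r} \Gamma_\lambda$, so for any $\tilde{x} \in \widetilde{X} \setminus \widetilde{Y}$ one can choose an index $\lambda_0 \in \{1, \dots, r\}$ with $\tilde{x} \notin \Gamma_{\lambda_0}$; equivalently, the canonical section $s_{\lambda_0}$ of $[\Gamma_{\lambda_0}]$ is non-vanishing at $\tilde{x}$.

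Next, Lemma 2.1 (i) identifies $\widetilde{L} \cong [\Gamma_{\lambda_0}] \otimes \pi^* M_{\lambda_0}$, and consequently $\widetilde{L}^m \cong [m\Gamma_{\lambda_0}] \otimes \pi^* M_{\lambda_0}^m$ for every $m \geq 1$. Hence for any $t \in H^0(Y, M_{\lambda_0}^m)$, the product $\sigma := s_{\lambda_0}^{\otimes m} \cdot \pi^* t$ is a well-defined element of $H^0(\widetilde{X}, \widetilde{L}^m)$, and it vanishes at $\tilde{x}$ if and only if $t(\pi(\tilde{x})) = 0$. Since $M_{\lambda_0} = L|_Y \otimes N_{\lambda_0}^{-1}$ is ample under the standing assumption of this section, $M_{\lambda_0}^m$ is globally generated for all $m$ sufficiently large, and we may pick $t$ with $t(\pi(\tilde{x})) \neq 0$; this yields the desired $\sigma$, and hence $\tilde{x} \notin SB(\widetilde{L})$.

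The argument is essentially bookkeeping on top of the identifications already established in \S 2.1, combined with the ampleness of the $M_\lambda$; I do not anticipate any substantive analytic or geometric obstacle. The only point requiring minor care is aligning the duality convention (\emph{bundle of hyperplanes} versus lines, and the orientation of Lemma 2.1 (i)) so that $\widetilde{L}$ is written as $[\Gamma_{\lambda_0}] \otimes \pi^* M_{\lambda_0}$ rather than its inverse, which would reverse the roles of ``positive'' and ``negative'' and break the construction.
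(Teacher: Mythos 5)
Your argument is correct and is essentially the paper's own proof: both use the canonical sections $s_{\lambda}$ of $[\Gamma_{\lambda}]=\pi^*M_{\lambda}^{-1}\otimes\widetilde{L}$ multiplied by pullbacks of sections of the ample $M_{\lambda}^m=(L|_Y\otimes N_{\lambda}^{-1})^m$ to produce sections of $\widetilde{L}^m$ whose common zero locus is $\Gamma_{\lambda}$, and then intersect over $\lambda=1,\dots,r$ to land in $\widetilde{Y}=\bigcap_{\lambda=1}^r\Gamma_{\lambda}$. Your pointwise formulation and the remark on the hyperplane-versus-line convention are just slightly more explicit bookkeeping of the same idea.
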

\begin{proof}
Recall that $s_\lambda$ is the canonical section of the line bundle $[\Gamma_\lambda]= \pi^*M_\lambda^{-1}\otimes\widetilde{L}$ (Lemma \ref{lem:Gamma}).
For every global section $f\in H^0(Y, \pi^*M_\lambda^m)$ ($m \geq 0$), we have that $s_\lambda^m\otimes \pi^*f \in H^0(\widetilde{X}, \widetilde{L}^m)$.
By the assumption of Theorem \ref{thm:main}, the line bundle $L|_Y \otimes N_\lambda^{-1}$ is ample for { every} $\lambda = 1,\ldots,r$.
Therefore, for sufficiently large $m$, { there exist} global sections of $\widetilde{L}$ whose common zero is $\Gamma_\lambda$.
Using this argument for { every} $\lambda$, we have that  ${\bf B}(\widetilde{L})\subset \widetilde{Y}$. 
\end{proof}

\subsection{Proof of Theorem \ref{thm:P_bdl_main}}
\subsubsection{The outline of the proof}\label{subsection: outline}
We obtain $(i)$ easily from the construction of $u_\alpha$.
We will prove $(ii)$ and $(iii)$ in \S \ref{subsection: usc}.
Now we explain the outline proof of $(iv)$.
Fix a K\"{a}hler form $\omega$ on $Y$.
Define functions $\widehat{V}^\mathbb{Q}$ and  $\widehat{V}^\mathbb{Q}_B$ on $X$ by
\[\widehat{V}^\mathbb{Q}:=\hskip-4mm{\sup_{\substack{\alpha = (\alpha_1, \ldots, \alpha_r)\\ \alpha \in \Box_L   \cap \mathbb{Q}^r} }}\hskip-6mm\,^* \hskip3mm u_\alpha\]
and
\[\widehat{V}_B^\mathbb{Q}:=\hskip-4mm{\sup_{\substack{\alpha = (\alpha_1, \ldots, \alpha_r)\\ \alpha \in \Box_L   \cap \mathbb{Q}^r} }}\hskip-6mm\,^* \hskip3mm \left[\sum_{\lambda=1}^{r+1} \alpha_\lambda\log|s_\lambda|^2_{\widehat{h}_\lambda} +\pi^* V_{h_1^{\alpha_1} h_2^{\alpha_2}\cdots h_{r+1}^{\alpha_{r+1}},B}\right],\]
where $V_{h_1^{\alpha_1} h_2^{\alpha_2}\cdots h_{r+1}^{\alpha_{r+1}},B}$ is a function on $Y$ defined as in Example \ref{ex:Bergman-minimal} with respect to the volume form $\omega^n$ on $Y$.
In \S 3.3.4, we prove that $V_{\theta_L,B} \leq \widehat{V}^{\mathbb{Q}}_B+C$ holds for some constant $C$, where $V_{\theta_L,B}$ is also defined as in Example \ref{ex:Bergman-minimal} (we specify the volume form on $X$ later in \S 3.3.3).
Then we have that, for some $C' \geq 0$,
\[V_{\theta_L,B} \leq \widehat{V}^\mathbb{Q}_B + C \leq \widehat{V}^\mathbb{Q} + C' \leq \widehat{V}+C'.\]
Here, the second inequality follows from the equation (\ref{eqn:V_compare}) before the proof of Proposition \ref{prop:e-vs-b}.

\subsubsection{Proof of Theorem \ref{thm:P_bdl_main} $(ii)$ and $(iii)$}\label{subsection: usc}
In this subsection, we will show the upper semicontinuity of $\widehat{V}$.
For simplicity of notation, we write $V_\alpha$ instead of $V_{\theta_\alpha}=\sup\{\psi\in PSH(Y, \theta_\alpha)\mid \psi\geq 0\}$.
We will show the following proposition.

\begin{proposition}\label{prop:usc}
	The function $F\colon\Box_L\times Y\to \mathbb{R}\cup\{\infty\}: F(\alpha, y):=V_\alpha(y)$ is upper semi-continuous.
\end{proposition}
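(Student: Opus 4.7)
My plan is to prove the upper semi-continuity of $F$ by bounding $V_{\theta_{\alpha_k}}$, for a sequence $\alpha_k \to \alpha_0$ in $\Box_L$, by an envelope attached to a slightly enlarged class $\theta_{\alpha_0} + \epsilon_k T$ (for a fixed K\"{a}hler form $T$ on $Y$), and then letting $\epsilon_k \searrow 0$. Given any sequence $(\alpha_k, y_k) \to (\alpha_0, y_0)$ in $\Box_L \times Y$, since $\theta_\alpha = \sum_\lambda \alpha_\lambda \theta_\lambda$ depends continuously on $\alpha$ and $Y$ is compact, I can select a decreasing $\epsilon_k \searrow 0$ with $\theta_{\alpha_k} \leq \theta_{\alpha_0} + \epsilon_k T$ pointwise on $Y$. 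This inclusion at the level of $(1,1)$-forms gives ${\rm PSH}(Y, \theta_{\alpha_k}) \cap \{\psi \leq 0\} \subset {\rm PSH}(Y, \theta_{\alpha_0} + \epsilon_k T) \cap \{\psi \leq 0\}$, hence the pointwise comparison $V_{\theta_{\alpha_k}} \leq V_{\theta_{\alpha_0} + \epsilon_k T}$.

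Next I would establish the monotone convergence $V_{\theta_{\alpha_0} + \epsilon T} \searrow V_{\theta_{\alpha_0}}$ as $\epsilon \searrow 0^+$. The pointwise decrease is immediate from the envelope definition; writing $V_\infty$ for the pointwise limit, $V_\infty$ is sandwiched between the locally integrable function $V_{\theta_{\alpha_0}}$ (a genuine $\theta_{\alpha_0}$-psh function since $\alpha_0 \in \Box_L$) and $0$, so dominated convergence gives $V_{\theta_{\alpha_0} + \epsilon T} \to V_\infty$ in $L^1_{{\rm loc}}$. Passing the current inequality $\theta_{\alpha_0} + \epsilon T + i\partial\overline{\partial} V_{\theta_{\alpha_0} + \epsilon T} \geq 0$ to the limit then yields $V_\infty \in {\rm PSH}(Y, \theta_{\alpha_0})$, and combined with $V_\infty \leq 0$ this forces $V_\infty \leq V_{\theta_{\alpha_0}}$; the reverse inequality is obvious.

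Combining, each $V_{\theta_{\alpha_0} + \epsilon_k T}$ is upper semi-continuous, and the family is decreasing in $k$. Therefore, for every fixed $j$ and all $k \geq j$,
\[
F(\alpha_k, y_k) = V_{\theta_{\alpha_k}}(y_k) \leq V_{\theta_{\alpha_0} + \epsilon_j T}(y_k).
\]
Taking $\limsup_{k\to\infty}$ and using USC of $V_{\theta_{\alpha_0} + \epsilon_j T}$ at $y_0$, then letting $j \to \infty$ via the monotone convergence just proved, I would conclude $\limsup_{k\to\infty} F(\alpha_k, y_k) \leq V_{\theta_{\alpha_0}}(y_0) = F(\alpha_0, y_0)$, the required upper semi-continuity. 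The hardest step is expected to be the monotone convergence $V_{\theta_{\alpha_0} + \epsilon T} \searrow V_{\theta_{\alpha_0}}$: verifying that the pointwise decreasing limit remains $\theta_{\alpha_0}$-psh is routine when $[\theta_{\alpha_0}]$ is big, but requires care when $\alpha_0 \in \partial \Box_L$ and $[\theta_{\alpha_0}]$ is only pseudoeffective, and here the membership $\alpha_0 \in \Box_L$ is exactly what keeps $V_{\theta_{\alpha_0}}$ locally integrable so that the passage to the current limit goes through.
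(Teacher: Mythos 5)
Your proposal is correct, but it takes a genuinely different route from the paper's. The paper perturbs within the family $\{\theta_\beta\}$ itself: Lemma \ref{lem:V_alpha_lim} shows that the normalized envelope $V_\beta/(1-|\beta|)$ is monotone in $\beta$ for the componentwise order (this uses that $\theta_1,\dots,\theta_r$ are positive forms) and continuous from above as $\beta\downarrow\alpha$, and the proof of Proposition \ref{prop:usc} then dominates $V_\alpha$ by $V_{\alpha^0+\varepsilon}$ for $\alpha\leq\alpha^0+\varepsilon$, with a separate (trivial) case at $|\alpha^0|=1$ where $\theta_{\alpha^0}$ is K\"ahler and $V_{\alpha^0}\equiv 0$. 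You instead dominate $\theta_{\alpha_k}$ by $\theta_{\alpha_0}+\epsilon_k T$ for an auxiliary K\"ahler form $T$, and replace Lemma \ref{lem:V_alpha_lim} by the continuity-from-above statement $V_{\theta_{\alpha_0}+\epsilon T}\searrow V_{\theta_{\alpha_0}}$; your justification of that step is sound (the decreasing limit $V_\infty$ satisfies $V_\infty\geq V_{\theta_{\alpha_0}}$, hence is not identically $-\infty$, is $(\theta_{\alpha_0}+\epsilon_0 T)$-plurisubharmonic for every $\epsilon_0>0$ and non-positive, hence is a competitor for $V_{\theta_{\alpha_0}}$), and the concluding three-step scheme (monotonicity, usc of a single fixed envelope, passage to the limit in the perturbation parameter) is the same as the paper's in spirit. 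What your version buys is robustness: it uses only the continuity of $\alpha\mapsto\theta_\alpha$ as smooth forms and the pseudo-effectivity of $[\theta_{\alpha_0}]$, so it needs neither the normalization by $1-|\alpha|$, nor the positivity of $\theta_1,\dots,\theta_r$, nor the boundary case distinction; what the paper's version buys is Lemma \ref{lem:V_alpha_lim} itself, a monotonicity statement specific to this product structure. The one point you should state explicitly is the upper semi-continuity of $V_{\theta'}$ for a smooth form $\theta'$ (i.e.\ that $V_{\theta'}$ equals its usc regularization, since the regularization is again a non-positive $\theta'$-plurisubharmonic competitor); the paper relies on this silently as well.
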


From this proposition and compactness of $\Box_L$, a standard argument shows (ii) and (iii) of Theorem \ref{thm:P_bdl_main}. 
In this subsection, we write $\alpha\leq \beta$ when $\alpha_\lambda\leq \beta_\lambda$ for every $\lambda$.
To prove Proposition \ref{prop:usc}, we need the following lemma.

\begin{lemma}\label{lem:V_alpha_lim}
	Let $\alpha$ and $\beta$ be points in $\Box_L$.\\
	$(i)$ If $\alpha\leq \beta$, $\textstyle\frac{V_\alpha}{1-|\alpha|}\leq \textstyle\frac{V_\beta}{1-|\beta|}$. \\
	$(ii)$ $\textstyle\lim_{\beta\downarrow\alpha}\textstyle\frac{V_\beta}{1-|\beta|}=\textstyle\frac{V_\alpha}{1-|\alpha|}$, where $\textstyle\lim_{\beta\downarrow\alpha}$ means the limit as $\beta$ approaches to $\alpha$ under the condition $\alpha\leq\beta$. 
\end{lemma}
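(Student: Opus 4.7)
The plan is to reduce both parts of Lemma~3.7 to monotonicity and continuity of the envelope $V_\theta$ in the background form $\theta$. Set $\alpha_{r+1}:=1-|\alpha|$ and $\beta_{r+1}:=1-|\beta|$ (taken positive; the boundary case $|\alpha|=1$ is absorbed by the context in which the lemma is used), and introduce the rescaled form $\widetilde{\theta}_\alpha:=\theta_\alpha/\alpha_{r+1}=\theta_{r+1}+\sum_{\lambda=1}^{r}(\alpha_\lambda/\alpha_{r+1})\,\theta_\lambda$, and likewise $\widetilde{\theta}_\beta$. From the scaling identity ${\rm PSH}(Y,c\theta)=c\cdot{\rm PSH}(Y,\theta)$ for $c>0$ one deduces $V_{c\theta}=cV_\theta$, and hence $V_\alpha/(1-|\alpha|)=V_{\widetilde{\theta}_\alpha}$. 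The lemma thus reduces to $(i')$ $V_{\widetilde{\theta}_\alpha}\le V_{\widetilde{\theta}_\beta}$ whenever $\alpha\le\beta$, and $(ii')$ $V_{\widetilde{\theta}_\beta}\downarrow V_{\widetilde{\theta}_\alpha}$ as $\beta\downarrow\alpha$.

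For $(i')$, if $\alpha\le\beta$ then $\alpha_\lambda/\alpha_{r+1}\le\beta_\lambda/\beta_{r+1}$ for each $\lambda\le r$ (numerators increase, denominators decrease). Since $\theta_\lambda>0$ for $\lambda\le r$ by the ampleness of $M_1,\ldots,M_r$, this yields $\widetilde{\theta}_\alpha\le\widetilde{\theta}_\beta$; hence ${\rm PSH}(Y,\widetilde{\theta}_\alpha)\subseteq{\rm PSH}(Y,\widetilde{\theta}_\beta)$, and $(i')$ follows at once from the definition of the envelope.

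For $(ii')$, by $(i')$ the pointwise decreasing limit $W:=\lim_{\beta\downarrow\alpha}V_{\widetilde{\theta}_\beta}$ exists and satisfies $V_{\widetilde{\theta}_\alpha}\le W\le 0$. It suffices to show $W\in{\rm PSH}(Y,\widetilde{\theta}_\alpha)$, since then $W$ competes in the envelope defining $V_{\widetilde{\theta}_\alpha}$ and the reverse inequality $W\le V_{\widetilde{\theta}_\alpha}$ follows. Fix a countable decreasing sequence $\beta_n\downarrow\alpha$ and work locally on a small coordinate ball, choosing smooth potentials $\phi_{\beta_n}$ for $\widetilde{\theta}_{\beta_n}$ and $\phi_\alpha$ for $\widetilde{\theta}_\alpha$ with $\phi_{\beta_n}\to\phi_\alpha$ in $C^\infty$ (possible from the smooth dependence of $\widetilde{\theta}_\beta$ on $\beta$ together with the local $i\partial\overline{\partial}$-Poincar\'{e} lemma). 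Then $u_n:=V_{\widetilde{\theta}_{\beta_n}}+\phi_{\beta_n}$ is plurisubharmonic, uniformly bounded above on compacts, and converges pointwise to $W+\phi_\alpha$; the convergence holds also in $L^1_{\rm loc}$ by dominated convergence (using $V_{\widetilde{\theta}_\alpha}\in L^1_{\rm loc}$, which holds because $\alpha\in\Box_L$ makes the corresponding class pseudo-effective, so $V_{\widetilde{\theta}_\alpha}$ is a nontrivial quasi-plurisubharmonic function). Since $W$ is a pointwise decreasing limit of upper semicontinuous functions, $W$ is upper semicontinuous and so is $W+\phi_\alpha$; therefore $W+\phi_\alpha$ equals its upper semicontinuous regularization, which is plurisubharmonic as the USC regularization of an $L^1_{\rm loc}$-limit of plurisubharmonic functions. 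Hence $W$ is $\widetilde{\theta}_\alpha$-plurisubharmonic, as required.

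The principal technical obstacle is that standard decreasing-limit theorems for quasi-plurisubharmonic functions are formulated for a \emph{fixed} background form, whereas here $\widetilde{\theta}_\beta$ varies with $\beta$. The device above is to absorb the variation into smoothly varying local potentials so as to work with honest plurisubharmonic functions $u_n$, and then to use the a priori upper semicontinuity of $W$ (from being a pointwise decreasing limit) in order to upgrade the usual ``PSH after USC-regularization'' conclusion to the stronger statement that $W$ itself lies in ${\rm PSH}(Y,\widetilde{\theta}_\alpha)$.
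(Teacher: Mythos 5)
Your proof is correct, and part $(i)$ is essentially the paper's argument in different clothing: the paper verifies directly that $\frac{1-|\beta|}{1-|\alpha|}V_\alpha$ is a non-positive $\theta_\beta$-plurisubharmonic competitor by writing $\varphi_\beta+\frac{1-|\beta|}{1-|\alpha|}V_\alpha$ as a non-negative combination of $\varphi_\alpha+V_\alpha$ and the $\varphi_\lambda$ ($\lambda\le r$), which is precisely your observation that $\widetilde{\theta}_\beta\ge\widetilde{\theta}_\alpha$ after dividing by $1-|\alpha|$, combined with the scaling $V_{c\theta}=cV_\theta$. For part $(ii)$ the two arguments diverge in execution though not in strategy: the paper folds everything into the single local weight $\varphi_{r+1}+\sum_\lambda\frac{\beta^{(\nu)}_\lambda}{1-|\beta^{(\nu)}|}\varphi_\lambda+\frac{V_{\beta^{(\nu)}}}{1-|\beta^{(\nu)}|}$, which is simultaneously plurisubharmonic and decreasing in $\nu$ (the latter using part $(i)$ and a choice of signs for the local weights $\varphi_\lambda$), so the standard decreasing-limit theorem applies at once; you instead separate the monotone envelope part from the non-monotone potential part $\phi_{\beta_n}-\phi_\alpha$, which converges uniformly, and recover plurisubharmonicity of the limit from upper semicontinuity plus $L^1_{\rm loc}$ stability. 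Your route costs a little more bookkeeping but avoids having to normalize the local potentials so that the combined weight is genuinely monotone. One step deserves tightening: the phrase ``plurisubharmonic as the USC regularization of an $L^1_{\rm loc}$-limit of plurisubharmonic functions'' only identifies $W+\phi_\alpha$ with a plurisubharmonic function almost everywhere, and two upper semicontinuous functions agreeing a.e.\ need not agree everywhere; the clean fix, already implicit in your setup, is to pass to the limit in the sub-mean-value inequality $u_n(x)\le\fint_{B(x,r)}u_n$ (monotone/dominated convergence on the right, using the integrable lower bound $V_{\widetilde{\theta}_\alpha}$), which together with the upper semicontinuity of $W$ gives plurisubharmonicity of $W+\phi_\alpha$ pointwise. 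With that adjustment the proof is complete and matches the paper's conclusion that the limit is a non-positive $\theta_\alpha$-plurisubharmonic competitor, hence bounded above by $V_\alpha$.
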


\begin{proof}
	$(i)$ Consider the local weights
	$\varphi_\alpha:=\textstyle\sum_{\lambda=1}^r\alpha_\lambda\varphi_\lambda + (1-|\alpha|)\varphi_{r+1}$.
	First, we use the equation
	\[
	\varphi_\beta+\frac{1-|\beta|}{1-|\alpha|}\cdot V_\alpha
	=\frac{1-|\beta|}{1-|\alpha|}\cdot(\varphi_\alpha+V_\alpha)+\sum_{\lambda=1}^r \left(\beta_\lambda-\frac{1-|\beta_\lambda|}{1-|\alpha_\lambda|}\cdot \alpha_\lambda\right)\cdot\varphi_\lambda.
	\]
	As the right-hand side is plurisubharmonic, we have that $\textstyle\frac{1-|\beta|}{1-|\alpha|}\cdot V_\alpha$ is $\theta_\beta$-plurisubharmonic.
	Since this function is non-positive, we obtain
	\[
	\frac{1-|\beta|}{1-|\alpha|}\cdot V_\alpha\leq V_\beta
	\]
	by the definition of $V_\beta$.
	
	$(ii)$ Take a sequence $\{\beta^{(\nu)}\}_{\nu=1}^\infty$, $\beta^{(\nu)} = (\beta^{(\nu)}_1, \beta^{(\nu)}_2, \dots, \beta^{(\nu)}_r)$, with $\beta^{(\nu)}_\lambda\downarrow\alpha_\lambda$ for every $\lambda=1, 2, \dots, r$.
	We shall prove
	$$
	\lim_{\nu\to \infty}\frac{V_{\beta^{(\nu)}}}{1-|\beta^{(\nu)}|}=\frac{V_\alpha}{1-|\alpha|}.
	$$
	By $(i)$, the inequality 
	$
	\lim_{\nu\to \infty}\textstyle\frac{V_{\beta^{(\nu)}}}{1-|\beta^{(\nu)}|}=\textstyle\frac{V_\alpha}{1-|\alpha|}
	$
	holds. Hence it is sufficient to prove the converse inequality.  
	
	Let us consider the local weight
	\[
	\frac{1}{1-|\beta^{(\nu)}|}\cdot (\varphi_{\beta^{(\nu)}}+V_{\beta^{(\nu)}})
	=\varphi_{r+1}+\sum_{\lambda=1}^r\frac{\beta^{(\nu)}_\lambda}{1-|\beta^{(\nu)}|}\cdot\varphi_\lambda+\frac{V_{\beta^{(\nu)}}}{1-|\beta^{(\nu)}|}.
	\]
	By the right-hand side, this weight is clearly decreasing in $\nu$. Moreover, by focusing on the left-hand side, we have that this weight is plurisubharmonic. 
	Therefore the limit
	$
	{\varphi_{\alpha}}/({1-|\alpha|})+
	\lim_{\nu\to \infty}{V_{\beta^{(\nu)}}}/({1-|\beta^{(\nu)}|})
	$
	is also plurisubharmonic. As the function $(1-|\alpha|)\cdot\lim_{\nu\to \infty}\textstyle\frac{V_{\beta^{(\nu)}}}{1-|\beta^{(\nu)}|}$ is non-positive and $\theta_\alpha$-plurisubharmonic, we have that
	$$
	(1-|\alpha|)\cdot \lim_{\nu\to \infty}\frac{V_{\beta^{(\nu)}}}{1-|\beta^{(\nu)}|}
	\leq V_\alpha.
	$$
\end{proof}

\begin{proof}[Proof of Proposition \ref{prop:usc}]
	Fix a point $(\alpha^0, y^0) \in \Box_L \times Y$.
	We shall prove the upper semicontinuity of $F$ at $(\alpha^0, y^0)$.
	 
	First, we treat the case where $|\alpha^0|=1$.	
	It is sufficient to prove
	\[
	\limsup_{\Box_L\times Y\ni(\alpha, y)\to (\alpha^0, y^0)}V_\alpha(y)\leq V_{\alpha^0}(y^0).
	\]
	Since the forms $\theta_1,\theta_2\ldots, \theta_r$ are positive, $\theta_{\alpha^0}$ is also positive. Thus $V_{\alpha^0}(y^0)=0$ and upper semicontinuity is trivial in this case. 
	
	Next, we treat the case when $|\alpha^0|<1$.
	In this case, there exists $\varepsilon>0$ such that  $\alpha^0+\varepsilon:=(\alpha^0_1+\varepsilon, \alpha^0_2+\varepsilon, \dots, \alpha^r+\varepsilon)$ lies in the interior of $\Box_L$. 
	Take a sufficiently small neighborhood $U_{\alpha_0, \varepsilon}$ in $\Box_L$ of $\alpha_0$ such that every point $\alpha$ in $U_{\alpha_0, \varepsilon}$ satisfies $\alpha\leq \alpha^0+\varepsilon$.
	It follows from Lemma \ref{lem:V_alpha_lim} $(i)$ that
	\[
	\limsup_{\Box_L\times Y\ni(\alpha, y)\to (\alpha^0, y^0)}\frac{V_\alpha(y)}{1-|\alpha|}=
	\limsup_{U_{\alpha_0, \varepsilon}\times Y\ni(\alpha, y)\to (\alpha^0, y^0)}\frac{V_\alpha(y)}{1-|\alpha|}\leq
	\limsup_{Y\ni y\to y^0}\frac{V_{\alpha^0+\varepsilon}(y)}{1-|\alpha^0+\varepsilon|}
	\]
	holds.
	By the upper semicontinuity of $\textstyle\frac{V_{\alpha^0+\varepsilon}}{1-|\alpha^0+\varepsilon|}$, we have
	\[
	\limsup_{\Box_L\times Y\ni(\alpha, y)\to (\alpha^0, y^0)}\frac{V_\alpha(y)}{1-|\alpha|}\leq
	\frac{V_{\alpha^0+\varepsilon}(y^0)}{1-|\alpha^0+\varepsilon|}.
	\]
	Letting $\epsilon \to 0$, we have 
	\[
	\limsup_{\Box_L\times Y\ni(\alpha, y)\to (\alpha^0, y^0)}\frac{V_\alpha(y)}{1-|\alpha|}\leq
	\frac{V_{\alpha^0}(y^0)}{1-|\alpha^0|},
	\]
	which shows the upper semicontinuity of the function $F(\alpha, y)/(1-|\alpha|)$ near $(\alpha^0, y^0)$. 
By multiplying by the continuous function $(\alpha, y)\mapsto 1-|\alpha|$, we have that $F$ itself is also upper semicontinuous. 
\end{proof}

\subsubsection{Integral formula}\label{subsection:integral}
In the following, we consider the local coordinates on $U_j^{(r+1)}$ defined as in \S 2.
Recall that we defined homogeneous fiber coordinates $[x_{j,1}:x_{j,2}: \cdots: x_{j,r+1}]$ on $U_j^{(r+1)}$. We define the fiber coordinate $z_1,\ldots, z_r$ on $U_j^{(r+1)}$ as $z_\lambda := {x_{j,\lambda}}/{x_{j,r+1}}$.
Then, we have $s_\lambda=z_\lambda$ ($1 \leq \lambda \leq r$) and $s_{r+1} =1$. We rewrite $\widehat{V}_B^\mathbb{Q}$ as follows:
\[\widehat{V}_B^\mathbb{Q} = \sup_{\substack{\ell_\lambda,  m \in \mathbb{Z} \\ \ell/m \in \Box_L}} \left[\frac{2\ell_1}{m}\log|z_1| + \cdots + \frac{2\ell_{r}}{m}\log|z_{r}| + (\varphi_{\ell/m})_B(y)\right],\]
where $\varphi_{\ell/m}$ is defined by $(\ell_1/m)\varphi_1 +(\ell_2/m)\varphi_2 + \cdots + (\ell_r/m)\varphi_{r} + (1-(\ell_1+\ell_2+\cdots + \ell_{r})/m)\varphi_{r+1}$ and $\phi_B$ denotes a local weight corresponding to a function $V_{\phi,B}$.
Here we regard $z_\lambda$ as a holomorphic function on $U_j^{(r+1)}$.

Define a volume form $dV$ on $X$ by setting
\[dV:= \frac{\pi^*\omega^n}{n!} \wedge \frac{e^{\varphi_1+\cdots+\varphi_{r+1}}}{(|z_1|^2e^{\varphi_1}+\cdots+|z_r|^2e^{\varphi_r}+e^{\varphi_{r+1}})^{r+1}} (\sqrt{-1})^{r}dz_1\wedge d\overline{z}_1 \wedge \ldots \wedge dz_{r} \wedge d\overline{z}_{r}\]
on $U_{j}^{(r+1)}$.
Here, $\omega$ is a fixed K\"{a}hler form on $Y$ as in \S \ref{subsection: outline}.
A simple calculation shows that this form extends to a smooth volume form on $X$.

In the proof of Theorem \ref{thm:P_bdl_main}, we need the following integral formula. Here we integrate a function $F=|z_1|^{2t_1}\cdots |z_{r}|^{2t_{r}} e^{-\varphi_{L}} = \displaystyle\frac{|z_1|^{2t_1}\cdots |z_{r}|^{2t_{r}}}{|z_1|^2e^{\varphi_1}+\cdots+|z_r|^2e^{\varphi_r}+e^{\varphi_{r+1}}}$ by using the volume form $dV$.

\begin{lemma}\label{lem:integration}
	\[\int_{(z_1,\ldots,z_{r}) \in \mathbb{C}^{r}} \frac{|z_1|^{2t_1}\cdots |z_{r}|^{2t_{r}} \cdot e^{\varphi_1+\cdots+\varphi_{r+1}}}{(|z_1|^2e^{\varphi_1}+\cdots+|z_r|^2e^{\varphi_r}+e^{\varphi_{r+1}})^{r+2}}(\sqrt{-1})^{r}dz_1\wedge d\overline{z}_1 \wedge \ldots \wedge dz_{r} \wedge d\overline{z}_{r} \]
	\[=(2\pi)^{r}\frac{\Gamma(1+t_1)\cdots\ \Gamma(1+t_{r})\Gamma(2-(t_1+\cdots+t_{r}))}{\Gamma(r+2)e^{\varphi_t}}.\]
\end{lemma}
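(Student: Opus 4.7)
The plan is to reduce the integral to a classical Dirichlet-type beta integral by an affine rescaling in each $z_\lambda$ variable that trivializes the weights $\varphi_\lambda$.

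First I would make the substitution $w_\lambda := z_\lambda\,e^{(\varphi_\lambda - \varphi_{r+1})/2}$ for $\lambda = 1,\dots,r$ (treating $\varphi_1,\dots,\varphi_{r+1}$ as constants with respect to the fibre variables, since they are pulled back from $Y$). Under this change, the denominator becomes
\[
|z_1|^2 e^{\varphi_1} + \cdots + |z_r|^2 e^{\varphi_r} + e^{\varphi_{r+1}} = e^{\varphi_{r+1}}\bigl(1 + |w_1|^2 + \cdots + |w_r|^2\bigr),
\]
and a straightforward bookkeeping of the factors $|z_\lambda|^{2t_\lambda}$, of $e^{\varphi_1+\cdots+\varphi_{r+1}}$, and of the Jacobian $dz_\lambda\wedge d\bar z_\lambda = e^{-(\varphi_\lambda - \varphi_{r+1})}\,dw_\lambda\wedge d\bar w_\lambda$ collapses all the $\varphi_\lambda$-exponents into the single factor $e^{-\varphi_t}$, where $\varphi_t := \sum_{\lambda=1}^r t_\lambda \varphi_\lambda + \bigl(1 - \sum_\lambda t_\lambda\bigr)\varphi_{r+1}$. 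This is the only nontrivial bookkeeping step; one has to keep track that the exponent of $\varphi_{r+1}$ ends up as $(r+1+\sum t_\lambda)-(r+2) = \sum t_\lambda - 1$, which combines with the exponents $-t_\lambda$ on the $\varphi_\lambda$'s to form exactly $-\varphi_t$.

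Next, with the weights removed, the remaining integral is
\[
\int_{\mathbb{C}^r}\frac{\prod_\lambda |w_\lambda|^{2t_\lambda}}{\bigl(1+\sum_\lambda |w_\lambda|^2\bigr)^{r+2}}\,(\sqrt{-1})^r dw_1\wedge d\bar w_1 \wedge \cdots \wedge dw_r\wedge d\bar w_r.
\]
Introducing polar coordinates $w_\lambda = \rho_\lambda e^{i\theta_\lambda}$ yields the factor $(2\pi)^r$ from the angular integration, and then the substitution $u_\lambda = \rho_\lambda^2$ turns the remaining integral into the Dirichlet-type integral
\[
\int_{\mathbb{R}_{\geq 0}^r}\frac{\prod_\lambda u_\lambda^{t_\lambda}}{\bigl(1+\sum_\lambda u_\lambda\bigr)^{r+2}}\,du_1\cdots du_r.
\]

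Finally, I would invoke the standard Dirichlet beta identity
\[
\int_{\mathbb{R}_{\geq 0}^r}\frac{\prod_\lambda u_\lambda^{a_\lambda - 1}}{\bigl(1+\sum_\lambda u_\lambda\bigr)^{b}}\,du_1\cdots du_r = \frac{\prod_\lambda \Gamma(a_\lambda)\,\Gamma\bigl(b - \sum_\lambda a_\lambda\bigr)}{\Gamma(b)},
\]
valid for $a_\lambda > 0$ and $b > \sum_\lambda a_\lambda$, applied with $a_\lambda = t_\lambda + 1$ and $b = r+2$; this produces exactly $\Gamma(1+t_1)\cdots\Gamma(1+t_r)\Gamma(2-(t_1+\cdots+t_r))/\Gamma(r+2)$, and multiplying by $(2\pi)^r e^{-\varphi_t}$ gives the claimed equality. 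The main obstacle, such as it is, is purely the exponent bookkeeping in the first step; the analytic content reduces to the well-known Dirichlet/beta evaluation, which can either be proved by induction on $r$ or cited directly.
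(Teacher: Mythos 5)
Your proof is correct, and the bookkeeping in the first step checks out: the coefficient of $\varphi_\lambda$ ($\lambda\leq r$) is $1-t_\lambda-1=-t_\lambda$ and that of $\varphi_{r+1}$ is $1+\sum t_\lambda+r-(r+2)=\sum t_\lambda-1$, so the weights do collapse to $e^{-\varphi_t}$. Your route differs from the paper's in how the resulting weight-free integral is evaluated. The paper keeps the weights inside the coordinate change: it passes to moduli $z_\lambda=a_\lambda e^{i\theta_\lambda}$ and then uses an $r$-dimensional spherical polar substitution $a_\lambda=\sqrt{A_{r+1}/A_\lambda}\cdot s\cdot(\text{trigonometric factors})$, which simultaneously extracts $e^{-\varphi_t}$ and reduces the integral to one radial beta integral in $s$ and a chain of angular beta integrals in $\theta_1,\dots,\theta_{r-1}$, each evaluated by the formulas \cite[5.12.1--5.12.3]{OLBC}; this requires computing the spherical Jacobian $s^{r-1}(\sin\theta_1)^{r-2}\cdots(\sin\theta_{r-2})$. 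You instead decouple the two tasks: an affine rescaling first removes the weights entirely, and then independent polar coordinates in each $w_\lambda$ plus the substitution $u_\lambda=\rho_\lambda^2$ reduce everything to the single multivariate Dirichlet--Liouville identity with $a_\lambda=1+t_\lambda$, $b=r+2$ (convergent since $\sum t_\lambda\leq 1<2$ in the application). Your version buys a cleaner separation of the exponent bookkeeping from the analytic evaluation and avoids the spherical Jacobian, at the cost of citing (or proving by induction on $r$, which is routine) the Dirichlet identity rather than only one-dimensional beta integrals; the paper's version is more self-contained in that it only ever integrates in one variable at a time. Either way the content is the same beta-function computation.
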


\begin{proof}
	To make ideas clear, we first prove the case that $r=2$.
	In this case, the equation we want to prove is as follows:	
	\[\int_{(z_1,z_2) \in \mathbb{C}^2} \frac{|z_1|^{2t_1} |z_2|^{2t_2} \cdot e^{\phi_1+\phi_2+\phi_3}}{(|z_1|^2e^{\phi_1}+|z_2|^2e^{\phi_2}+e^{\phi_3})^{4}}(\sqrt{-1})^2dz_1\wedge d\overline{z}_1 \wedge dz_2 \wedge d\overline{z}_2 \]
	\[=(2\pi)^2\frac{\Gamma(1+t_1) \Gamma(1+t_2)\Gamma(2-(t_1+t_2))}{\Gamma(4)e^{\phi_t}},\]
	where $\phi_t:=t_1\phi_1 +t_2\phi_2 + (1-(t_1+t_2))\phi_3$.
	Write $z_1=a_1e^{i\theta_1}$ and $z_2=a_2e^{i\theta_2}$ and define $A_\lambda$ by $A_\lambda:=e^{\phi_\lambda}$ ($\lambda = 1,2,3$).
	Then we have 
	\begin{align*}
	&\int_{\mathbb{C}^{2}} \frac{|z_1|^{2t_1}|z_2|^{2t_2} \cdot A_1A_2A_3 }{(|z_1|^2 A_1 + |z_2|^2 A_2 +A_3)^{4}} (\sqrt{-1})^2dz_1\wedge d\overline{z}_1 \wedge dz_{2} \wedge d\overline{z}_{2}\\
	=&(2\pi)^{2}2^{2} \int_{\mathbb{R}^{2}_{>0}} \frac{a_1^{1+2t_1} a_{2}^{1+2t_{2}} A_1 A_2 A_3}{(a^2_1 A_1+ a^2_{2}A_{2}+A_3)^{4}}da_1da_{2}.
	\end{align*}
	Next, we use the following polar coordinates $(s, \theta)$:
	\begin{align*}
	s &:= \left(\frac{A_1}{A_3}a_1^2 + \frac{A_2}{A_3}a_2^2\right)^{1/2} \in \mathbb{R}_{\geq 0},\\
	\theta &:= {\rm Arctan} \left(\frac{\sqrt{A_2}a_2}{\sqrt{A_1}a_1}\right) \in [0, \pi/2].
	\end{align*}
	Note that these coordinates are written as $a_1 = \sqrt{A_3/A_1} \cdot s \cos \theta$ and $a_2 = \sqrt{A_3/A_2} \cdot s \sin \theta$.
	Then we have 
	\begin{align*}
	& \int_{\mathbb{R}^{2}_{>0}} \frac{a_1^{1+2t_1} a_{2}^{1+2t_{2}} A_1 A_2 A_3}{(a^2_1 A_1+ a^2_{2}A_{2}+A_3)^{4}}da_1da_{2}\\
	= &A_1^{-t_1}A_2^{-t_2}A_3^{t_1+t_2-1} \int_{s=0}^\infty \frac{s^{3+2t_1+2t_2}}{(s^2+1)^4}ds \int_{\theta=0}^{\pi/2} (\cos \theta)^{1+2t_1} (\sin \theta)^{1+2t_2} d\theta.
	\end{align*}
	We denote this value by $I$.
	We will calculate the integration in $s$ and $\theta$ respectively.
	
	First we consider the integration in $s$.
	To compute, we use the substitution $\sigma =s^2$. Then,
	\[ 
	\int_{s=0}^\infty \frac{s^{3+2t_1+2t_2}}{(s^2+1)^4}ds
	=\frac{1}{2}\int_{\sigma = 0}^{\infty} \frac{\sigma^{1+t_1+t_2}}{(\sigma+1)^4} d\sigma
	=\frac{1}{2}B(2+t_1+t_2,2-t_1-t_2).
	\]
	At the last equality, we use the formula \cite[5.12.3]{OLBC} for the beta function.
	
	Next, we consider the integration in $\theta$. By the formula \cite[5.12.2]{OLBC}, we have
	\[\int_{\theta=0}^{\pi/2} (\cos \theta)^{1+2t_1} (\sin \theta)^{1+2t_2} d\theta = \frac{1}{2}B(1+t_1,1+t_2).\]
	By \cite[5.12.1]{OLBC}, we have
	\[(2\pi)^{2}2^{2} I = (2\pi)^2A_1^{-t_1}A_2^{-t_2}A_3^{t_1+t_2-1} \frac{\Gamma(2-t_1-t_2)\Gamma(1+t_1)\Gamma(1+t_2)}{\Gamma(4)}.\]
	The proof in the case that $r=2$ is complete.
	
	Now we will prove the theorem in the general case.
	Since the proof is almost the same, we only explain the essential points.
	We use the coordinate change $z_\lambda=a_\lambda e^{i\theta_\lambda}$ and get the expression of $a_1,\ldots,a_r$. Then we use the $r$-dimensional polar coordinate
	\begin{align*}
	a_1 &= \sqrt{A_{r+1}/A_1} \cdot s \cos \theta_1\\
	a_2 &= \sqrt{A_{r+1}/A_2} \cdot s \sin \theta_1 \cos \theta_2\\
	a_3 &= \sqrt{A_{r+1}/A_3} \cdot s \sin \theta_1 \sin \theta_2 \cos \theta_3\\
	\vdots &\\
	a_{r-1} &= \sqrt{A_{r+1}/A_{r-1}} \cdot s \sin \theta_1 \sin \theta_2 \sin \theta_3 \cdots \sin \theta_{r-2} \cos \theta_{r-1}\\
	a_{r} &= \sqrt{A_{r+1}/A_{r}} \cdot s \sin \theta_1 \sin \theta_2 \sin \theta_3 \cdots \sin \theta_{r-2} \sin \theta_{r-1},\\
	\end{align*}
	where $A_\lambda = e^{\varphi_\lambda}$.
	The determinant of the Jacobian matrix is written as
	\[\sqrt{\frac{A_{r+1}}{A_1}\cdots \frac{A_{r+1}}{A_{r}}}s^{r-1}(\sin\theta_1)^{r-2}(\sin\theta_2)^{r-3} \cdots (\sin \theta_{r-2})^1.\]
	Finally we use formulae of the beta function to deduce the conclusion.	
\end{proof}

\subsubsection{Proof of Theorem \ref{thm:P_bdl_main} $(iv)$}
As we described in \S \ref{subsection: outline}, we shall prove
$V_{\theta_L,B} \leq \widehat{V}^{\mathbb{Q}}_B+C$.
Fix a global section $F \in H^0(X,L^m)$ with
$$\int_{X}|F|^2_{h^m_L}dV = 1.$$ 
We will decompose $F$ into orthogonal components using the following claim.
\begin{claim}
	The direct decomposition of $H^0(X,L^m)$ induced by the isomorphisms $H^0(X,L^m) = H^0(X,\mathcal{O}_{X}(m)) = H^0(Y, S^m (M_1 \otimes \cdots \otimes M_{r+1}) ) \cong \textstyle\bigoplus_{\ell_1+\cdots+\ell_{r+1} = m} H^0(Y,M_1^{\ell_1} \otimes \cdots \otimes M_{r+1}^{\ell_{r+1}} )$ is the orthogonal decomposition with respect to the $L^2$-norm defined by Hermitian metric $h^m_{L}$ of $L^{m}$ and the volume form $dV$. Here, $S^m E$ denotes the $m$-th symmetric tensor of $E$.
\end{claim}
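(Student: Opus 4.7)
The plan is to prove orthogonality by exploiting the natural $T^r$-symmetry of both the Fubini--Study-type weight $\varphi_L$ and the volume form $dV$ constructed in \S 3.3.3. On each chart $U_j^{(r+1)}$ with coordinates $(y, z_1, \ldots, z_r)$, the compact torus $T^r = (S^1)^r$ acts by
\[
(e^{i\theta_1}, \ldots, e^{i\theta_r}) \cdot (y, z_1, \ldots, z_r) = (y,\, e^{i\theta_1} z_1, \ldots, e^{i\theta_r} z_r).
\]
These local actions glue to a global $T^r$-action on $X$ because the transition functions of $\mathbb{P}(M_1 \oplus \cdots \oplus M_{r+1})$ preserve the direct sum decomposition of the fibers. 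Since the local weight $\varphi_L = \log(|z_1|^2 e^{\varphi_1} + \cdots + |z_r|^2 e^{\varphi_r} + e^{\varphi_{r+1}})$ and the fiber density of $dV$ depend only on $|z_1|^2, \ldots, |z_r|^2$, both $h_L^m$ and $dV$ are $T^r$-invariant.

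Next I would carry out the following steps. First, for $F = \sum_{|\ell|=m} F_\ell \in H^0(X, L^m)$ decomposed as in the statement, I would identify the local expression of each $F_\ell$ on $U_j^{(r+1)}$. Trivializing $L^m$ by the nonvanishing section $x_{r+1}^m$ and trivializing $M_1^{\ell_1} \otimes \cdots \otimes M_{r+1}^{\ell_{r+1}}$ via the frame $s_{j,1}^{\ell_1} \cdots s_{j,r+1}^{\ell_{r+1}}$, the component $F_\ell$ corresponds to the local function $f_\ell(y) \cdot z_1^{\ell_1} \cdots z_r^{\ell_r}$ (the remaining factor $x_{r+1}^{\ell_{r+1}}$ is absorbed in the trivialization since $\ell_1 + \cdots + \ell_{r+1} = m$). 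In particular, $F_\ell$ transforms under the torus action by the character $e^{i(\ell_1 \theta_1 + \cdots + \ell_r \theta_r)}$. Second, for $\ell \neq \ell'$ --- equivalently $(\ell_1,\ldots,\ell_r) \neq (\ell'_1,\ldots,\ell'_r)$, since $|\ell| = |\ell'| = m$ --- the cross term $\langle F_\ell, F_{\ell'} \rangle_{h_L^m} \, dV$ locally contains the factor
\[
z^{\ell} \overline{z}^{\ell'} = a_1^{\ell_1 + \ell'_1} \cdots a_r^{\ell_r + \ell'_r} \cdot e^{i \sum_\lambda (\ell_\lambda - \ell'_\lambda) \theta_\lambda}
\]
in polar coordinates $z_\lambda = a_\lambda e^{i\theta_\lambda}$, while every remaining factor is $T^r$-invariant. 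Third, angular integration over $[0, 2\pi]^r$ annihilates this expression because at least one $\ell_\lambda - \ell'_\lambda$ is nonzero. Since the complement of $U_j^{(r+1)}$ in each fiber has measure zero, Fubini's theorem yields $\int_X \langle F_\ell, F_{\ell'} \rangle_{h_L^m} \, dV = 0$, which is the desired orthogonality.

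The main bookkeeping step is verifying the local form of $F_\ell$ together with the simultaneous trivializations of $L$, of $M_1^{\ell_1} \otimes \cdots \otimes M_{r+1}^{\ell_{r+1}}$, and of the volume form used to define the $L^2$-pairing; once these are correctly matched, the remaining computation is a routine angular integration, and the $T^r$-invariance of $h_L$ and $dV$ is immediate from their explicit formulas.
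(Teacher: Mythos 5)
Your proposal is correct and follows essentially the same route as the paper: the paper also writes each component locally as $z_1^{\ell_1}\cdots z_r^{\ell_r}\,\pi^*f_\ell$ on $U_j^{(r+1)}$, observes that $e^{-m\varphi_L}$ and the fiber density of $dV$ depend only on $|z_1|,\dots,|z_r|$, and kills the cross terms by integrating in the angular variables $\theta_\lambda$. Your packaging of this rotation-invariance as a global $T^r$-action is a harmless reformulation of the same computation.
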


\begin{proof}
	By the decomposition above, we have injective morphisms $H^0(Y,M_1^{\ell_1} \otimes \cdots \otimes M_{r+1}^{\ell_{r+1}} ) \to H^0(X,L^m)$ for every $(r+1)$-tuple of non-negative integers $\ell = (\ell_1,\ldots,\ell_{r+1})$ with $\ell_1+\cdots+\ell_{r+1} = m$. In the following, $M_1^{\ell_1} \otimes \cdots \otimes M_{r+1}^{\ell_{r+1}}$ will be denoted by $M^\ell$.
	This morphism maps $f_\ell \in H^0(Y,M^\ell)$ to $s_1^{\ell_1} \cdot s_2^{\ell_2} \cdots s_{r+1}^{\ell_{r+1}} \pi^* f_\ell$.
	
	We will prove that, for any $\ell=(\ell_1,\ldots,\ell_{r+1})$ and $\ell'=(\ell'_1,\ldots,\ell'_{r+1})$ with $\ell \neq \ell'$, two sections $\beta := s_1^{\ell_1} \cdot s_2^{\ell_2} \cdots s_{r+1}^{\ell_{r+1}} \pi^* f_\ell$ and $\beta':=s_1^{\ell'_1} \cdot s_2^{\ell'_2} \cdots s_{r+1}^{\ell'_{r+1}} \pi^* f_{\ell'}$ are orthogonal.
	We regard $\beta$ and $\beta'$ as holomorphic functions via the local trivialization.
	Then, by the equations $s_\lambda = z_\lambda$ ($\lambda = 1,2,\ldots,r$) and $s_{r+1} = 1$, it follows that
	\begin{align*}
	&\int_{X} \langle \beta, \beta' \rangle_{h^m_L} dV
	= \int_{X} (z_1^{\ell_1} \cdot z_2^{\ell_2} \cdots z_{r}^{\ell_{r}} \pi^* f_\ell) \cdot (\overline{z_1^{\ell'_1} \cdot z_2^{\ell'_2} \cdots z_{r}^{\ell'_{r}} \pi^* f_{\ell'}}) e^{-m \varphi_L} dV\\
	=& \int_{y \in Y} \left[  \int_{z \in \pi^{-1}(y)}  F(z)  (\sqrt{-1})^rdz_1\wedge d\overline{z}_1 \wedge \ldots \wedge dz_{r} \wedge d\overline{z}_{r} \right] \frac{\omega^n}{n!},
	\end{align*}
	where $$F(z) = \frac{(z_1^{\ell_1} \cdots z_{r}^{\ell_{r}} \pi^* f_\ell) \overline{(z_1^{\ell'_1} \cdots z_{r}^{\ell'_{r}} \pi^* f_{\ell'})}e^{-m \varphi_{L}}e^{\varphi_1+\cdots+\varphi_{r+1}}}{(|z_1|^2e^{\varphi_1}+\cdots+|z_r|^2e^{\varphi_r}+e^{\varphi_{r+1}})^{r+1}}.$$
	Write $z_\lambda=s_\lambda r^{i\theta_\lambda}$.
	Considering integration in $\theta_\lambda$'s, we have that it becomes 0 if $\ell \neq \ell'$.
	Therefore, two sections are orthogonal for different $\ell$ and $\ell'$.
\end{proof}

Let us decompose $F \in H^0(X,L^m)$ into the sum of the components $\beta_\ell = s_1^{\ell_1} \cdot s_2^{\ell_2} \cdots$ $s_{r+1}^{\ell_{r+1}}$ $\cdot \pi^* f_\ell$, where $f_\ell \in H^0(Y,M_1^{\ell_1} \otimes \cdots \otimes M_{r+1}^{\ell_{r+1}} )$, according to the orthogonal decomposition obtained in the claim.
By the orthogonality, we have
\[\int_X |\beta_\ell|^2_{h^m_{L}} dV \leq \int_X |F|^2_{h^m_{L}} dV (=1).\]
Next we estimate the norm of $f_\ell$ .
\begin{claim}
	There exist constants $C_1$ and $C_2$ independent of $m$ such that 
	\[\int_{y \in Y} |f_\ell(y)|^2 e^{-m(\varphi_{\ell/m})} \frac{\omega^n}{n!} \leq C_1 C_2^m \int_{X} |\beta_\ell|^2 e^{-m\varphi_{L}} dV,\]
	where $\varphi_{\ell/m}$ stands for $\textstyle\frac{\ell_1}{m}\varphi_1 +\textstyle\frac{\ell_2}{m}\varphi_2 + \cdots + \textstyle\frac{\ell_{r}}{m}\varphi_{r} + (1-\textstyle\frac{\ell_1+\cdots + \ell_{r}}{m})\varphi_{r+1}$ as in \S \ref{subsection:integral}.
\end{claim}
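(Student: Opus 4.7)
The plan is to compute the right-hand side of the desired inequality in closed form by fiber integration, and then reduce the inequality to a combinatorial bound on multinomial coefficients. On the chart $U_j^{(r+1)}$ we have $s_\lambda=z_\lambda$ for $\lambda=1,\ldots,r$, $s_{r+1}=1$, and the local weight of $h_L$ is the Fubini--Study-type expression $\varphi_L=\log(|z_1|^2 e^{\varphi_1}+\cdots+|z_r|^2 e^{\varphi_r}+e^{\varphi_{r+1}})$. Combined with the definition of $dV$ from \S 3.3.3, the integrand $|\beta_\ell|^2 e^{-m\varphi_L}\,dV$ on $\pi^{-1}(U_j)$ has the shape $|f_\ell(y)|^2\cdot \frac{|z_1|^{2\ell_1}\cdots |z_r|^{2\ell_r}\, e^{\varphi_1+\cdots+\varphi_{r+1}}}{(|z_1|^2 e^{\varphi_1}+\cdots+e^{\varphi_{r+1}})^{m+r+1}}$ times $(\sqrt{-1})^r dz_1\wedge d\overline{z}_1\wedge\cdots\wedge dz_r\wedge d\overline{z}_r$ and $\omega^n/n!$. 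Since the complement $X\setminus\bigcup_j U_j^{(r+1)}$ has measure zero, Fubini reduces the total integral over $X$ to an integral over $Y$ of $|f_\ell|^2$ against a fiber integral on $\mathbb{C}^r$.

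Next, I would evaluate this fiber integral by directly generalizing Lemma \ref{lem:integration}. The lemma is stated for exponent $r+2$, but its proof extends verbatim to arbitrary exponent $m+r+1$ and integer monomial powers $\ell_\lambda$: the same polar substitution $z_\lambda=a_\lambda e^{i\theta_\lambda}$, followed by $a_\lambda=\sqrt{A_{r+1}/A_\lambda}\cdot b_\lambda$ with $A_\lambda=e^{\varphi_\lambda}$, and $r$-dimensional spherical (Dirichlet) coordinates in the $b_\lambda$, factors out an $A$-contribution of exactly $\prod_{\lambda=1}^{r+1} A_\lambda^{-\ell_\lambda}=e^{-m\varphi_{\ell/m}}$ (with $\ell_{r+1}:=m-(\ell_1+\cdots+\ell_r)$) and reduces the remaining integral to a product of beta functions. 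The resulting closed form is
\[(2\pi)^r\cdot\frac{\ell_1!\,\ell_2!\cdots\ell_{r+1}!}{(m+r)!}\cdot e^{-m\varphi_{\ell/m}(y)},\]
yielding the \emph{equality}
\[\int_X |\beta_\ell|^2 e^{-m\varphi_L}\,dV = (2\pi)^r\cdot\frac{\ell_1!\cdots\ell_{r+1}!}{(m+r)!}\int_Y |f_\ell(y)|^2 e^{-m\varphi_{\ell/m}}\frac{\omega^n}{n!}.\]

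Finally, the claim reduces to the bound $\frac{(m+r)!}{\ell_1!\cdots\ell_{r+1}!}\le C_1 C_2^m$ uniformly in $\ell$ with $\ell_1+\cdots+\ell_{r+1}=m$. Since this ratio equals $r!\binom{m+r}{r}\binom{m}{\ell_1,\ldots,\ell_{r+1}}$, the multinomial coefficient is at most $(r+1)^m$ while $r!\binom{m+r}{r}\le(m+r)^r$ is polynomial in $m$; choosing any $C_2>r+1$ absorbs the polynomial prefactor into a multiplicative constant $C_1$ and produces the required estimate. The essentially mechanical verification of the fiber integral is the only real computation; everything else is formal manipulation, so I expect no genuine obstacle beyond carefully redoing the Dirichlet-type integral of Lemma \ref{lem:integration} with the larger exponent.
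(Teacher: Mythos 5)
Your proposal is correct, and it takes a genuinely different route from the paper. The paper avoids computing the fiber integral with the large exponent $m+r+1$: it instead applies H\"older's inequality to $\int_{\pi^{-1}(y)}|z^\ell|^{2/m}e^{-\varphi_L}\,dP$, which reduces everything to the single fixed-exponent formula of Lemma \ref{lem:integration} evaluated at the fractional powers $t_\lambda=\ell_\lambda/m\in[0,1]$, and then bounds $\Gamma(t)$ from below on $[1,2]$ to get the $C_1C_2^m$ loss (with $C_2$ built from $\Gamma(r+2)$, the fiber volume $I=\int dP$, and the lower bound on $\Gamma$). You instead evaluate $\int_{\pi^{-1}(y)}|z^\ell|^2e^{-m\varphi_L}\,dP$ exactly; I checked your closed form: the substitution $a_\lambda=\sqrt{A_{r+1}/A_\lambda}\,b_\lambda$ produces the factor $\prod_{\lambda=1}^{r+1}A_\lambda^{-\ell_\lambda}=e^{-m\varphi_{\ell/m}}$ (the exponent of $A_{r+1}$ works out to $r+\sum_{\lambda\le r}\ell_\lambda+1-(m+r+1)=-\ell_{r+1}$), and the remaining Dirichlet integral gives $(2\pi)^r\ell_1!\cdots\ell_{r+1}!/(m+r)!$, so your equality is right and the claim reduces to $\frac{(m+r)!}{\ell_1!\cdots\ell_{r+1}!}\le C_1C_2^m$ uniformly in $\ell$, which your multinomial bound $\binom{m}{\ell_1,\ldots,\ell_{r+1}}\le(r+1)^m$ together with the polynomial factor $r!\binom{m+r}{r}$ settles. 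What each approach buys: the paper's H\"older trick reuses one integral formula and needs no new computation, at the cost of an opaque constant; your exact identity is more informative --- it exhibits the precise $L^2$-norm of each orthogonal component $\beta_\ell$ and shows that the exponential loss $C_2^m$ is genuinely forced by the central multinomial coefficients rather than being an artifact of the estimate. The only point to make explicit in a write-up is the convergence condition $\ell_{r+1}=m-\sum_{\lambda\le r}\ell_\lambda\ge 0$ for the generalized integral formula, which holds automatically here.
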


\begin{proof}
	Let
	$z^\ell:= z_1^{\ell_1} \cdot z_2^{\ell_2} \cdots z_{r}^{\ell_{r}} $. 
	Then we have $z^\ell \pi^* f_\ell = \beta_\ell$ under the trivialization.
	We estimate the right-hand side from below. We have that
	\begin{align*}
	\int_{X} |\beta_\ell|^2 e^{-m\varphi_{L}} dV = \int_{X} |z^\ell f_\ell(x)|^2 e^{-m\varphi_{L}} dV =\int_{y \in Y} |f_\ell(x)|^2 \left[ \int_{z \in \pi^{-1}(y)} |z^\ell|^{2} e^{-m\varphi_{L}}dP\right] \frac{\omega^n}{n!},
	\end{align*}
	where $dP$ is the measure on the fiber $\pi^{-1}(y)$ defined as  $$dP:=\displaystyle\frac{e^{\varphi_1+\cdots+\varphi_{r+1}}}{(|z_1|^2e^{\varphi_1}+\cdots+|z_r|^2e^{\varphi_r}+e^{\varphi_{r+1}})^{r+1}}(\sqrt{-1})^rdz_1\wedge d\overline{z}_1 \wedge \ldots \wedge dz_{r} \wedge d\overline{z}_{r}.$$
	By H\"{o}lder's inequality, we have that
	\[\int_{z \in\pi^{-1}(y)}|z^\ell|^{2/m} \cdot e^{-\varphi_{L}} dP \leq \left[\int_{\pi^{-1}(y)} |z^\ell|^{2}\cdot e^{-m \varphi_{L}} dP\right]^{\frac{1}{m}} \cdot \left[\int_{\pi^{-1}(y)} 1 \cdot dP\right]^{\frac{m-1}{m}}.\]
	A straightforward computation similar to the proof of Lemma \ref{lem:integration} shows that the value of the integral $\textstyle\int_{\pi^{-1}(y)} dP$ in the right-hand side is a constant independent of $y$, which we will denote by $I$. 
	By Lemma \ref{lem:integration}, the integral in the left-hand side is equal to
	\[(2\pi)^{r}\frac{\Gamma\left(1+\frac{\ell_1}{m}\right)\cdots\Gamma\left(1+\frac{\ell_{r}}{m}\right)\Gamma\left(2-\left(\frac{\ell_1+\cdots+\ell_{r}}{m}\right)\right)}{\Gamma(r+2)e^{\varphi_\ell}}.\]
	As $\Gamma(t)$ is bounded for $1 \leq t \leq 2$ from below, there exists a positive constant $C>0$ such that $\Gamma(t) \geq C$ for $1\leq t \leq 2$.
	Combining these estimates, we have that
	\[(2\pi)^{r}\frac{C^{r+1}e^{-\varphi_\ell}}{\Gamma(r+2)} \leq \left[\int_{z \in \pi^{-1}(y)} |z^\ell|^{2} e^{-m\varphi_{L}}dP\right]^{\frac{1}{m}} \cdot I^{\frac{m-1}{m}}.\]
	Thus we obtain
	\begin{align*}
	\int_{X} |\beta_\ell|^2 e^{-m\varphi_{L}} dV =\,& \int_{y \in Y} |f_\ell(y)|^2 \left[ \int_{z \in \pi^{-1}(y)} |z^\ell|^{2} e^{-m\varphi_{L}}dP \right] \frac{\omega^n}{n!}\\
	\geq\, & \int_{y \in Y} |f_\ell(y)|^2 I^{-(m-1)}\left[(2\pi)^{r}\frac{C^{r+1}e^{-\varphi_\ell}}{\Gamma(r+2)}\right]^m \frac{\omega^n}{n!}\\
	=\, & \frac{(2\pi)^{rm}C^{rm}I^{-(m-1)}}{\Gamma(r+2)^m}\int_{y \in Y} |f_\ell(y)|^2 e^{-m\varphi_\ell} \frac{\omega^n}{n!}.
	\end{align*}
\end{proof}

By the previous estimates and the definition of the Bergman-type metric, we have
\[\frac{1}{m}\log|f_\ell|^2 \leq (\varphi_\ell)_B + \frac{\log C_1 + m\log C_2}{m}.\]

To prove the desired inequality $V_{\theta_L,B} \leq \widehat{V}^\mathbb{Q}_B + C$, we will estimate the norm of the section $F = \textstyle\sum_{\ell} z^\ell \pi^*f_\ell$ from above. 
Assume $\ell=(\ell_1,\ldots,\ell_{r})$ satisfies $\ell/m \in  \Box_L$.
By the argument as in the proof of \cite[Proposition 2.5(2)]{K2}, we have
\[\frac{2}{m} \log |F| \leq \max_\ell \left[\frac{2\ell_1}{m}\log|z_1| + \cdots + \frac{2\ell_{r}}{m}\log|z_{r}| + (\phi_\ell/m)_B\right] + \frac{1}{m}\log C_1 + \log C_2.\]
Let $C:= \log C_1 + \log C_2$. Then the right-hand side is estimated by $\widehat{V}_B^\mathbb{Q} + C$ by definition.
Since $m$ is arbitrary, the supremum of the left-hand side over $m$ and $F$ is $V_{\theta_L,B}$, and the proof is complete.


\section{Proof of the main results}\label{section:prf}

In this section, we prove Theorem \ref{cor:ab} and Theorem \ref{thm:main}. 
Let $X, Y$, and $L$ be those as in Theorem \ref{thm:main} (in particular, we assume that $Y$ admits a holomorphic tubular neighborhood).
The idea of the proof is based on \cite{K2}: we first construct a new ``projective {space} bundle model'' $(\widetilde{X}, \widetilde{Y}, \widetilde{L})$ from $(X, Y, L)$, and construct a metric of $L$ with minimal singularities by using the metric on $\widetilde{L}$  as in \S \ref{section:P_bdl}. 
See also \S 3.2 for the relation between the models $(X, Y, L)$ and $(\widetilde{X}, \widetilde{Y}, \widetilde{L})$. 

\subsection{The projective {space} bundle model $(\widetilde{X}, \widetilde{Y}, \widetilde{L})$}

Let $X, Y$, and $L$ be as in Theorem \ref{thm:main}. 
Denote by $\widetilde{X}$ the total space of the projective {space} bundle $\mathbb{P}(\mathbb{I}_Y\oplus N_{Y/X}^*)$, by $\widetilde{Y}$ the subvariety $\mathbb{P}(\mathbb{I}_Y)$, and by $\widetilde{L}$ the line bundle $\mathcal{O}_{\mathbb{P}(\mathbb{I}_Y\oplus N_{Y/X}^*)}(1)\otimes \pi^*L|_Y$, where $\pi\colon \widetilde{X}\to Y$ is the natural projection. 
Note that $\widetilde{X}={\bf P}(\mathbb{I}_Y\oplus N_{Y/X})$ and $\widetilde{Y}={\bf P}(\mathbb{I}_Y)$. 
It is easily observed that $\widetilde{X}$ is a compactification of the normal bundle $N_{Y/X}$, and from this point of view, $\widetilde{Y}$ is regarded as a zero section of $N_{Y/X}$. 
Therefore, by the assumption on the existence of a holomorphic tubular neighborhood, 
there exists a neighborhood $V$ of $Y$ in $X$,$\widetilde{V}$ of $\widetilde{Y}$ in $\widetilde{X}$ and a biholomorphic map $i\colon \widetilde{V}\cong V$ with $i|_{\widetilde{Y}}=\pi|_{\widetilde{Y}}$. 

\begin{proposition}\label{prop:L_vs_Ltilde}
If one choose $V$ sufficiently small, one have that $i^*L\cong\widetilde{L}$.
\end{proposition}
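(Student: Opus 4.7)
My plan is to first trivialize $\mathcal{O}_{\widetilde{X}}(1)$ on a small neighborhood of $\widetilde{Y}$ — reducing $\widetilde{L}$ to a pullback from $Y$ — and then to match $i^*L$ with the same pullback via a formal obstruction argument combined with Grauert's formal principle. As a first step, I would observe that $\widetilde{Y} = \mathbb{P}(\mathbb{I}_Y)$ is contained in the affine chart $U$ of $\widetilde{X} = \mathbb{P}(\mathbb{I}_Y \oplus N_{Y/X}^*)$ on which the $\mathbb{I}_Y$-component is non-zero. Because the fiber of $\mathcal{O}(1)$ at $[\xi]$ is $(\mathbb{I}_Y \oplus N_{Y/X}^*)/\ker \xi$, and because $1 \in \mathbb{I}_Y \subset \mathbb{I}_Y \oplus N_{Y/X}^*$ lies outside $\ker \xi$ for every $\xi$ with nonzero $\mathbb{I}_Y^*$-component, the image of the constant $1$ furnishes a nowhere-vanishing holomorphic section of $\mathcal{O}(1)|_U$, trivializing it. Shrinking $V$ so that $\widetilde{V} \subset U$, this gives $\widetilde{L}|_{\widetilde{V}} \cong \pi^*(L|_Y)|_{\widetilde{V}}$, and the proposition reduces to showing that the line bundle $M := i^*L \otimes \pi^*(L|_Y)^{-1}$ — which is trivial on $\widetilde{Y}$ because $i|_{\widetilde{Y}} = \pi|_{\widetilde{Y}}$ — is trivial on a neighborhood of $\widetilde{Y}$.

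Next I would produce a formal trivialization of $M$ along $\widetilde{Y}$ by inductively extending a trivializing section from the $k$-th to the $(k+1)$-th infinitesimal neighborhood. The obstruction at each stage lies in $H^1(Y, \mathrm{Sym}^{k+1} N_{Y/X}^*) = \bigoplus_{k_1 + \cdots + k_r = k+1} H^1(Y, N_1^{-k_1} \otimes \cdots \otimes N_r^{-k_r})$. For any such tuple with $\sum_\lambda k_\lambda \geq 1$, choose an index $\mu$ with $k_\mu \geq 1$ and factor $K_Y^{-1} \otimes N_1^{-k_1} \otimes \cdots \otimes N_r^{-k_r} = (K_Y^{-1} \otimes N_\mu^{-1}) \otimes \bigotimes_\lambda N_\lambda^{-k'_\lambda}$, where $k'_\lambda = k_\lambda$ for $\lambda \neq \mu$ and $k'_\mu = k_\mu - 1$. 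The first factor is positive by the hypothesis of Theorem \ref{thm:main}, and the remaining factor is a tensor of non-negative powers of the positive line bundles $N_\lambda^{-1}$, so the product is positive. Kodaira vanishing then kills every obstruction, and a formal trivialization of $M$ exists to all orders.

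The final step — and what I expect to be the main obstacle — is promoting this formal trivialization to a genuine holomorphic one. Since every $N_\lambda$ is negative, $\widetilde{Y}$ is an exceptional submanifold of $\widetilde{X}$ in the sense of Grauert, and on a sufficiently small neighborhood a line bundle that is formally trivial along $\widetilde{Y}$ is in fact holomorphically trivial by Grauert's formal principle. Invoking this, I conclude $M \cong \mathcal{O}_{\widetilde{V}}$ after shrinking $V$, which gives $i^*L \cong \widetilde{L}$ on $\widetilde{V}$. The cohomological vanishing alone cannot bridge the formal-to-convergent gap; this last step requires the full strength of Grauert's theory of neighborhoods of exceptional subvarieties — precisely the toolkit the authors develop in \S 5.
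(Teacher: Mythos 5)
Your proposal is correct and takes essentially the same route as the paper: the trivialization of $\mathcal{O}_{\mathbb{P}(\mathbb{I}_Y\oplus N^*_{Y/X})}(1)$ near $\widetilde{Y}$ (equivalently, that the divisor $\mathbb{P}(N^*_{Y/X})$ misses $\widetilde{Y}$), the Kodaira vanishing of $H^1(Y, S^\ell N^*_{Y/X})$ for $\ell\geq 1$ from the positivity of $K_Y^{-1}\otimes N_\lambda^{-1}$ and $N_\lambda^{-1}$, and the formal-to-convergent passage via the Grauert--Rossi theory of strongly pseudoconvex neighborhoods with $\widetilde{Y}$ as maximal compact set are exactly the ingredients of the paper's proof. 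The only difference is packaging: the paper absorbs your order-by-order induction and the formal principle into the statement that ${\rm Pic}^0(V)\cong{\rm Pic}^0(Y)$ (Proposition \ref{rmk_metric}, proved via Rossi's theorem and \cite[Proposition 3.1]{K2}), so that it suffices to check $\widetilde{L}|_{\widetilde{Y}}\cong\pi|_{\widetilde{Y}}^*L$.
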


The proof is based on \cite[\S 3]{K2}. 
First we prove the following proposition as a higher codimensional analogue of \cite[Proposition 3.1]{K2}: 

\begin{proposition}\label{rmk_metric}
By shrinking $V$ suitably, one have the following: \\
$(1)$ (a version of Rossi's theorem) The natural map $H^1(V, \mathcal{O}_{V})\to H^1(V, \mathcal{O}_{V}/I_{Y}^n)$ is injective for some $n\geq 1$, where $I_{Y}$ the defining ideal sheaf of $Y\subset V$. \\
$(2)$ If $H^1(Y, S^\ell N_{Y/X}^*)$ vanishes for every $\ell\geq 1$, then the groups ${\rm Pic}^0\,(V)$ and ${\rm Pic}^0\,(Y)$ are isomorphic.
\end{proposition}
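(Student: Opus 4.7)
The plan is to use the holomorphic tubular neighborhood $i \colon \widetilde V \cong V$ to identify $V$ with a relatively compact neighborhood of the zero section of $N := N_{Y/X}$, with projection $\pi \colon V \to Y$. Under this identification, $I_Y^\ell/I_Y^{\ell+1} \cong S^\ell N^*$, and for each $n$ the sheaf $\mathcal{O}_V/I_Y^n$ is coherent, supported on $Y$, with $\pi_*(\mathcal{O}_V/I_Y^n) \cong \bigoplus_{\ell=0}^{n-1} S^\ell N^*$.

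\textbf{For part (1):} The kernel of the map in the statement equals the image of $H^1(V, I_Y^n) \to H^1(V, \mathcal{O}_V)$. Since each fibre of $\pi$ is a polydisc (hence Stein) and $\mathcal{O}_V/I_Y^n$ is supported on $Y$, the Leray spectral sequence gives
\[
H^1(V, \mathcal{O}_V/I_Y^n) \cong H^1\bigl(Y,\pi_*(\mathcal{O}_V/I_Y^n)\bigr) = \bigoplus_{\ell=0}^{n-1} H^1(Y, S^\ell N^*),
\]
and assembling over $n$ produces the comparison morphism from convergent to formal cohomology
\[
H^1(V, \mathcal{O}_V) \longrightarrow \varprojlim_n H^1(V, \mathcal{O}_V/I_Y^n) \subset \prod_{\ell \geq 0} H^1(Y, S^\ell N^*).
\]
A version of Rossi's theorem for compact submanifolds guarantees, after suitable shrinking of $V$, the injectivity of this comparison morphism. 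Shrinking $V$ further so that it becomes $1$-convex (for instance, a disc-bundle neighborhood of sufficiently small radius with respect to a Hermitian metric on $N$), the Andreotti--Grauert finiteness theorem makes $H^1(V, \mathcal{O}_V)$ finite-dimensional. A finite-dimensional subspace of $\prod_\ell H^1(Y, S^\ell N^*)$ meets the decreasing kernels $\ker\bigl(\prod_\ell \to \bigoplus_{\ell < n}\bigr)$ in a chain of subspaces whose intersection is zero; by finite-dimensionality this chain must stabilise to $0$ at some finite $n$, which yields the desired injectivity.

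\textbf{For part (2):} The projection $\pi$ realises $Y$ as a deformation retract of $V$, so $H^k(V, \mathbb{Z}) \cong H^k(Y, \mathbb{Z})$ for every $k$. Comparing the exponential sequences on $V$ and $Y$ via pullback along $\pi$ and restriction along $Y \hookrightarrow V$, it suffices to exhibit a compatible isomorphism $H^1(V, \mathcal{O}_V) \cong H^1(Y, \mathcal{O}_Y)$. The vanishing hypothesis $H^1(Y, S^\ell N^*) = 0$ for every $\ell \geq 1$ collapses $\prod_{\ell \geq 0} H^1(Y, S^\ell N^*)$ to the single factor $H^1(Y, \mathcal{O}_Y)$, so the embedding from part (1) becomes $H^1(V, \mathcal{O}_V) \hookrightarrow H^1(Y, \mathcal{O}_Y)$. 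Surjectivity is immediate because $\pi|_Y = \mathrm{id}_Y$ makes restriction along $Y \hookrightarrow V$ a left inverse for $\pi^*$. Chasing the resulting diagram of exponential sequences then yields $\mathrm{Pic}^0(V) \cong \mathrm{Pic}^0(Y)$.

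\textbf{Main obstacle:} The principal technical burden is part (1): one must establish both the Rossi-type injectivity of the convergent-to-formal comparison morphism and the finite-dimensionality of $H^1(V, \mathcal{O}_V)$. Since $N_{Y/X}$ is not assumed negative, $Y$ is not exceptional in Grauert's sense, so the correct shrinking of $V$ --- balancing pseudoconvexity of the boundary against the geometry of $N$ --- is the delicate step on which the entire argument turns.
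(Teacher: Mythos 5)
Your outline follows essentially the same route as the paper, which simply invokes its codimension-one predecessor \cite[Proposition 3.1]{K2} together with Rossi's theorem \cite[Theorem 3]{R}: shrink $V$ to a strongly pseudoconvex neighborhood having $Y$ as maximal compact analytic set, use finite-dimensionality of $H^1(V,\mathcal{O}_V)$ plus injectivity into the formal completion to stabilize the decreasing chain of kernels at a finite $n$, and then deduce $(2)$ from the exponential sequences once the vanishing hypothesis identifies $H^1(V,\mathcal{O}_V/I_Y^n)$ with $H^1(Y,\mathcal{O}_Y)$. One correction to your closing remark: in the setting where this proposition is applied, $N_{Y/X}$ \emph{is} negative (Condition \ref{cond:main}~$(ii)$ splits it into $r$ negative line bundles), and the paper exploits exactly this in Lemma \ref{lem:V_psdconv_with_max_cpt_Y} --- the sublevel sets of $\Phi(z,y)=\sum_\lambda |z_\lambda|^2 e^{\varphi_\lambda(y)}$, with $\varphi_\lambda$ the weight of a positively curved metric on $N_\lambda^{-1}$, give the required strongly pseudoconvex $V$ with $Y$ maximal compact --- so the ``delicate balancing'' you flag as the main obstacle is in fact handled directly by the standing hypotheses.
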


\begin{proof}
See the proof of \cite[Proposition 3.1]{K2} (We intrinsically use Rossi's theorem \cite[Theorem 3]{R}.
Here we remark that, by Lemma \ref{lem:V_psdconv_with_max_cpt_Y} below, we may assume that $V$ is a strongly pseudoconvex domain which has $Y$ as a maximal compact set). 
\end{proof}

\begin{lemma}\label{lem:V_psdconv_with_max_cpt_Y}
There exists a strongly pseudoconvex holomorphic tubular neighborhood $V$ of $Y$ which has $Y$ as a maximal compact analytic set. 
\end{lemma}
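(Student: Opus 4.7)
The plan is to use the hypothesis that $N_{Y/X}=N_1\oplus\cdots\oplus N_r$ splits as a direct sum of negative line bundles in order to construct, on the total space of $N_{Y/X}$, an explicit plurisubharmonic function whose sublevel sets are strongly pseudoconvex tubular neighborhoods of the zero section $\widetilde{Y}$. These neighborhoods can then be transported to $X$ via the holomorphic tubular isomorphism $i\colon\widetilde{V}\to V$ assumed at the outset.

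For the construction, I would fix smooth Hermitian metrics $h_\lambda=e^{-\phi_\lambda}$ on each $N_\lambda$ with $\sqrt{-1}\partial\overline{\partial}\phi_\lambda<0$ (possible because $N_\lambda$ is negative), and set
\[
\rho:=\sum_{\lambda=1}^{r}|\xi_\lambda|^2_{h_\lambda}
\]
on the total space of $N_{Y/X}$. A direct Schur-complement computation in local fiber coordinates $\xi_\lambda=t_\lambda\,e_\lambda$ shows that the complex Hessian of $\rho$ is positive-definite in the fiber direction (with diagonal block $e^{-\phi_\lambda}$) and that its Schur complement along the base direction equals $-\sum_\lambda|t_\lambda|^2 e^{-\phi_\lambda}\,\partial\overline{\partial}\phi_\lambda$, which is strictly positive whenever some $t_\lambda\neq 0$. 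Thus $\rho$ is plurisubharmonic on the whole total space, vanishes exactly on $\widetilde{Y}$, and is strictly plurisubharmonic on $N_{Y/X}\setminus\widetilde{Y}$; in particular every sufficiently small sublevel set $\widetilde{V}_c:=\{\rho<c\}$ is a relatively compact strongly pseudoconvex neighborhood of $\widetilde{Y}$ inside the given holomorphic tubular neighborhood $\widetilde{V}$. Replacing $V$ by $i(\widetilde{V}_c\cap\widetilde{V})$ for such a $c$ then yields a strongly pseudoconvex holomorphic tubular neighborhood of $Y$ in $X$, with smooth defining function $\rho\circ i^{-1}$.

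The maximality of $Y$ among compact analytic subsets of this new $V$ will follow from the maximum principle applied to $\rho\circ i^{-1}$: given any positive-dimensional compact analytic subvariety $Z\subset V$ with a positive-dimensional irreducible component $Z'$, the restriction $\rho\circ i^{-1}|_{Z'}$ is plurisubharmonic on a compact analytic set and hence constant, but the strict plurisubharmonicity of $\rho\circ i^{-1}$ on $V\setminus Y$ prevents that constant from being nonzero, so $Z'\subset Y$, and therefore $Z\subset Y$. I do not expect any serious obstacle here; the only step requiring care is the Schur-complement computation verifying strict plurisubharmonicity of $\rho$ off $\widetilde{Y}$, which is essentially the classical Grauert-type construction specialized to the split setting of Condition \ref{cond:main} (ii).
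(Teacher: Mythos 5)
Your proposal is correct and follows essentially the same route as the paper: the paper also reduces to the total space of $N_{Y/X}$ via the tubular isomorphism and takes sublevel sets of the fiberwise norm-squared function $\Phi(z,y)=\sum_\lambda |z_\lambda|^2 e^{\varphi_\lambda(y)}$ built from negatively curved metrics on the $N_\lambda$ (written there as positively curved metrics on $N_\lambda^{-1}$, so $\varphi_\lambda=-\phi_\lambda$ in your notation). Your Schur-complement verification of strict plurisubharmonicity off the zero section and the maximum-principle argument for maximality simply make explicit what the paper leaves to the reader.
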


\begin{proof}
As $Y$ admits a holomorphic tubular neighborhood, it is sufficient to show the lemma by assuming $X=\widetilde{X}$. Take a $C^\infty$ Hermitian metric $h_{N_\lambda^{-1}}$ on $N_\lambda^{-1}$ with positive curvature. 
Taking a local coordinate $y$ of $Y$ and pulling it back by $\pi$, we regard $(z, y)=(z_1, z_2, \dots, z_r, y)$ as a local coordinates system of $X$, where $z_\lambda$ is a fiber coordinate of $N_\lambda$. 
By considering the sublevel set of the function $\Phi\colon N_{Y/X}\to \mathbb{R}$ defined by 
\[
\Phi(z, y):=\sum_{\lambda=1}^r|z_\lambda|^2 e^{\varphi_\lambda(y)}, 
\]
where $\varphi_\lambda$ is the local weight of $h_{N_\lambda^{-1}}$, the lemma follows.
\end{proof}

\begin{proof}[Proof of Proposition \ref{prop:L_vs_Ltilde}]
First note that
\[
K_Y^{-1}\otimes S^\ell N_{Y/X}^*=\bigoplus_{\alpha\in\mathbb{Z}_{\geq 0}^r, |\alpha|=\ell}K_Y^{-1}\otimes \left(\textstyle\bigotimes_{\lambda=1}^r N_\lambda^{-\alpha_\lambda}\right)
\]
holds. 
As $K_Y^{-1}\otimes N_\lambda^{-1}$ and $N_\mu$ are positive for each $\lambda, \mu=1, 2, \dots, r$, 
it follows from Kodaira's vanishing theorem that $H^1(Y, S^\ell N_{Y/X}^*)$ vanishes for each $\ell\geq 1$.
Thus, by Proposition \ref{rmk_metric}, 
it is sufficient to show that $\widetilde{L}|_{\widetilde{Y}}\cong\pi|_{\widetilde{Y}}^*L$ holds. 
The line bundle $\mathcal{O}_{\mathbb{P}(\mathbb{I}_Y\oplus N_{Y/X}^*)}(1)$ corresponds to the divisor $\mathbb{P}(N_{Y/X}^*)\subset\mathbb{P}(\mathbb{I}_Y\oplus N_{Y/X}^*)$, which does not intersect $\widetilde{Y}$. Therefore we have $\widetilde{L}|_{\widetilde{Y}}=\mathcal{O}_{\mathbb{P}(\mathbb{I}_Y\oplus N_{Y/X}^*)}(1)|_{\widetilde{Y}}\otimes \pi|_{\widetilde{Y}}^*L=\pi|_{\widetilde{Y}}^*L$. 
\end{proof}


\subsection{Minimal singular metrics on $L$ and $\widetilde{L}$}

Let $X, Y$, and $L$ be those in Theorem \ref{thm:main}, and let $\widetilde{X}, \widetilde{Y}, \widetilde{L}, V$, and $\widetilde{V}$ be those in the previous subsection. 
Here we prove the following: 

\begin{proposition}\label{prop:min_sings_comp}
Let $h$ be a metric of $L$ with minimal singularities and $\widetilde{h}$ be a metric of $\widetilde{L}$ with minimal singularities. Then $h|_V\sim_{\rm sing}(i^{-1})^*\widetilde{h}|_{\widetilde{V}}$ holds at each point in $V$. 
\end{proposition}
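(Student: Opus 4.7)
The plan is to deduce $h|_V\sim_{\rm sing}(i^{-1})^*\widetilde{h}|_{\widetilde{V}}$ from two completely symmetric one-sided arguments, each of which extends the restriction of a metric on a tubular neighborhood to a globally defined semi-positively curved singular Hermitian metric on the whole manifold, and then invokes the minimality of the other metric. For each direction of the comparison, the extension is produced by the maximum construction: one takes the pointwise maximum of the local metric with a globally defined Bergman-type auxiliary metric after subtracting a large constant, so that the two definitions agree on the gluing annulus.

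More precisely, to prove $h|_V\lesssim_{\rm sing}(i^{-1})^*\widetilde{h}|_{\widetilde{V}}$ near $Y$, I would fix a smooth Hermitian metric $h_0$ on $L$ with curvature $\theta_0$, let $u$ be the $\theta_0$-plurisubharmonic function on $V$ representing $(i^{-1})^*\widetilde{h}$, and let $v_\ast$ be the $\theta_0$-plurisubharmonic function on $X$ representing a Bergman-type metric $h_\ast$ on $L$ obtained via Example \ref{ex:bergman_type_sing_metric} from sections of $L^m$ whose common zero locus is $Y$. Since $\widetilde{h}$ is locally bounded on $\widetilde{X}\setminus\widetilde{Y}$ (by Example \ref{ex:bergman_type_sing_metric} applied to $\widetilde{L}$, using ${\bf B}(\widetilde{L})\subset\widetilde{Y}$ established in \S 3.2) and $h_\ast$ is smooth on $X\setminus Y$, both $u$ and $v_\ast$ are bounded on a compact annulus $\overline{V_2\setminus V_1}\subset X\setminus Y$ arising from strongly pseudoconvex shrinkings $V_1\subset\subset V_2\subset\subset V$ of $Y$ provided by Lemma \ref{lem:V_psdconv_with_max_cpt_Y}. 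Choosing $K$ so large that $u-K<v_\ast-1$ on this annulus, I set $U:=\max(u-K,v_\ast)$ on $V_2$ and $U:=v_\ast$ on $X\setminus V_1$; the two definitions agree on the overlap, so $U$ is a well-defined $\theta_0$-plurisubharmonic function on $X$ satisfying $U\geq u-K$ on $V_1$. Consequently, $H:=h_0\cdot e^{-U}$ is a globally semi-positively curved singular Hermitian metric on $L$ with $H\lesssim_{\rm sing}(i^{-1})^*\widetilde{h}$ on $V_1$, and the minimality of $h$ yields $h|_{V_1}\lesssim_{\rm sing}H|_{V_1}\lesssim_{\rm sing}(i^{-1})^*\widetilde{h}|_{V_1}$.

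The reverse inequality is obtained by running the entirely analogous maximum construction on $\widetilde{X}$: one extends $i^*h|_{\widetilde{V}}$ to a global semi-positively curved metric $\widetilde{H}$ on $\widetilde{L}$ by gluing to a Bergman-type metric on $\widetilde{L}$ (which is smooth on $\widetilde{X}\setminus\widetilde{Y}$ since $\widetilde{L}$ is big with ${\bf B}(\widetilde{L})\subset\widetilde{Y}$, the bigness following from the ampleness of the $M_\lambda$ in the projective bundle model of \S 3.2), obtaining $\widetilde{H}\lesssim_{\rm sing}i^*h$ on a neighborhood of $\widetilde{Y}$, and then invoking the minimality of $\widetilde{h}$. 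Points of $V\setminus V_1$ need no further argument, since both $h$ and $(i^{-1})^*\widetilde{h}$ are locally bounded there by Example \ref{ex:bergman_type_sing_metric}. The main technical obstacle is the extension step itself: for the maximum to do its job, the auxiliary $v_\ast$ must be honestly bounded (and in particular not $-\infty$) on the gluing annulus $\overline{V_2\setminus V_1}$, which forces us to use a Bergman-type metric that is actually \emph{smooth} on the complement of the base locus rather than an arbitrary minimally singular metric, and this smoothness requirement on $\widetilde{X}$ is precisely why the inclusion ${\bf B}(\widetilde{L})\subset\widetilde{Y}$ is indispensable for the symmetric half of the argument.
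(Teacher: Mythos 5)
Your argument is correct and uses essentially the same maximum-construction technique as the paper: the paper's Lemma \ref{lem:X_min_vs_V_min} glues a locally defined weight on the tubular neighborhood to a global Bergman-type weight via $\max$ exactly as you glue $u-K$ to $v_\ast$, and then invokes minimality. The only difference is organizational---the paper routes both comparisons through the local equilibrium weight $\varphi_V$ (applying the lemma to $(X,L)$ and to $(\widetilde{X},\widetilde{L})$ and using $\varphi_V=i^*\varphi_{\widetilde{V}}$), whereas you extend the two transported metrics directly---and every ingredient you invoke ($i^*L\cong\widetilde{L}$, ${\bf B}(\widetilde{L})\subset\widetilde{Y}$, local boundedness away from the base loci) is the one the paper uses.
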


The proof of Proposition \ref{prop:min_sings_comp} is based on the ``maximum construction technique" which is also used in the proof of \cite[Theorem 1.2]{K2}. 
Fix a $C^\infty$ metric $h_\infty$ of $L$ and denote by $\theta$ the curvature tensor $\Theta_{h_\infty}$. 
Set $\varphi_V:=\sup\{\varphi\in PSH(V, \theta|_V)\mid \varphi\leq 0\ {\rm on}\ V\}$ and $\varphi_X:=\sup\{\varphi\in PSH(X, \theta)\mid \varphi\leq 0\ {\rm on}\ X\}$. 
We first show the following: 

\begin{lemma}\label{lem:X_min_vs_V_min}
It holds that $\varphi_V\sim_{\rm sing}\varphi_X$. 
In particular, the restriction of a metric of $L$ with minimal singularities to $V$ has singularities equivalent to the metric $h_{\infty}|_V\cdot e^{-\varphi_V}$. 
\end{lemma}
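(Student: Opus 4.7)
The plan is to prove $\varphi_V\sim_{\rm sing}\varphi_X$ by separately establishing the two comparisons $\varphi_X|_V\leq\varphi_V$ and $\varphi_V\leq\varphi_X+C$ on every relatively compact open subset $V'\Subset V$; the equivalence of singularities of the associated metrics will then follow immediately, as will the ``in particular'' statement (since any metric of $L$ with minimal singularities is $\sim_{\rm sing}$-equivalent to $h_\infty\cdot e^{-\varphi_X}$). One direction is essentially automatic from the extremal characterizations, while the other will be obtained by a maximum construction gluing a truncated $\varphi_V$ inside $V$ with $\varphi_X$ outside a slightly smaller tubular neighborhood.

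For the easy direction, I note that $\varphi_X\leq 0$ on $X$ (it is a supremum of functions $\leq 0$), so its restriction $\varphi_X|_V$ is a $\theta|_V$-plurisubharmonic function bounded above by $0$. Hence $\varphi_X|_V$ is a competitor in the supremum defining $\varphi_V$, giving $\varphi_X|_V\leq\varphi_V$ pointwise on $V$.

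For the reverse direction, I would choose open sets $V'\Subset V''\Subset V$ with $Y\subset V'$, so that $K:=\overline{V''}\setminus V'$ is a compact subset of $X\setminus Y$. The point I want to exploit is that $\varphi_X$ is locally bounded on $X\setminus Y$: since $Y=\mathbf{B}(L)$, the Bergman-type construction of Example \ref{ex:bergman_type_sing_metric} yields a $\theta$-plurisubharmonic function whose local weight is smooth on $X\setminus Y$, and the envelope $\varphi_X$ dominates any such function up to an additive constant. In particular $\varphi_X\geq -M$ on $K$ for some $M>0$. Define
\[
\psi(x):=\begin{cases}\max\bigl(\varphi_V(x)-M-1,\ \varphi_X(x)\bigr) & x\in V'',\\[0.3em] \varphi_X(x) & x\in X\setminus\overline{V'}.\end{cases}
\]
On the overlap $V''\setminus\overline{V'}\subset K$ one has $\varphi_V-M-1\leq -M-1<-M\leq\varphi_X$, so the two prescriptions agree (both equal $\varphi_X$) and $\psi$ is an unambiguous, $\theta$-plurisubharmonic function on all of $X$, bounded above by $0$. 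By the extremal property of $\varphi_X$, $\psi\leq\varphi_X$ on $X$; restricting to $V'$ and unpacking the maximum yields $\varphi_V-M-1\leq\varphi_X$, i.e.\ $\varphi_V\leq\varphi_X+M+1$ on $V'$. Since $V'\Subset V$ was arbitrary, this completes the proof.

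I expect the only delicate ingredient to be the local boundedness of $\varphi_X$ on $X\setminus Y$, which ultimately rests on $Y=\mathbf{B}(L)$ together with the Bergman-type construction of Example \ref{ex:bergman_type_sing_metric}; every other step (the gluing using compactness of $K$, the $\theta$-plurisubharmonicity of a maximum, and the extremal comparison) is standard.
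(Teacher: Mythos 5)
Your proof is correct and follows essentially the same route as the paper: the easy inequality $\varphi_X|_V\leq\varphi_V$ from the extremal characterization, and the reverse bound via a maximum-construction gluing across a compact collar, with the Bergman-type metric of Example \ref{ex:bergman_type_sing_metric} supplying the needed lower bound there (the paper takes $\max\{\varphi_V-C_1,\varphi_a\}$ with $\varphi_a$ the Bergman weight itself, while you take the max with $\varphi_X$ and use $\varphi_a$ only to bound $\varphi_X$ from below on the collar --- a cosmetic difference). No gaps.
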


\begin{proof}
As the inequality $\varphi_X|_V\leq \varphi_V$ is easily obtained, all we have to do is to show the existence of a constant $C$ with $\varphi_V\leq \varphi_X|_V+C$. 
As $L$ is big and ${\bf B}(L)=Y$, 
we can take an integer $m\geq 1$ and sections $f_1, f_2, \dots, f_\ell\in H^0(X, L^m)$ such that the common zero of these sections is $Y$ \cite[2.1.21]{Laz}. 
Denote by $h_a$ the Bergman type metric on $L$ constructed from $f_1, f_2, \dots, f_\ell$ (see Example \ref{ex:bergman_type_sing_metric} ). 
Define the function $\varphi_a$ by $h_a=h_\infty\cdot e^{-\varphi_a}$. 
We may assume that $\varphi_a\leq 0$ holds on $V$. 
Fix a relatively compact open neighborhood $V_0\Subset V$ of $Y$ and set 
$C_1:=-\textstyle\inf_{V\setminus V_0}\varphi_a$. 
Consider a $\theta$-plurisubharmonic function $\widehat{\varphi}:=\max\{\varphi_V-C_1, \varphi_a\}$. 
As $\varphi_V-C_1\leq -C_1\leq \varphi_a$ holds on $V\setminus V_0$, we have $\widehat{\varphi}=\varphi_a$ on each point in $V\setminus V_0$. 
Thus we can extend $\widehat{\varphi}$ to whole $X$ by defining $\widehat{\varphi}(x):=\varphi_a(x)$ for each $x\in X\setminus V_0$. 
It is clear from the construction that $\widehat{\varphi}\in PSH(X, \theta)$. 
Set $C_2:=\textstyle\max_X \widehat{\varphi}$. 
Then, as $\widehat{\varphi}-C_2\leq 0$, we obtain that $\widehat{\varphi}-C_2\leq \varphi_X$. Therefore it holds that 
$\varphi_V-C_1-C_2\leq \widehat{\varphi}-C_2\leq \varphi_X$. 
\end{proof}

\begin{proof}[Proof of Proposition \ref{prop:min_sings_comp}]
Take a $C^\infty$ Hermitian metric $h_\infty$ on $L$ and $\widetilde{h}_\infty$ on $\widetilde{L}$
with $h_\infty|_V= i^*\widetilde{h}_\infty$ (here we used Proposition \ref{prop:L_vs_Ltilde}). 
By Lemma \ref{lem:X_min_vs_V_min}, it follows that $\varphi_V\sim_{\rm sing}\varphi_X$, where $\varphi_V$ and $\varphi_X$ are as above. 
Set $\varphi_{\widetilde{V}}:=\sup\{\varphi\in PSH(\widetilde{V}, \Theta_{\widetilde{h}_\infty}|_{\widetilde{V}})\mid \varphi\leq 0\ {\rm on}\ \widetilde{V}\}$ and $\varphi_{\widetilde{X}}:=\sup\{\varphi\in PSH(\widetilde{X}, \Theta_{\widetilde{h}_\infty})\mid \varphi\leq 0\ {\rm on}\ \widetilde{X}\}$. 
By the arguments in \S 3.2, we can apply Lemma \ref{lem:X_min_vs_V_min} also to the projective {space} bundle model $(\widetilde{X}, \widetilde{L})$ to obtain that $\varphi_{\widetilde{V}}\sim_{\rm sing}\varphi_{\widetilde{X}}$. 
As $h_\infty|_V= i^*\widetilde{h}_\infty$, we have that $\varphi_V=i^*\varphi_{\widetilde{V}}$. 
Therefore it follows that $\varphi_X\sim_{\rm sing}i^*\varphi_{\widetilde{X}}$. 
\end{proof}


\subsection{Proof of Theorem \ref{thm:main}}\label{subsec:mainthmproof}
Theorem \ref{thm:main} follows from 
Theorem \ref{thm:P_bdl_main} and Proposition \ref{prop:min_sings_comp}. 
\qed



\section{A sufficient condition for the existence of a holomorphic tubular neighborhood and Proof of Theorem \ref{cor:ab}}\label{section:tubular}

Let $X$ be a complex manifold and let $Y\subset X$ be a compact complex submanifold of codimension $r$. In this section, we investigate when $Y$ admits a holomorphic tubular neighborhood $V$ in $X$. 
In particular, we here study a higher codimensional analogue of Grauert's theorem (\cite{G}, the case of $r=1$, see also Theorem \ref{thm:Grauert} below). 

\subsection{A higher codimensional analogue of Grauert's theorem}

In this subsection, we show the following: 

\begin{proposition}\label{prop:grauert_higher_codim}
Assume that $N_{Y/X}$ admits a direct decomposition $N_{Y/X}=N_1\oplus N_2\oplus \cdots\oplus N_r$ into $r$ negative line bundles. 
Assume also that $H^1(E, \mathcal{O}_{{\bf P}(N_{Y/X})}(\nu))=0$ and $H^1(E, T_E\otimes \mathcal{O}_{{\bf P}(N_{Y/X})}(\nu))=0$ hold for every $\nu\geq 1$, where $E$ is the total space of the projective {space} bundle ${\bf P}(N_{Y/X})$. 
Then $Y$ admits a holomorphic tubular neighborhood. 
\end{proposition}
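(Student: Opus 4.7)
The plan is to reduce Proposition \ref{prop:grauert_higher_codim} to the classical codimension one Grauert linearization theorem by means of a simultaneous blow-up along $Y$. Let $\sigma\colon \widetilde{X}\to X$ denote the blow-up of $X$ along $Y$ and $\tau\colon \widetilde{N}\to N_{Y/X}$ the blow-up of the total space of the normal bundle along the zero section. Both blow-ups have the same exceptional divisor $E={\bf P}(N_{Y/X})$, and in both ambient manifolds the normal bundle of $E$ is canonically identified with the tautological line bundle $\mathcal{O}_E(-1)$. If one can produce a biholomorphism between a neighborhood of $E$ in $\widetilde{X}$ and a neighborhood of $E$ in $\widetilde{N}$ that restricts to the identity on $E$, then descending through $\sigma$ and $\tau$ will give the desired holomorphic tubular neighborhood.

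First I would verify that $\mathcal{O}_E(-1)$ is Grauert-negative. Take metrics $h_{N_\lambda^{-1}}$ of positive curvature on each $N_\lambda^{-1}$ and form the function $\Phi(z,y)=\sum_\lambda |z_\lambda|^2e^{\varphi_\lambda(y)}$ on $N_{Y/X}$, as in the proof of Lemma \ref{lem:V_psdconv_with_max_cpt_Y}; its sublevel sets form a neighborhood basis of $Y$ in $N_{Y/X}$ by strongly pseudoconvex domains, equivalently $\mathcal{O}_E(-1)$ is negative (and $\mathcal{O}_E(1)$ positive). I would then apply Grauert's linearization theorem to the hypersurface $E\subset \widetilde{X}$. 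The obstructions to constructing the linearization order by order along the powers of the ideal of $E$, via the short exact sequences
\[
0\to \mathcal{O}_E(\nu+1)\to \mathcal{O}/\mathcal{I}_E^{\nu+2}\to \mathcal{O}/\mathcal{I}_E^{\nu+1}\to 0
\]
and its analogue for the tangent sheaf, lie in $H^1(E,\mathcal{O}_E(\nu))$ and $H^1(E,T_E\otimes \mathcal{O}_E(\nu))$ (using the identification $\mathcal{O}_E(-1)^{-\nu}\cong \mathcal{O}_E(\nu)$), which vanish by hypothesis. Hence a formal linearization exists, and its convergence follows from the Grauert-negativity established above. The corresponding theorem for $E\subset \widetilde{N}$ is essentially tautological since $\widetilde{N}$ is literally the total space of $\mathcal{O}_E(-1)$ near $E$. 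Composing yields a biholomorphism $\Psi\colon U_{\widetilde{X}}\xrightarrow{\sim} U_{\widetilde{N}}$ of neighborhoods of $E$ with $\Psi|_E=\mathrm{id}_E$.

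Finally, since $\Psi|_E=\mathrm{id}_E$, for every $y\in Y$ the map $\Psi$ carries the fiber $\pi^{-1}(y)\subset E$ of $E\to Y$ to itself, and $\Psi$ is an isomorphism off $E$. The contractions $\sigma$ and $\tau$ collapse precisely these fibers to the corresponding points of $Y$, so $\tau\circ\Psi\circ\sigma^{-1}$ is a well-defined biholomorphism between neighborhoods of $Y$ off $Y$ itself; it extends continuously across $Y$ by the canonical identification of $Y$ with itself, and by Riemann's extension theorem (both ambient spaces being smooth) the extension is biholomorphic. This gives the desired holomorphic tubular neighborhood $i\colon V\to \widetilde{V}$.

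The main obstacle I expect is the second-to-last step: correctly identifying the two $H^1$ vanishings in the hypothesis as the complete set of cohomological obstructions for both the formal linearization of $(\widetilde{X},E)$ along its normal bundle and its convergence, and carefully invoking Grauert's theorem in the form best adapted to this normal bundle. A secondary subtlety is that one must distinguish the Grauert-negativity of $\mathcal{O}_E(-1)$ (which controls convergence of the formal linearization) from the mere negativity of each individual $N_\lambda$ (which is used to produce the strictly plurisubharmonic exhaustion $\Phi$); verifying the implication requires the direct-sum hypothesis on $N_{Y/X}$ in an essential way.
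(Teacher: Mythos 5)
You follow essentially the same route as the paper: blow up $X$ along $Y$, identify the exceptional divisor with $E={\bf P}(N_{Y/X})$, whose normal bundle in either ambient space is $\mathcal{O}_{{\bf P}(N_{Y/X})}(-1)$, apply the codimension-one Grauert linearization theorem to $E$ using the hypothesized $H^1$-vanishings, and descend through the two blow-downs, extending across $Y$ by Riemann's extension theorem. The paper packages the descent as Lemma \ref{lem:hol_tub_nbhd_or_vs_bup} (with the extension step isolated as Lemma \ref{lem:hol_ext_conti}), and your sketch of that step matches it.

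The one place where your argument is genuinely thinner than the paper's is the convergence step, which you yourself flag as the main obstacle. Your strictly plurisubharmonic exhaustion $\Phi$ is defined on the total space of $N_{Y/X}$, so it exhibits $E$ as an exceptional set only in the blow-up $\widetilde N$ of the \emph{linear model}; it says nothing a priori about strongly pseudoconvex neighborhoods of $E$ in $\widetilde X$, and producing such neighborhoods of $Y$ (equivalently of $E$) is essentially what the proposition asserts. Since Grauert's Theorem \ref{thm:Grauert} takes the (strong) exceptionality of $E$ in $\widetilde X$ as a hypothesis, the sentence ``its convergence follows from the Grauert-negativity established above'' is circular as written. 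The paper closes this gap by finite determinacy of exceptionality (\cite{HR}; see also \cite{Lau} and \cite{CM}): $E$ is exceptional in $\widetilde X$ once (i) $N_{E/\widetilde X}=\mathcal{O}_{{\bf P}(N_{Y/X})}(-1)$ is negative, which follows from the negativity of the $N_\lambda$, and (ii) the first infinitesimal neighborhoods of $E$ in $\widetilde X$ and in $\widetilde N$ agree, i.e.\ $\mathcal{O}_{\widetilde X}/I_E^2\cong\mathcal{O}_{\widetilde N}/I_E^2$; condition (ii) is exactly where the hypothesis $H^1(E,T_E\otimes\mathcal{O}_{{\bf P}(N_{Y/X})}(1))=0$ is used a second time, via \cite[Proposition 1.10, 1.11]{CM}. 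So your reading of the $H^1$ hypotheses as the formal obstructions is correct, but you additionally need the $\nu=1$ case to split the first-order neighborhood and thereby transfer exceptionality---hence convergence of the linearization---from $\widetilde N$ to $\widetilde X$ before Theorem \ref{thm:Grauert} can be invoked.
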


Note that Proposition \ref{prop:grauert_higher_codim} is the Grauert's theorem when $r=1$. 
We will consider a blow-up $p\colon W\to X$ of $X$ along $Y$ and apply the Grauert's theorem to $E\subset W$ to show this proposition, where we are regarding $E$ as the exceptional divisor (see \cite[Proposition 12.4]{D}). 
For this purpose, we first show the following: 

\begin{lemma}\label{lem:hol_tub_nbhd_or_vs_bup}
Assume that $E$ admits a holomorphic tubular neighborhood in $W$. 
Then $Y$ also admits a holomorphic tubular neighborhood in $X$. 
\end{lemma}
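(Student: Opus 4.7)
The plan is to pass from a holomorphic tubular neighborhood of $E$ in $W$ to one of $Y$ in $X$ by simultaneously blowing down on both sides. The essential observation is that the total space of $N_{E/W}=\mathcal{O}_{{\bf P}(N_{Y/X})}(-1)$ is itself canonically isomorphic to the blow-up of $N_{Y/X}$ along its zero section: the tautological inclusion $\mathcal{O}_{{\bf P}(N_{Y/X})}(-1)\hookrightarrow \pi^*N_{Y/X}$ composed with the bundle projection to $N_{Y/X}$ gives a proper map $q\colon \mathrm{Tot}(\mathcal{O}_E(-1))\to N_{Y/X}$ which is a biholomorphism off the zero section $E$ and which contracts $E$ to $Y\subset N_{Y/X}$ via the projection $\pi\colon E\to Y$. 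Thus both $W$ and $\mathrm{Tot}(\mathcal{O}_E(-1))$ are birational modifications of their blow-downs along the same divisor $E$.

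Given a biholomorphism $i\colon U\to\widetilde{U}$ between neighborhoods of $E$ in $W$ and in $\mathrm{Tot}(\mathcal{O}_E(-1))$ whose restriction to $E$ is the natural identification, I would set $V:=p(U)$ and $\widetilde{V}:=q(\widetilde{U})$, where $p\colon W\to X$ is the blow-down map. By properness of $p$ and $q$ (combined with the fact that $p^{-1}(Y)=E\subset U$ and $q^{-1}(Y)=E\subset\widetilde{U}$), these are open neighborhoods of $Y$ in $X$ and in $N_{Y/X}$, respectively. Define a map $\varphi\colon V\to\widetilde{V}$ by
\[
\varphi:=q\circ i\circ p^{-1}\quad\text{on }V\setminus Y,\qquad \varphi|_Y:=\text{the zero section inclusion }Y\hookrightarrow N_{Y/X}.
\]
This is well-defined and bijective (with inverse $p\circ i^{-1}\circ q^{-1}$ off $Y$ and the identity on $Y$), since both $p$ and $q$ are isomorphisms away from $E$.

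The continuity of $\varphi$ at a point $y\in Y$ follows from a straightforward limit argument: for any sequence $x_n\to y$ in $V$, accumulation points of $p^{-1}(x_n)$ lie in the fiber $\pi^{-1}(y)\subset E$; since $i|_E$ is the identity, the same holds for $i(p^{-1}(x_n))$; and $q$ collapses $\pi^{-1}(y)$ to the single point $y$. Hence $\varphi(x_n)\to y=\varphi(y)$. By the symmetric argument, $\varphi^{-1}$ is also continuous. I would then upgrade $\varphi$ to a biholomorphism by invoking the Riemann extension theorem coordinate-wise: working in local coordinates on $\widetilde{V}\subset N_{Y/X}$, each component of $\varphi$ is a continuous (hence locally bounded) complex-valued function on $V$ which is holomorphic on the complement of the analytic subset $Y$, hence holomorphic on all of $V$. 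Applying the same reasoning to $\varphi^{-1}$ shows that $\varphi$ is a biholomorphism, and $\varphi|_Y$ is the natural identification by construction.

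The main obstacle is the last step: although away from $Y$ the map $\varphi$ is manifestly biholomorphic (being a composition of biholomorphisms), across $Y$ one has a priori only continuity, and the holomorphic extension relies on an application of Riemann's removable singularity theorem. Everything else in the argument is essentially bookkeeping around the identification of $\mathrm{Tot}(\mathcal{O}_E(-1))$ with the blow-up of $N_{Y/X}$ along its zero section.
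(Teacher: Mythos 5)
Your proposal is correct and follows essentially the same route as the paper: both identify the total space of $\mathcal{O}_{{\bf P}(N_{Y/X})}(-1)$ with the blow-up of $N_{Y/X}$ along its zero section, transport the tubular neighborhood of $E$ through the two blow-down maps, verify continuity of the resulting map across $Y$ (the paper does this via an auxiliary retraction $g\colon V\to Y$ and a topological argument, you via a direct accumulation-point argument, but the mechanism is the same), and conclude holomorphy of the map and its inverse by Riemann's removable singularity theorem applied coordinate-wise.
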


\begin{proof}
Denote by $\widetilde{Y}$ the zero section of $\pi\colon N_{Y/X}\to X$. 
Take a neighborhood $\widetilde{V}$ of $\widetilde{Y}$. 
We denote by $\widetilde{W}$ the blow-up $\widetilde{p}\colon \widetilde{W}\to \widetilde{V}$ of $\widetilde{V}$ along $\widetilde{Y}$ and by $\widetilde{E}\subset \widetilde{W}$ the exceptional set. 
By the assumption (and by shrinking $X$ if necessary), we may assume that there exists a biholomorphic map $F\colon \widetilde{W}\to W$ with $F|_{\widetilde{E}}=q|_{\widetilde{E}}$, where $q\colon\mathcal{O}_{{\bf P}(N_{Y/X})}(-1)\to E (={\bf P}(N_{Y/X}))$ is the natural projection (here we are regarding $\widetilde{W}$ as a neighborhood of the zero section $E$ of the line bundle $\mathcal{O}_{{\bf P}(N_{Y/X})}(-1)$, see also \cite[Proposition 12.4]{D}). 

First, let us construct a holomorphic map $g\colon V\to Y$ with $g|_Y={\rm id}_Y$ such that the diagram 
\[\xymatrix{
\widetilde{W} \ar[r]^F \ar[d]^{\rm } \ar@{}[dr]|\circlearrowleft &W \ar[r]^p \ar[d]^{} \ar@{}[dr]|\circlearrowleft & V \ar[d]^{g} \\
\widetilde{E} \ar[r]^{q|_{\widetilde{E}}} &E \ar[r]^{\rm } & Y & \\
}\]
is commutative. 
It is clear that such function $g$ is uniquely determined in the set-theoretic sense. 
It also follows from a standard argument that this $g$ is a continuous map. 
It is also clear that $g|_{V\setminus Y}$ is biholomorphic and that $g|_Y={\rm id}_Y$. 
Thus the existence of the holomorohic map $g$ follows from Riemann's extension theorem (see Lemma \ref{lem:hol_ext_conti} below). 

Next we show that the biholomorphic map $f\colon \widetilde{V}\setminus \widetilde{Y}\cong V\setminus Y$ induced by 
$F|_{\widetilde{W}\setminus \widetilde{E}}$ extends to the biholomorphism $\widehat{f}\colon \widetilde{V}\cong V$ with $\widehat{f}|_{\widetilde{Y}}=\pi|_{\widetilde{Y}}$. 
Note that, by construction, the fibration structures $\pi|_{\widetilde{V}}\colon\widetilde{V}\to\widetilde{Y}$ and $g\colon V\to Y$ are preserved by $f$. 
Therefore, by a simple topological argument, it follows that there uniquely exists a continuous map $\widehat{f}\colon \widetilde{V}\to V$ with $\widehat{f}|_{\widetilde{V}\setminus\widetilde{Y}}=f$, and that this $\widehat{f}$ satisfies $\widehat{f}|_{\widetilde{Y}}=\pi|_{\widetilde{Y}}$. 
The regularity of $\widehat{f}$ and $\widehat{f}^{-1}$ is shown again by Lemma \ref{lem:hol_ext_conti} below. 
\end{proof}

\begin{lemma}\label{lem:hol_ext_conti}
Let $M$ and $N$ be complex manifolds, let $Z\subset M$ be a submanifold with codimension greater than or equal to $1$, and let $h\colon M\to N$ be a continuous map. 
Assume that $h|_{M\setminus Z}$ is holomorphic. 
Then $h$ is holomorphic on $M$. 
\end{lemma}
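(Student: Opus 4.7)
The plan is to reduce the statement to the classical Riemann extension theorem for holomorphic functions. Since holomorphy is a local property on both $M$ and $N$, I would fix a point $p \in Z$ (the only interesting case, since outside $Z$ there is nothing to check), choose a chart of $N$ around $h(p)$, and use the continuity of $h$ to shrink a neighborhood $U$ of $p$ in $M$ so that $h(U)$ sits inside this chart. After this reduction we may assume $N$ is an open subset of $\mathbb{C}^n$ and it suffices to treat each component $h_k \colon U \to \mathbb{C}$ separately.

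Each $h_k$ is continuous on $U$ and holomorphic on $U \setminus Z$. Shrinking $U$ further, we may assume $Z \cap U$ is a closed analytic subset of $U$ of codimension $\geq 1$ (in fact a submanifold). Continuity on the compact closure of a smaller neighborhood gives that $h_k$ is locally bounded near $Z$, so Riemann's extension theorem (for holomorphic functions across an analytic set of codimension $\geq 1$, which applies since the singular set has codimension at least one and can be further reduced to a hypersurface situation by intersecting with generic slices, or directly via the second Riemann extension theorem) produces a holomorphic extension $\widetilde{h}_k$ on $U$. Because $\widetilde{h}_k$ and $h_k$ are continuous on $U$ and agree on the dense open set $U \setminus Z$, they coincide on $U$. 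Hence $h_k$ itself is holomorphic on $U$, and running this at every $p \in Z$ proves holomorphy of $h$ on all of $M$.

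There is no real obstacle here: the only subtlety worth flagging is the codimension assumption. If one invokes the first Riemann extension theorem directly one wants $Z$ to be (locally) contained in a hypersurface, which is automatic when $\operatorname{codim} Z \geq 1$; alternatively the second Riemann extension theorem handles codimension $\geq 2$ even without boundedness, and the codimension $1$ case is covered by boundedness coming from continuity. Either way the ingredient is standard, so the proof is essentially a one-line appeal to Riemann extension after the reduction to scalar-valued functions described above.
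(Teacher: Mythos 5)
Your proof is correct and follows essentially the same route as the paper: localize at a point of $Z$, use continuity to land in a coordinate chart of $N$, reduce to the scalar components, invoke Riemann's extension theorem (the paper uses the local $L^2$-boundedness form, you use local boundedness directly --- an immaterial difference), and conclude by continuity and density of $M\setminus Z$.
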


\begin{proof}
Take a point $z\in Z$ and an open ball $U'\subset N$ with coordinate system $\eta=(\eta_1, \eta_2, \dots, \eta_n)$ around $h(z)$ ($n:={\rm dim}\,N$). 
We may assume that $U'=\{|\eta|<\varepsilon\}$ for some $\varepsilon>0$. 
Take a sufficiently small neighborhood $U\subset M$ of $z$ so that $U\subset h^{-1}(U')$ (and thus $h(U)\subset U'$). 
In what follows, we show the lemma by replacing $M$ with $U$, $N$ with $U'$, and $h$ with $h|_{U}$ (in particular, we are regarding $N$ as an open ball of $\mathbb{C}^n$). 
It is sufficient to show each function $h_\lambda$ is holomorphic on $z$, where $h=(h_1, h_2, \cdots, h_n)$ is the decomposition by the coordinate system $\eta=(\eta_1, \eta_2, \dots, \eta_n)$. 
As $h_\lambda$ is continuous (and thus it is locally bounded), we may assume that the $L^2$-norm of $h_\lambda|_{U\setminus Z}$ is bounded by shrinking $U$ if necessary. 
Therefore it follows from Riemann's extension theorem that we can extend $h_\lambda|_{U\setminus Z}$ to a holomorphic function $\widehat{h}_\lambda\colon U\to \mathbb{C}$. 
Since $U\setminus Z\subset U$ is a dense subset, we conclude that $h_\lambda=\widehat{h}_\lambda$, which proves the lemma. 
\end{proof}

By Lemma \ref{lem:hol_tub_nbhd_or_vs_bup}, all we have to do is to investigate when $E$ admits a holomorphic tubular neighborhood in $W$. 
We apply the following Grauert's theorem to this problem: 

\begin{theorem}[{\cite[Sats 7, p. 363]{G}, see also \cite[Theorem 4.4]{CM}}]\label{thm:Grauert}
Let $M$ be a complex manifold and let $Z\subset M$ be a strongly exceptional subvariety of pure codimension $1$. 
Assume that $H^1(Z, N_{Z/M}^{-\nu})=0$ and $H^1(Z, T_Z\otimes N_{Z/M}^{-\nu})=0$ hold for every $\nu\geq 1$. 
Then $Z$ has a holomorphic tubular neighborhood in $M$. 
\end{theorem}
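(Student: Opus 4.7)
The plan is to prove the theorem via the standard two-stage strategy: (1) produce a formal isomorphism between the germ of $M$ along $Z$ and the germ of $N_{Z/M}$ along its zero section, using the cohomological vanishings to kill obstructions order by order, and (2) upgrade this formal isomorphism to a convergent holomorphic one by exploiting strong exceptionality.

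For stage (1), set $M_n := (Z, \mathcal{O}_M/\mathcal{I}_Z^{n+1})$ and $W_n := (Z, \mathcal{O}_N/\mathcal{I}_Z^{n+1})$, where $N := N_{Z/M}$ and $Z\hookrightarrow N$ is the zero section. I would construct compatible isomorphisms $\phi_n\colon M_n \xrightarrow{\sim} W_n$ restricting to the identity on $Z$, by induction on $n$. The base case $n=1$ is tautological, being just the definition of the normal bundle. At the inductive step, extending $\phi_n$ to $\phi_{n+1}$ is a deformation-theoretic problem whose obstruction lives in a cohomology group that fits, via the conormal exact sequence
\[0 \to T_Z \to T_M|_Z \to N_{Z/M} \to 0\]
tensored with $N_{Z/M}^{-(n+1)}$, between $H^1(Z, T_Z \otimes N_{Z/M}^{-(n+1)})$ and $H^1(Z, N_{Z/M}^{-n})$. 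For $n \geq 1$ both neighbours vanish by hypothesis, killing the obstruction; one then obtains a formal isomorphism $\widehat{\phi} := \varprojlim \phi_n$ of the formal completions along $Z$.

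For stage (2), strong exceptionality provides exactly the analytic input needed for convergence. By definition there is a proper holomorphic contraction $\sigma\colon U \to U'$ on a neighborhood $U$ of $Z$ that collapses $Z$ to a point and is biholomorphic off $Z$; the analogous contraction on the zero section of $N_{Z/M}$ yields the affine cone over $Z$. The formal isomorphism $\widehat{\phi}$ then descends to a formal isomorphism between the completions of the two singular germs at their vertices. Applying Artin's analytic approximation theorem (or, equivalently, the analytic algebraization machinery used in Grauert's original paper \cite{G}) converts this formal isomorphism into a genuine germ of biholomorphism, and pulling back through the two contractions produces the desired holomorphic tubular neighborhood of $Z$ in $M$.

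The genuinely delicate step is stage (2). The formal construction in stage (1) is a fairly routine sheaf-cohomology calculation once the vanishing hypotheses are in place, but passing from a formal isomorphism to a convergent analytic one requires a non-trivial analytic input — the full force of the contraction/algebraization theory for exceptional sets — and this is precisely where the assumption of \emph{strong} exceptionality (rather than mere negativity of the normal bundle) enters. Extracting this convergence, in whatever form one prefers, will be the principal obstacle; once it is in hand, the rest of the argument is mostly bookkeeping.
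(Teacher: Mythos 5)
This theorem is quoted from Grauert \cite[Satz 7]{G} (see also \cite[Theorem 4.4]{CM}) and the paper supplies no proof of its own, so the only meaningful comparison is with the cited sources. Your two-stage outline --- formal linearization of the infinitesimal neighborhoods order by order, with the obstruction living in $H^1(Z, T_M|_Z\otimes N_{Z/M}^{-(n+1)})$ and killed via the normal bundle sequence by the two stated vanishings, followed by convergence obtained from the contraction of the strongly exceptional set together with Hironaka--Rossi/Artin-type approximation --- is precisely the standard argument of those references, so your approach is correct and essentially the same as the (cited) proof.
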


\begin{proof}[Proof of Proposition \ref{prop:grauert_higher_codim}]
By the assumption, Lemma \ref{lem:hol_tub_nbhd_or_vs_bup} and Theorem \ref{thm:Grauert}, all we have to do is to show that $E\subset W$ is an exceptional subset (in the sense of Grauert). 
By \cite[Theorem 4.9, 6.12]{Lau}, \cite[Satz 8, p. 353]{G}, and \cite[Lemma 11]{HR} (see also \cite[Theorem 3.6]{CM}), it is sufficient to see the following two conditions: 
$(i)$ $N_{E/W}$ is negative, and 
$(ii)$ $\mathcal{O}_W/I_E^2\cong \mathcal{O}_{\widetilde{W}}/I_{\widetilde{E}^2}$, where $I_E\subset \mathcal{O}_W$ and $I_{\widetilde{E}}\subset \mathcal{O}_{\widetilde{W}}$ are the defining ideal sheaves of $E$ and $\widetilde{E}$, respectively. 
$(i)$ follows from $N_{E/W}^{-1}=\mathcal{O}_{{\bf P}(N_{Y/X})}(1)=\mathcal{O}_{\mathbb{P}(N_1^{-1}\oplus N_2^{-1}\oplus\cdots\oplus N_r^{-1})}(1)$ and the assumption that $N_\lambda$ is negative. 
$(ii)$ follows from the condition $H^1(E, T_E\otimes \mathcal{O}_{{\bf P}(N_{Y/X})}(1))=0$ (see \cite[Proposition 1.10, 1.11]{CM}). 
\end{proof}


\subsection{A sufficient condition for the existence of a holomorphic tubular neighborhood}

In this subsection, we show the following lemma as an application of Proposition \ref{prop:grauert_higher_codim}: 

\begin{lemma}\label{lem:a_suff_cond_for_hol_tub_nbhd}
Let $X$ be a complex manifold and let $Y$ be a compact complex submanifold. 
Assume that $N_{Y/X}$ admits a direct decomposition $N_{Y/X}=N_1\oplus N_2\oplus \cdots\oplus N_r$ into $r$ negative line bundles. 
Assume also the following three conditions:\\
$(i)$ $N_\lambda\cong N_\mu$ for each $\lambda$ and $\mu$, \\
$(ii)$ $N_\lambda^{-1}\otimes K_Y^{-1}\otimes T_Y$ is Nakano positive, and \\
$(iii)$ $N_\lambda^{-1}\otimes K_Y^{-1}$ is ample for each $\lambda$. \\
Then $Y$ admits a holomorphic tubular neighborhood in $X$. 
\end{lemma}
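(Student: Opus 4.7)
The plan is to apply Proposition \ref{prop:grauert_higher_codim}, so it is enough to verify the two vanishings $H^1(E,\mathcal{O}_{{\bf P}(N_{Y/X})}(\nu))=0$ and $H^1(E,T_E\otimes \mathcal{O}_{{\bf P}(N_{Y/X})}(\nu))=0$ for every $\nu\geq 1$, where $E={\bf P}(N_{Y/X})$. The key observation is that condition $(i)$ lets one fix abstract isomorphisms $N_\lambda\cong N_1=:N$, so that $N_{Y/X}\cong N\otimes \mathcal{O}_Y^{\oplus r}$. Since projectivization is invariant under tensoring by a line bundle, this gives an identification $E\cong Y\times \mathbb{P}^{r-1}$ under which $\mathcal{O}_{{\bf P}(N_{Y/X})}(1)=\mathrm{pr}_2^*\mathcal{O}_{\mathbb{P}^{r-1}}(1)\otimes \mathrm{pr}_1^*N^{-1}$. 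This product structure reduces every cohomology on $E$ to factors via Künneth.

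For the first vanishing, Künneth together with $H^q(\mathbb{P}^{r-1},\mathcal{O}(\nu))=0$ for $q\geq 1$ and $\nu\geq 0$ leaves only the term $H^1(Y,N^{-\nu})\otimes H^0(\mathbb{P}^{r-1},\mathcal{O}(\nu))$. Writing $N^{-\nu}=K_Y\otimes (K_Y^{-1}\otimes N^{-1})\otimes N^{-(\nu-1)}$ shows that we are twisting $K_Y$ by an ample line bundle (by $(iii)$ together with the fact that $N^{-1}$ is ample since $N$ is negative), so Kodaira vanishing yields $H^1(Y,N^{-\nu})=0$ for every $\nu\geq 1$.

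For the second vanishing, I would split $T_E$ via the relative tangent sequence $0\to T_{E/Y}\to T_E\to \pi^*T_Y\to 0$. The piece $\pi^*T_Y\otimes \mathcal{O}_E(\nu)=\mathrm{pr}_1^*(T_Y\otimes N^{-\nu})\otimes \mathrm{pr}_2^*\mathcal{O}_{\mathbb{P}^{r-1}}(\nu)$ contributes (after Künneth and the same $\mathbb{P}^{r-1}$-vanishings as above) only $H^1(Y,T_Y\otimes N^{-\nu})$. Rewriting this as $H^1\bigl(Y,K_Y\otimes \bigl((K_Y^{-1}\otimes T_Y\otimes N^{-1})\otimes N^{-(\nu-1)}\bigr)\bigr)$, the inner vector bundle is Nakano positive by $(ii)$, and tensoring a Nakano positive bundle with an ample line bundle preserves Nakano positivity, so the Nakano vanishing theorem kills this group. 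The remaining piece $T_{E/Y}\otimes \mathcal{O}_E(\nu)=\mathrm{pr}_2^*(T_{\mathbb{P}^{r-1}}(\nu))\otimes \mathrm{pr}_1^*N^{-\nu}$ contributes two Künneth summands; the $H^1(Y,N^{-\nu})$ factor was already shown to vanish, and the remaining $H^1(\mathbb{P}^{r-1},T_{\mathbb{P}^{r-1}}(\nu))$ is a standard consequence of the Euler sequence, vanishing for all $\nu\geq 1$.

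The main technical point is to keep the twist $\pi^*N^{-1}$ in the formula for $\mathcal{O}_E(1)$ visible throughout the Künneth bookkeeping, so that every Künneth summand not automatically killed on the $\mathbb{P}^{r-1}$-side is a cohomology group on $Y$ of $T_Y$ or $\mathcal{O}_Y$ twisted by a negative power of $N$. Hypotheses $(ii)$ and $(iii)$ are designed precisely to handle the $\nu=1$ case, and the $\nu\geq 2$ cases then follow automatically because further tensoring by the ample line bundle $N^{-(\nu-1)}$ preserves both ampleness and Nakano positivity of the relevant twists.
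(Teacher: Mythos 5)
Your proposal is correct, and it follows the same overall strategy as the paper (reduce via Proposition \ref{prop:grauert_higher_codim} to the two $H^1$-vanishings on $E={\bf P}(N_{Y/X})$, and exploit the splitting $E\cong Y\times\mathbb{P}^{r-1}$ with $\mathcal{O}_E(1)\otimes p^*N$ trivial along $Y$ that condition $(i)$ provides), but the execution of the vanishing step is genuinely different. The paper keeps everything on $E$: it feeds $T_{E/Y}$ into the relative Euler sequence to reduce $H^1(E,T_E\otimes\mathcal{O}_E(\nu))$ to the four groups $H^1(E,\mathcal{O}_E(\nu))$, $H^2(E,\mathcal{O}_E(\nu))$, $H^1(E,\mathcal{O}_E(\nu+1)\otimes p^*N_\lambda)$ and $H^1(E,\mathcal{O}_E(\nu)\otimes p^*T_Y)$, computes $K_E^{-1}\cong p^*(N^r\otimes K_Y^{-1})\otimes\mathcal{O}_E(r)$, and then exhibits explicit Nakano positive metrics (Fubini--Study on ${\rm Pr}_1^*\mathcal{O}_{\mathbb{P}}(1)=\mathcal{O}_E(1)\otimes p^*N$ tensored with metrics coming from $(ii)$ and $(iii)$) on the three bundles $K_E^{-1}\otimes(\cdots)$, so that Nakano vanishing is applied on the total space $E$. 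You instead push everything down to the factors by K\"unneth, so that the only groups that survive are $H^1(Y,N^{-\nu})$, $H^1(Y,T_Y\otimes N^{-\nu})$ and $H^1(\mathbb{P}^{r-1},T_{\mathbb{P}^{r-1}}(\nu))$, killed respectively by Kodaira vanishing via $(iii)$, Nakano vanishing on $Y$ via $(ii)$, and the Euler sequence on $\mathbb{P}^{r-1}$; this avoids the $H^2$ term on $E$ entirely and makes it more transparent exactly which hypothesis kills which group (and it gets the fiber dimension right: the fiber is $\mathbb{P}^{r-1}$, not $\mathbb{P}^r$ as the paper's proof writes). The one point you state without proof --- that tensoring a Nakano positive bundle with an ample line bundle preserves Nakano positivity --- is standard (choose a positively curved metric on the line bundle; the curvatures add and the line-bundle term is Nakano positive), so there is no gap.
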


Note that, when $T_Y$ is holomorphically trivial, conditions $(ii)$ and $(iii)$ are automatically satisfied. 

\begin{proof}
By Proposition \ref{prop:grauert_higher_codim}, it is sufficient to show $H^1(E, \mathcal{O}_{{\bf P}(N_{Y/X})}(\nu))=0$ and $H^1(E, T_E\otimes \mathcal{O}_{{\bf P}(N_{Y/X})}(\nu))=0$ for every $\nu\geq 1$, where $E:={\bf P}(N_{Y/X})$. 
Note that it follows from condition $(i)$ that $E\cong Y\times {\bf P}^r$. 
By the relative Euler sequence 
\[
0\to \mathbb{I}_{E}\to p|_E^*N_{Y/X}\otimes\mathcal{O}_{{\bf P}(N_{Y/X})}(1)\to T_{E/Y}\to 0, 
\]
it turns out that it is sufficient to show the following four vanishing assertions: \linebreak
$H^1(E, \mathcal{O}_{{\bf P}(N_{Y/X})}(\nu))=0$, 
$H^2(E, \mathcal{O}_{{\bf P}(N_{Y/X})}(\nu))=0$, 
$H^1(E, \mathcal{O}_{{\bf P}(N_{Y/X})}(\nu+1)\otimes p|_E^*N_\lambda)=0$, and 
$H^1(E, \mathcal{O}_{{\bf P}(N_{Y/X})}(\nu)\otimes p|_E^*T_Y)=0$ for each $\nu\geq 1$. 
By Nakano's vanishing theorem, the problem is reduced to show Nakano positivity for the following three vector bundles: 
$K_E^{-1}\otimes\mathcal{O}_{{\bf P}(N_{Y/X})}(1)$, 
$K_E^{-1}\otimes\mathcal{O}_{{\bf P}(N_{Y/X})}(2)\otimes p|_E^*N_\lambda$, and 
$K_E^{-1}\otimes\mathcal{O}_{{\bf P}(N_{Y/X})}(1)\otimes p|_E^*T_Y$. 
As 
\[
K_E^{-1}\cong p|_E^*(N^r\otimes K_Y^{-1})\otimes \mathcal{O}_{{\bf P}(N_{Y/X})}(r)
\]
holds ($N:=N_1\cong N_2\cong\dots\cong N_r$), these three bundles are written as
$\mathcal{O}_{{\bf P}(N_{Y/X})}(r+1)\otimes p|_E^*(N^r\otimes K_Y^{-1})$, 
$\mathcal{O}_{{\bf P}(N_{Y/X})}(r+2)\otimes p|_E^*(N^{r+1}\otimes K_Y^{-1})$, and 
$\mathcal{O}_{{\bf P}(N_{Y/X})}(r+1)\otimes p|_E^*(T_Y\otimes N^r\otimes K_Y^{-1})$, respectively.
In what follows, we show the Nakano positivity for these three bundles. 

First let us note that $\mathcal{O}_{{\bf P}(N_{Y/X})}(1)\otimes p|_E^*N={\rm Pr}_1^*\mathcal{O}_{{\bf P}^r}(1)$, where ${\rm Pr}_1$ is the first projection $E\cong {\bf P}^r\times Y\to {\bf P}^r$. 
Let us denote by $h$ the metric on this line bundle which is the pull-back of the Fubini-Study metric by ${\rm Pr}_1$. 
By tensoring $h$ and a metric on $N^{-1}\otimes K_Y^{-1}$ with positive curvature, we have the positivity for the bundles $\mathcal{O}_{{\bf P}(N_{Y/X})}(r+1)\otimes p|_E^*(N^r\otimes K_Y^{-1})={\rm Pr}_1^*\mathcal{O}_{{\bf P}^r}(r+1)\otimes p|_E^*(N^{-1}\otimes K_Y^{-1})$ and $\mathcal{O}_{{\bf P}(N_{Y/X})}(r+2)\otimes p|_E^*(N^{r+1}\otimes K_Y^{-1})={\rm Pr}_1^*\mathcal{O}_{{\bf P}^r}(r+2)\otimes p|_E^*(N^{-1}\otimes K_Y^{-1})$. 

Finally we show the Nakano positivity for $F:=\mathcal{O}_{{\bf P}(N_{Y/X})}(r+1)\otimes p|_E^*(T_Y\otimes N^r\otimes K_Y^{-1})=\mathcal{O}_{{\bf P}(\mathbb{I}_Y)}(r+1)\otimes p|_E^*(T_Y\otimes N^{-1}\otimes K_Y^{-1})$. 
Take a metric $h'$ on $T_Y\otimes N^{-1}\otimes K_Y^{-1}$ with Nakano positive curvature.  
Then $h^{r+1}\otimes p|_E^*h'$ is a metric on $F$, with curvature
$(r+1)\Theta_h\otimes {\rm Id}_F+p|_E^*\Theta_{h'}$, which is easily seen to be Nakano positive. 
\end{proof}

\subsection{Proof of Theorem \ref{cor:ab}}

By Lemma \ref{lem:a_suff_cond_for_hol_tub_nbhd}, $Y$ admits a holomorphic tubular neighborhood in $X$. 

By Theorem \ref{thm:main}, there exists a metric $h_{{\rm min}, L}$ with minimal singularities whose local weight $\varphi_{{\rm min}, L}$ is written in the form 
\[
\varphi_{{\rm min}, L}(z, y)=\log \max_{\alpha \in \Box_L} |z^\alpha|^2 e^{(\varphi_\alpha)_e (y)}+ O(1). 
\]
By choosing metrics $e^{\varphi_\alpha (y)}$ as in \cite[\S 2.2]{K1}, 
we obtain that $(\varphi_\alpha)_e (y)=\varphi_\alpha (y)$ holds and $\varphi_\alpha(y)$ depends continuously on $(y, \alpha)$, which proves the assertion. 
\qed


\section{Examples}\label{section:eg}

\subsection{Nakayama's example}

Nakayama's example $(X, L, Y)$ is the example which admits no Zariski decomposition even after modifications \cite[IV, \S 2.6]{N} (see also \cite[\S1]{K1}). 
In this example, the manifold $X$ is a total space of the projective {space} bundle $\pi\colon X:=\mathbb{P}(M_1\oplus M_2\oplus M_3)\to Y$ over an abelian surface $Y$, where $M_1$ and $M_2$ are ample line bundles on $Y$ and $M_3$ is a line bundle on $Y$. 
The line bundle $L$ is the inverse of the tautological line bundle: i.e. $L:=\mathcal{O}_{\mathbb{P}(M_1\oplus M_2\oplus M_3)}(1)$. 
Then the stable base locus of $L$ is the subset $\mathbb{P}(M_3)\subset X$, which we are regarding as $Y$ here. 
As it is clearly observed, this example $(X, L, Y)$ is a special case of those in \S 3. Therefore we can apply Theorem \ref{thm:P_bdl_main} to this example. 
In particular, by using the metrics as in the proof of Theorem \ref{cor:ab}, we can reprove the main result in \cite{K1}. 

\subsection{Zariski's example and its higher (co-)dimensional analogues and proof of Theorem \ref{cor:higher_Zariski}}\label{section:zariski}

In \cite[\S 4.2]{K2}, the second-named author applied its main result (=Theorem \ref{cor:ab}, \ref{thm:main} of this paper for $r=1$) to Zariski's and Mumford's example $(X, L, Y)$, in which $L$ is nef and big however not semi-ample, and showed the semi-positivity of $L$ (i.e. the existence of a $C^\infty$ Hermitian metric on $L$ with semi-positive curvature). 
Here we construct an example which can be regarded as a higher-codimensional analogue of Zariski's example and apply Theorem \ref{cor:ab} to it. In what follows, we only consider the case of $r=2$ for simplicity. 

Take two general quadric surfaces $Q_1$ and $Q_2$ in $\mathbb{P}^3$. 
Then we may assume that the intersection $C:=Q_1\cap Q_2$ is a smooth elliptic curve and $Q_1$ and $Q_2$ intersects transversally along $C$. 
Fix $N$ points $p_1, p_2, \dots, p_N$ in $C$. 
Denote by $\pi\colon X:={\rm Bl}_{\{p_1, p_2, \dots, p_N\}}\mathbb{P}^3\to \mathbb{P}^3$ the blow-up of 
$\mathbb{P}^3$ at these $N$ points, 
by $Y$ the strict transform of $C$, 
by $D_1$ and $D_2$ the the strict transform of $Q_1$ and $Q_2$, respectively, 
by $E_\lambda$ the exceptional divisor $\pi^{-1}(p_\lambda)$ for each $\lambda$, 
by $E$ the divisor $\textstyle\sum_{\lambda=1}^NE_\lambda$, 
and by $H$ the pull-back $\pi^*\mathcal{O}_{\mathbb{P}^3}(1)$. 
Note that $D_\lambda\in |2H-E|$. 

Let us consider the line bundle $L:=\mathcal{O}_X(H+D_1)=\mathcal{O}_X(3H-E)$ on $X$. 
As $H$ is big and $D_1$ is effective, $L$ is also big. 
It is also observed that ${\rm Bs}\,|L|\subset Y$ holds, since $H$ is base point free and ${\rm Bs}\,|H|\subset Y$ by construction. 
As a simple computation shows that the intersection number $(L. Y)$ is equal to $12-N$, 
we conclude that $L$ is nef if and only if $12\geq N$. 

First let us consider the case of $N=12$. 
In this case, we may assume that $L|_Y$ is a general (and thus non-torsion) element of ${\rm Pic}^0\,(Y)$ by choosing $p_1, p_2, \dots, p_{12}$ generically. 
Then it is easily observed that ${\bf B}(L)=Y$ holds, and therefore that $L$ is not semi-ample, hoverer $L$ is nef and big. 
In this sense, we can regard this example $(X, Y, L)$ as an analogue of Zariski's example with $r=2$. 
As $D_1$ and $D_2$ intersects transversally along $Y$, we obtain the decomposition 
\[
N_{Y/X}=N_{D_1/X}|_Y\oplus N_{D_2/X}|_Y=\mathcal{O}_X(D_1)|_{D_1}|_Y\oplus \mathcal{O}_X(D_2)|_{D_2}|_Y
=\mathcal{O}_X(D_1)|_Y\oplus \mathcal{O}_X(D_2)|_Y
\]
By denoting $N_\lambda:=\mathcal{O}_X(D_\lambda)|_Y$ for each $\lambda=1, 2$, it holds that $N_1\cong N_2$, 
$(D_\lambda. Y)=2(H. Y)-(E. Y)=8-12<0$, and ${\rm deg}_YL|_Y\otimes N_\lambda^{-1}=0-(8-12)>0$. 
Therefore we can apply Theorem \ref{cor:ab}, Corollary \ref{cor:restr_min}, and also Lemma \ref{lem:a_suff_cond_for_hol_tub_nbhd} to our $(X, L, Y)$. 
By Corollary \ref{cor:restr_min}, we have that $h_{{\rm min}, L}|_Y$ is bounded, where $h_{{\rm min}, L}$ is a metric of $L$ minimal singularities (here we use that fact that $L|_Y$ admits a $C^\infty$ Hermitian metric with zero curvature, since $L|_Y$ is a flat line bundle). 
Therefore we can conclude that $h_{{\rm min}, L}$ is bounded. 
Note that we can moreover show that the semi-positivity of $L$ (i.e. that we can choose $h_{{\rm min}, L}$ as a $C^\infty$ Hermitian metric) by applying Lemma \ref{lem:a_suff_cond_for_hol_tub_nbhd} and use the ``regularized maximum construction'' technique as in \cite[Corollary 3.4]{K2}. 

Next let us consider the case of $N>12$. 
In this case, $L$ is not nef . 
By the argument as above, we also have ${\bf B}(L)=Y$, ${\rm deg}\,N_\lambda=8-N<0$, and ${\rm deg}\,(L|_Y\otimes N_\lambda^{-1})=(12-N)-(8-N)=4>0$. Thus we can apply Theorem \ref{cor:ab} to $(X, Y, L)$ also in this case. 
As the computation shows that
\begin{eqnarray}
\Box_L
&=&\left\{\alpha=(\alpha_1, \alpha_2)\in \mathbb{R}^2_{\geq 0} \left| \frac{N-12}{N-8}\leq|\alpha| \leq 1\right.\right\}, \nonumber
\end{eqnarray}
it follows from Theorem \ref{cor:ab} that the local weight function $\varphi_{{\rm min}, L}$ of a metric $h_{{\rm min}, L}$ with minimal singularities can be written as 
\[
\varphi_{{\rm min}, L}(z, y)=\log \max_{\alpha \in \Box_L}\prod_{\lambda=1}^r |z_\lambda|^{2\alpha_\lambda} +O(1)
=\frac{N-12}{N-8}\cdot \log (|z_1|^2+|z_2|^2) +O(1)
\]
on a neighborhood of any point of $Y$, where $y$ is a coordinate of $Y$ and $z=(z_1, z_2, \dots, z_r)$ is a system of local defining functions of $Y$. 
In particular, in this case, $\varphi_{{\rm min}, L}$ has analytic singularities along $Y$. 

Note that similar example can be constructed in general dimension by considering some points blow-up of a del Pezzo manifold of degree $1$ (see \cite{F} for example. For the choice of the counterpart of the divisors $D_\nu$'s above, see \cite[\S 6.3]{K3}). 

\subsection{An example in {\cite{BEGZ}}}\label{section:BEGZ}

The above two examples satisfies Condition \ref{cond:main} $(ii)$ and the condition that $Y$ admits a holomorphic tubular neighborhood.
On the other hand, the example $(X, Y, L)$ in \cite[Example 5.4]{BEGZ} does not satisfy these conditions. 
In \cite{BEGZ}'s example, a metric of $L$ with minimal singularities is unbounded and actually has singularities along $Y$ (i.e.\ local weight function equals to $-\infty$ on $Y$), however the Lelong number of the local weight is $0$ for every point in $X$ (see also \cite[Example 4.2]{K2}). 
In particular, the conclusion of Theorem \ref{thm:main} does not hold in this example. 


\ \\
\vskip3mm
{\bf Acknowledgment. }
The authors would like to thank Dr.\ Tatsuya Miura for his helpful suggestions on the change of variables in the proof of Lemma \ref{lem:integration}.
The first author is supported by Program for Leading Graduate Schools, MEXT, Japan.
He is also supported by the Grant-in-Aid for Scientific Research (KAKENHI No.15J08115).
The second author is supported by the Grant-in-Aid for Scientific Research (KAKENHI No.28-4196).




\begin{thebibliography}{99}
 \bibitem[B]{B} \textsc{S. Boucksom}, Divisorial Zariski decompositions on compact complex manifolds, Ann. Sci. \'Ecole Norm. Sup. (4) {\bf 37}(1) (2004), 45--76. 
 \bibitem[BEGZ]{BEGZ} \textsc{S. Boucksom, P. Eyssidieux, V. Guedj, A. Zeriahi}, Monge-Amp{\`e}re equations in big cohomology classes. Acta Math. {\bf 205} (2010), 199--262. 
\bibitem[CM]{CM} \textsc{C. Camacho, H, Movasati}, Neighborhoods of Analytic Varieties, Monograf\'\i as del Instituto de Matem\'atica y Ciencias Afines, 35. Instituto de Matem\'atica y Cienc\'\i as Afines, IMCA, Lima; Pontificia Universidad Cat\'olica del Per\'u, Lima, 2003.
 \bibitem[D1]{D} \textsc{J.-P. Demailly}, Complex Analytic and Differential Geometry, monograph, 2012, available at \url{http://www-fourier.ujf-grenoble.fr/~demailly}. 
 \bibitem[D2]{D2} \textsc{J.-P. Demailly}, Analytic methods in algebraic
 geometry, Surveys of Modern Mathematics, {\bf 1}. International Press,
 Somerville, MA, 2012.
\bibitem[DPS]{DPS01} \textsc{J.-P. Demailly, T. Peternell, M. Schneider}, Pseudo-effective line bundles on compact K\"ahler manifolds, Internat. J. Math. {\bf 12} (2001), 689--741.
 \bibitem[F]{F} \textsc{T. Fujita}, Classification theories of polarized varieties. London Mathematical Society Lecture Note
Series, {\bf 155}, Cambridge University Press, Cambridge (1990). 
\bibitem[G]{G} \textsc{H. Grauert}, \"{U}ber Modifikationen und exzeptionelle analytische Mengen, Math. Ann., {\bf 146} (1962), 331--368.
\bibitem[HR]{HR} \textsc{H. Hironaka, H. Rossi, H}, On the equivalence of imbeddings of exceptional complex spaces, Math. Ann. {\bf 156} (1964) 313--333.
\bibitem[K1]{K1} \textsc{T. Koike}, Minimal singular metrics of a line bundle admitting no Zariski-decomposition, Tohoku Math. J. (2) Volume 67, Number 2 (2015), 297--321.
\bibitem[K2]{K2} \textsc{T. Koike}, On minimal singular metrics of certain class of line bundles whose section ring is not finitely generated, Ann. Inst. Fourier (Grenoble) Volume 65, Number 5 (2015), 1953--1967. 
\bibitem[K3]{K3} \textsc{T. Koike}, Higher codimensional Ueda theory for a compact submanifold with unitary flat normal bundle, arXiv:1606.01837. 
\bibitem[Lau]{Lau} \textsc{H. B. Laufer}, Normal two-dimensional singularities, Ann. Math. Stud., {\it 71}. Princeton University Press, Princeton, N.J.; University of Tokyo Press, Tokyo, 1971. 
 \bibitem[Laz]{Laz} \textsc{R. Lazarsfeld}, Positivity in algebraic geometry, Ergebnisse der Mathematik
und ihrer Grenzgebiete. 3. Folge. A Series of Modern Surveys
in Mathematics [Results in Mathematics and Related Areas.
 \bibitem[N]{N} \textsc{N. Nakayama}, Zariski decomposition and abundance, MSJ Mem. 14, Mathematical Society of Japan, Tokyo, 2004.
3rd Series. A Series of Modern Surveys in Mathematics], {\bf 48}. Springer-Verlag, Berlin, 2004. 
 \bibitem[OLBC]{OLBC} \textsc{F. W. Olver, D. M. Lozier, R. F. Boisvert, and C. W. Clark, eds.}, Digital Library of Mathematical Functions: Online Companion to NIST
Handbook of Mathematical Functions (CUP). National Insitute of Standards and Technology, 2010. http://dlmf.nist.gov. 
\bibitem[R]{R} \textsc{H. Rossi}, Strongly pseudoconvex manifolds, Lectures in Modern Analysis and Applications I, Lecture Notes in Mathematics {\bf 103}, 10--29 (1969). 
\end{thebibliography}
\end{document}